\documentclass[a4paper,11pt,openright,twoside]{amsart}
\usepackage{geometry}
\geometry{a4paper,tmargin=2.5cm,bmargin=2.5cm,
	lmargin=3cm,rmargin=2cm}
\usepackage{etex}%
\usepackage[utf8]{inputenc}
\usepackage[english]{babel}
\usepackage[T1]{fontenc}
\usepackage{acronym}%
\usepackage{epstopdf}%
\usepackage{epigraph}%
\usepackage{indentfirst}%
\usepackage{amsmath,amssymb, amsthm, amsfonts}
\usepackage{braket}%
\usepackage{empheq}%
\usepackage{hyperref}
\usepackage{mathtools}
\usepackage{dcpic}%
\usepackage[all,cmtip,2cell]{xy} 
\UseAllTwocells
\input{diagxy}
\usepackage{wrapfig}%
\usepackage{subfig,caption}%
\usepackage{stmaryrd}%
\usepackage{textcomp}%
\usepackage{mathrsfs}%
\usepackage{lmodern}
\usepackage{tikz}
\usepackage{tikz-cd}
\usetikzlibrary{graphs}
\usetikzlibrary{quotes}
\usetikzlibrary{matrix}
\usepackage[all]{xy}

\newcommand\nbd\nobreakdash
\newcommand{\Cat}{{\mathcal{C}\mspace{-2.mu}\it{at}}}
\newcommand{\nCat}[1]{{#1}\hbox{\protect\nbd-}\kern1pt\Cat}	
\newcommand{\ulthreecell}[3][0.5]{\ar@{}[#2] \ar@3?(#1)+/dr  0.2cm/;?(#1)+/ul 0.2cm/_{#3}}
\newcommand{\parallelsum}{\mathbin{\!/\mkern-5mu/\!}}

\newcommand{\gpd}{{\mathcal{G}\mspace{-2.mu}\it{pd}}}
\newcommand{\ngpd}[1]{{#1}\hbox{\protect\nbd-}\kern1pt\gpd}
	
\newcommand{\Twocong}[2][0.5]{\ar@{}[#2] \save ?(#1)*{\cong}\restore}
\newcommand{\Twoeq}[2][0.5]{\ar@{}[#2] \save ?(#1)*{=}\restore}
\newcommand{\Ltwocell}[3][0.5]{\ar@{}[#2] \ar@{=>}?(#1)+/r 0.2cm/;?(#1)+/l 0.2cm/^{#3}}
\newcommand{\Rtwocell}[3][0.5]{\ar@{}[#2] \ar@{=>}?(#1)+/l 0.2cm/;?(#1)+/r 0.2cm/^{#3}}
\newcommand{\Utwocell}[3][0.5]{\ar@{}[#2] \ar@{=>}?(#1)+/d  0.2cm/;?(#1)+/u 0.2cm/_{#3}}
\newcommand{\Dtwocell}[3][0.5]{\ar@{}[#2] \ar@{=>}?(#1)+/u  0.2cm/;?(#1)+/d 0.2cm/^{#3}}
\newcommand{\Ultwocell}[3][0.5]{\ar@{}[#2] \ar@{=>}?(#1)+/dr  0.2cm/;?(#1)+/ul 0.2cm/^{#3}}
\newcommand{\Urtwocell}[3][0.5]{\ar@{}[#2] \ar@{=>}?(#1)+/dl  0.2cm/;?(#1)+/ur 0.2cm/^{#3}}
\newcommand{\Dltwocell}[3][0.5]{\ar@{}[#2] \ar@{=>}?(#1)+/ur  0.2cm/;?(#1)+/dl 0.2cm/^{#3}}
\newcommand{\Drtwocell}[3][0.5]{\ar@{}[#2] \ar@{=>}?(#1)+/ul  0.2cm/;?(#1)+/dr 0.2cm/^{#3}}
\newcommand{\Ulthreecell}[3][0.5]{\ar@{}[#2] \ar@3?(#1)+/dr  0.2cm/;?(#1)+/ul 0.2cm/_{#3}}

\newcommand{\D}{\mathcal{D}}

\newcommand{\G}{\mathbb{G}}

\newcommand{\C}{\mathscr{C}}

\newcommand{\Tn}{\Theta^{\leq n}_0}
\newcommand{\Mod}{\textsc{Mod}}

\newcommand{\plus}[1]{\mathop{\amalg}\limits_{#1}}
\renewcommand{\epsilon}{\varepsilon}
\renewcommand{\theta}{\vartheta}
\renewcommand{\rho}{\varrho}
\renewcommand{\phi}{\varphi}

\newcounter{ctr} \numberwithin{ctr}{section}

\theoremstyle{definition}

\theoremstyle{definition}

\newtheorem{nota}[ctr]{Notation}
\newtheorem{constr}[ctr]{Construction}
\newtheorem{defi}[ctr]{Definition}

\newtheorem{rmk}[ctr]{Remark}
\newtheorem{ex}[ctr]{Example}

\theoremstyle{plain}
\newtheorem*{Mthm}{Main theorem}
\newtheorem{thm}[ctr]{Theorem}

\newtheorem{prop}[ctr]{Proposition}
\newtheorem{lemma}[ctr]{Lemma}
\newtheorem{cor}[ctr]{Corollary}
\newtheorem*{Cor}{Corollary}

\DeclareMathOperator{\colim}{colim}

\DeclareMathOperator{\height}{ht}

\newcommand{\Ccal}{\mathcal{C}}
\newcommand{\Dcal}{\mathcal{D}}

\begin{document}
	\title{On the homotopy hypothesis in dimension 3}
	\author{Simon Henry}
	\address{Masaryk university,  Mathematics department \\ Kotl\'a\v{r}sk\'a 2 \\ 611 37 Brno \\ Czech republic}
	\email{henrys@math.muni.cz}
	\urladdr{http://www.normalesup.org/~henry/}
	\author{Edoardo Lanari}
	\address{Institute of Mathematics CAS \\ \v{Z}itn\'a 25 \\115 67   Praha 1\\ Czech Republic}
	\email{edoardo.lanari.el@gmail.com}
	\urladdr{https://sites.google.com/view/edoardo-lanari}
	\subjclass[2010]{18G30, 18G55, 55U10, 55U35}
	
	\maketitle

\begin{abstract}
We show that if the canonical left semi-model structure on the category of Grothendieck $n$-groupoids exists, then it satisfies the homotopy hypothesis, i.e. the associated \((\infty,1)\)-category is equivalent to that of homotopy $n$-types, thus generalizing a result of the first named author. As a corollary of the second named author's proof of the existence of the canonical left semi-model structure for Grothendieck 3-groupoids, we obtain a proof of the homotopy hypothesis for Grothendieck 3-groupoids.
\end{abstract}

\tableofcontents

\section*{Introduction}
The generalized homotopy hypothesis, formulated in the 80's by Alexander Grothendieck, roughly states that the homotopy theories of weak \(n\)-groupoids and of homotopy \(n\)-types are equivalent. In particular, he conjectured this to be true for a particular kind of algebraic model of higher groupoids, introduced in \cite{Gr}. Later, Maltsiniotis gave a more compact definition along the lines of Grothendieck's one (see \cite{MA}), and he adapted it also to the case of (weak) \(\infty\)-categories.

Grothendieck \(\infty\)-groupoids are defined as \(\mathbf{Set}\)-valued models of a specific class of \emph{globular} theories, called coherators. These have the property of being contractible and cellular: the former provides all the appropriate operations that ought to exist for a weak \(\infty\)-groupoid, while the latter ensures that all the ``relations'' among the various operations are encoded by higher homotopies rather than on-the-nose equalities.

In \cite{EL} and \cite{EL2}, the second author introduces the notion of Grothendieck \(n\)-groupoid and addresses the problem of endowing the corresponding categories with a ``canonical'' left semi-model structure (see \cite{Ba} for the definition of these structures), i.e. the left semi-model structure whose equivalences are the maps that induce bijections on (suitably defined) homotopy groups, and the cofibration are the maps obtained by freely adding arrows with specified source and target, or identifying parallel cells of maximal dimension. 

In \cite{EL2}, in particular, it is shown that the validity of a certain technical result, called the \emph{pushout lemma}, constitutes a necessary and sufficient condition for the existence of the canonical model structure (see proposition \ref{model str} for the precise statement), and sufficient conditions for its validity are given, in terms of a (weaker version of a) path object construction. A candidate for the underlying globular set of this putative path object is given in \cite{EL}, and its full construction is achieved for \(n=3\) in \cite{EL2}, where the category of Grothendieck 3-groupoids gets endowed with a left semi-model structure.


In \cite{Hen}, the first author independently developed tools that enables him to show that the homotopy hypothesis for Grothendieck $\infty$-groupoids is valid provided the same technical result, the \emph{pushout lemma}, holds true. The main result of the present paper, theorem \ref{main thm}, is an extension of this to the case of $n$-groupoids for $n < \infty$ :

\begin{Mthm}
Assuming the canonical left semi-model structure on \(\ngpd{n}\) exists, then it is equivalent to the model category for homotopy $n$-types.
\end{Mthm}

We recall in Proposition \ref{model str} that the validity of the pushout lemma (in dimension $n$) implies (actually, it is equivalent to) the existence of the canonical left semi-model structure. While the existence of the canonical left semi-model structure might seem a strong assumption, the pushout lemma appears to be a very plausible conjecture.

By putting together the main theorem with the above mentioned ones in \cite{EL2}, we obtain the following corollary.

\begin{Cor}
Grothendieck 3-groupoids satisfy the generalized homotopy hypothesis and thus provide an algebraic model for homotopy \(3\)-types.
\end{Cor}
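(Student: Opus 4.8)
The plan is to derive this corollary by feeding the existence result of \cite{EL2} into the main theorem, so that the only task is to check that the hypothesis of theorem \ref{main thm} is satisfied when $n = 3$. First I would recall that in \cite{EL2} the candidate path object for Grothendieck $3$-groupoids, whose underlying globular set was proposed already in \cite{EL}, is constructed in full, and that this construction suffices to establish the pushout lemma in dimension $3$. By Proposition \ref{model str}, the validity of the pushout lemma in dimension $n$ is equivalent to the existence of the canonical left semi-model structure on \ngpd{n}; specialising to $n = 3$ therefore endows \ngpd{3} with its canonical left semi-model structure.

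With the hypothesis in hand, I would then apply the main theorem with $n = 3$. Since the canonical left semi-model structure on \ngpd{3} exists, theorem \ref{main thm} asserts that it is equivalent to the model category of homotopy $3$-types, i.e. that the two present the same $(\infty,1)$-category. This is exactly the statement that Grothendieck $3$-groupoids constitute an algebraic model for homotopy $3$-types, which is the content of the corollary.

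Because all the real work is carried out in the two ingredients being combined --- the main theorem on one side and the path-object construction of \cite{EL2} on the other --- the argument itself presents no genuine obstacle. The only point demanding attention is a bookkeeping one: one must confirm that the left semi-model structure whose existence \cite{EL2} secures is literally the ``canonical'' one (weak equivalences detecting the homotopy groups, cofibrations generated by the prescribed cell attachments) to which theorem \ref{main thm} applies. Given that both sources are organised around the same notion of canonical structure, this identification is immediate, and the corollary then follows formally.
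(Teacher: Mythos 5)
Your proposal is correct and follows exactly the route the paper intends: the path-object construction of \cite{EL2} yields the pushout lemma in dimension $3$, hence (via Proposition \ref{model str}) the canonical left semi-model structure on $\ngpd{3}$, and Theorem \ref{main thm} (together with the Quillen equivalence of Theorem \ref{thm:Quillen_equiv_cosk_truncated} passing between $3$-groupoids and $4$-coskeletal groupoids) then gives the equivalence with homotopy $3$-types. The bookkeeping point you flag --- that the structure produced in \cite{EL2} is literally the canonical one --- is indeed the only thing to verify, and it holds as you say.
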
 
A similar result in the stricter case of Gray-groupoids was proven by Lack in \cite{La2}.

One of the main difficulties in adapting the idea of \cite{Hen} to the finite dimensional case is that the definition of $n$-groupoids involve equations in the highest dimension, while the argument in \cite{Hen} relies in an essential way on the absence of equations. To solve this problem, we show in Section \ref{sec:coskeletal_truncated} that $n$-groupoids can be replaced by a notion of coskeletal $(n+1)$-groupoids, which involve no equations. More precisely, we show that their homotopy theories are equivalent, even without assuming the existence of the left semi-model structure, and we show that the existence of the left semi-model structure on $n$-groupoids is equivalent to the existence of an analogue of the canonical left semi-model structure for $(n+1)$-coskeletal groupoids.

In Section \ref{sec:cylinder_cat} we revise the tools developed in \cite{Hen}, and in Section \ref{sec:mainTh} we show how they are used to prove (assuming the existence of the left semi-model structure) the equivalence between $(n+1)$-coskeletal groupoids and homotopy $n$-types, hence obtaining our main theorem.

A crucial step for this part is the identification of the universal property of the \((\infty,1)\)-category of homotopy \(n\)-types. This can be roughly formulated as follows: the \((\infty,1)\)-category of homotopy \(n\)-types is the free cocomplete \((\infty,1)\)-category on an \(n\)-co-truncated object.

\subsection*{Acknowledgements}
The first author was supported by the Operational Programme Research, Development and Education Project “Postdoc@MUNI” (No. CZ.02.2.69/0.0/0.0/16 027/0008360).

The second author gratefully acknowledges the support of Praemium Academiae of M. Markl and RVO:67985840.
\section{Background}
\label{sec:Background}

This section comprises all the preliminary definitions and constructions needed in the context of globular (weak) \(\infty\)-groupoids. For a more detailed account, see \cite{AR1} and \cite{MA}. 
\begin{defi}
	Let $\G$ be the category obtained as the quotient of the free category on the graph
	\[\begin{tikzcd}
0 \ar[r, shift left=1ex,"\sigma_0"] \ar[r, shift right=1ex,"\tau_0"{swap}] & 1 \ar[r, shift left=1ex,"\sigma_1"] \ar[r, shift right=1ex,"\tau_1"{swap}] & \ldots n \ar[r, shift left=1ex,"\sigma_n"] \ar[r, shift right=1ex,"\tau_n"{swap}] & n+1\ar[r, shift left=1ex,"\sigma_{n+1}"] \ar[r, shift right=1ex,"\tau_{n+1}"{swap}] &\ldots
	\end{tikzcd}\]
	by the set of relations $\sigma_k \circ \sigma_{k-1}=\tau_k \circ \sigma_{k-1}$, $\sigma_k \circ \tau_{k-1}=\tau_k \circ \tau_{k-1}$ for $k\geq 1$.
	
	Given integers $j>i$, define $\sigma^j_i=\sigma_{j-1}\circ \sigma^{j-1}_i$, where $\sigma^{i+1}_i=\sigma_i$. The maps $\tau^j_i$ are defined similarly.
	
	The category of globular sets is by definition the presheaf category $[\G^{\mathrm{op}},\mathbf{Set}]$.
\end{defi}
\begin{defi}
	For  $0\leq n $, we denote with $\G_n$ the full subcategory of $\G$ generated by the set of objects $\{k\in\G \colon k\leq n\}$.
	
	The category of $n$-globular sets is by definition the presheaf category $[\G_n^{op},\mathbf{Set}]$.
\end{defi}
We now describe more complex shapes, which are a special kind of pasting of globes. These are needed to express the algebraic structure of $n$-groupoids. In what follows, we let $0\leq n \leq \infty$, where the case $n=\infty$ refers to globular sets.
\begin{defi}
We define $\Theta_0$ as the cocompletion of $\G$ with respect to diagrams of the form: 
\[
\begin{tikzcd}
\label{glob sums}
i_1 & & i_2 & & i_3 &\ldots & i_{m-1} & & i_m\\
&i'_1 \ar[ul,"\sigma"] \ar[ur, "\tau"{swap}] & & i'_2 \ar[ul,"\sigma"] \ar[ur, "\tau"{swap}] & &\ldots  & & i'_{m-1} \ar[ul,"\sigma"] \ar[ur, "\tau"{swap}]
\end{tikzcd}\]
$\Theta_0$ can thus be realized as a full subcategory of \([\G^{op},\mathbf{Set}]\) (see \cite{AR1} for a combinatorial description of this category).
%
	We call globular sums the objects of $\Theta_0$, and  define the height of the globular sum $A$ associated with the diagram above to be $\height(A)=\max \{i_k\}_{k \in \{1, \ldots, \ m\}}$. Given a globular sum $A$, we denote with $\iota_k^A$ the colimit inclusion $i_k\rightarrow A$, dropping subscripts when there is no risk of confusion.
\end{defi}
\begin{nota}
It is customary to denote by $D_i$ the object of $\Theta_0$ corresponding to $i\in \G$. 
Also, we denote by $\Tn $ the full subcategory of $\Theta_0$ spanned by globular sums of height less or equal than $n$.
\end{nota}
It is not hard to see that there is a fully faithful embedding functor $\Tn\rightarrow [\G_{n}^{op},\mathbf{Set}]$. The category $\Tn$ plays a similar role for $n$-groupoids as $\Theta_0$ does for $\infty$-groupoids.
\begin{defi}
	An $n$-truncated globular theory is a pair $(\mathfrak{E},\mathbf{F})$, where $\mathfrak{E}$ is a category and $\mathbf{F}\colon \Tn \rightarrow \mathfrak{E}$ is a bijective on objects functor that preserves the colimits of diagrams of the form \eqref{glob sums} used to define $\Tn$.
	
	We denote by $\mathbf{GlTh_n}$ the category of $n$-globular theories and $n$-globular sums preserving functors. More precisely, a morphism $H\colon(\mathfrak{E},\mathbf{F}) \rightarrow (\mathfrak{C},\mathbf{G})$ is a functor $H\colon \mathfrak{E} \rightarrow \mathfrak{C}$ such that $\mathbf{G}=H\circ \mathbf{F}$.
\end{defi}
If there is no risk of confusion we will omit the structural map $\mathbf{F}\colon \Tn \rightarrow \mathfrak{E}$ and simply denote the globular theory $(\mathfrak{E},\mathbf{F})$ by $\mathfrak{E}$.
\begin{defi}
	\label{models defi}
	Given an $n$-globular theory $\mathfrak{E}$, we define the category of its models, denoted $\mathbf{Mod}(\mathfrak{E})$, to be the category of functors $G\colon \mathfrak{E}^{op} \rightarrow \mathbf{Set}$ that preserve \(n\)-globular sums. Clearly, the Yoneda embedding $y\colon \mathfrak{E} \rightarrow [\mathfrak{E}^{op},\mathbf{Set}]$ factors through $\mathbf{Mod}(\mathfrak{E})$, and it will still be denoted by $y\colon \mathfrak{E} \rightarrow \mathbf{Mod}(\mathfrak{E})$. Also, notice that $\mathbf{Mod}(\Tn)\cong [\G_{n}^{op},\mathbf{Set}]$. Again, we denote the image of $i$ under $y$ by $D_i$, and we let \(\partial D_k\) be the free model on a pair of parallel \((k-1)\)-cells. Equivalently, we have \(\mathbf{Mod}(\mathfrak{E})(\partial D_k,X)\cong \{(a,b)\in X_{k-1}\times X_{k-1}\colon \epsilon(a)=\epsilon(b), \epsilon=s,t \}\). We denote the notion of parallelism between cells \(a,b\) by \(a \parallelsum b\).
\end{defi}
We now record the universal property of the category of models of an $n$-globular theory.
\begin{prop}
	\label{UP of models}
	Given an $n$-globular theory $\mathfrak{E}$, its category of models $\mathbf{Mod}(\mathfrak{E})$ enjoys a universal property: given any cocomplete category $\D$, a cocontinuous functor $F\colon \mathbf{Mod}(\mathfrak{E}) \rightarrow \D$ is determined up to a unique isomorphism by an $n$-globular sums preserving functor $\overline{F}\colon \mathfrak{E} \rightarrow \D$, corresponding to its restriction along the Yoneda embedding. Conversely, any such functor $\overline{F}\colon \mathfrak{E} \rightarrow \D$ extends in an essentially unique way to a cocontinuous one on $\mathbf{Mod}(\mathfrak{E})$.
\end{prop}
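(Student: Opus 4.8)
The plan is to realize $\mathbf{Mod}(\mathfrak{E})$ as a reflective localization of the presheaf category and then to combine two universal properties: that of presheaves as a free cocompletion, and that of a localization. Write $\widehat{\mathfrak{E}} := [\mathfrak{E}^{op},\mathbf{Set}]$. First I would recall the standard fact that $\widehat{\mathfrak{E}}$ is the free cocompletion of $\mathfrak{E}$: since $\mathfrak{E}$ is small, restriction along the Yoneda embedding $y\colon \mathfrak{E}\to\widehat{\mathfrak{E}}$ induces an equivalence between the category of cocontinuous functors $\widehat{\mathfrak{E}}\to\D$ and the category $\fun(\mathfrak{E},\D)$ of all functors, with pseudo-inverse given by left Kan extension $\lan_y$. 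For each globular sum $A\in\mathfrak{E}$, presented as the colimit of its defining globular-sum diagram $\delta_A$ of representables $D_{i_k}$ (the diagram used to define $\Theta_0$), the fact that $\mathbf{F}$ preserves this colimit supplies a canonical comparison morphism $c_A\colon \colim_{\delta_A} y(D_{i_k})\to y(A)$ in $\widehat{\mathfrak{E}}$. Let $S=\{c_A\}_A$ be the resulting set of maps.

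Next I would identify the models as the $S$-local objects. By the (contravariant) Yoneda lemma, a presheaf $P$ is $S$-local, i.e.\ $\widehat{\mathfrak{E}}(c_A,P)$ is a bijection for every $A$, precisely when the canonical map $P(A)\to\lim_{\delta_A} P(D_{i_k})$ is a bijection, which is exactly the condition that $P$ preserve the $n$-globular sum $A$. Hence $\mathbf{Mod}(\mathfrak{E})$ coincides with the full subcategory $S^{\perp}$ of $S$-local objects. Because $\widehat{\mathfrak{E}}$ is locally presentable and $S$ is a set, $S^{\perp}$ is reflective; write $L\dashv i$ for the reflection, with unit $\eta$. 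In particular $\mathbf{Mod}(\mathfrak{E})$ is cocomplete, with colimits computed by applying $L$ to the colimit taken in $\widehat{\mathfrak{E}}$, and the corestricted Yoneda embedding is $y_{\mathbf{Mod}}=L\circ y$.

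I would then invoke the universal property of this localization: for cocomplete $\D$, precomposition with $L$ identifies cocontinuous functors $\mathbf{Mod}(\mathfrak{E})\to\D$ with those cocontinuous functors $\widehat{\mathfrak{E}}\to\D$ that invert every map in $S$. The only nontrivial direction is that a cocontinuous $H\colon\widehat{\mathfrak{E}}\to\D$ inverting $S$ also inverts each unit $\eta_X$, whence the counit of the adjunction gives a natural isomorphism $H\cong (H\circ i)\circ L$; this holds because the class of morphisms inverted by $L$ is the strong saturation of $S$, and a cocontinuous functor inverting $S$ inverts its entire strong saturation. This saturation argument is the step I expect to be the main obstacle to spell out carefully, together with checking that the two resulting assignments are mutually pseudo-inverse.

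Finally I would splice the two equivalences together. Taking $H=\lan_y\overline{F}$ and using cocontinuity together with $H\circ y\cong\overline{F}$, the map $H(c_A)$ is identified with the canonical comparison $\colim_{\delta_A}\overline{F}(D_{i_k})\to\overline{F}(A)$; thus $H$ inverts $c_A$ if and only if $\overline{F}$ preserves the globular sum $A$. Consequently cocontinuous functors $\mathbf{Mod}(\mathfrak{E})\to\D$ correspond, up to unique isomorphism, to $n$-globular sums preserving functors $\overline{F}\colon\mathfrak{E}\to\D$, the correspondence being $F\mapsto F\circ y_{\mathbf{Mod}}$ in one direction and $\overline{F}\mapsto L\circ\lan_y\overline{F}$ in the other. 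That $F\circ y_{\mathbf{Mod}}$ indeed preserves globular sums follows since $y_{\mathbf{Mod}}(A)\cong\colim_{\delta_A} y_{\mathbf{Mod}}(D_{i_k})$ in $\mathbf{Mod}(\mathfrak{E})$ (as $L$ inverts $c_A$) and $F$ is cocontinuous, and the essential uniqueness is inherited from the two equivalences composed.
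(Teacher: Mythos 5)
Your argument is correct. The paper itself offers no proof of this proposition --- it is recorded as a standard fact about categories of models of (globular) sketches, going back to Ara's thesis --- so there is no in-paper argument to compare against; your route via the identification $\mathbf{Mod}(\mathfrak{E})\simeq S^{\perp}$ for the set $S$ of comparison maps $c_A$, the reflectivity of orthogonality classes in a presheaf category, and the splicing of the free-cocompletion and localization universal properties is exactly the standard proof. The one step you rightly flag, that a cocontinuous functor inverting $S$ inverts each unit $\eta_X$, goes through because the orthogonal reflection is built by a transfinite small object argument from pushouts of coproducts of maps in $S$ and their codiagonals, all of which are preserved by cocontinuous functors and send isomorphisms to isomorphisms.
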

Grothendieck groupoids are presented as models of a certain class of globular theories, namely the cellular and contractible ones.
\begin{defi}
	\label{contr glob th}
	Given $k\leq n$, two maps $f,g\colon D_k\rightarrow A$ in an $n$-globular theory \(\mathfrak{E}\) are said to be parallel if either $k=0$ or $f\circ \epsilon= g\circ \epsilon$ for $\epsilon=\sigma,\tau$.
	A pair of parallel maps $(f,g)$ is said to be admissible if $\height(A) \leq k+1$.
	A globular theory $(\mathfrak{C},F)$ is called contractible if for every admissible pair of maps $f,g\colon D_k\rightarrow A$ either $k=n$ and $f=g$, or $k<n$ and there exists an extension $h\colon D_{k+1}\rightarrow A$ rendering the following diagram serially commutative
	\[
	\begin{tikzcd}
D_k \ar[r,shift left=1ex,"f"] \ar[r,shift right=1ex,"g"{swap}] \ar[d, shift left=1ex, "\tau_k"] \ar[d, shift right=1ex, "\sigma_k"{swap}]& A\\
D_{k+1} \ar[ur,"h"{swap}]
	\end{tikzcd}
	\]
%
%
\end{defi}
%
The following notion is based on a slight variation of a construction explained in paragraph 4.1.3 of \cite{AR1},  which we record in the following proposition.
\begin{prop}
	\label{univ prop of glob th}
	Given an $n$-globular theory $\mathfrak{E}$ and a set $X$ of admissible pairs in it, there exists another $n$-globular theory $\mathfrak{E}[X]$ equipped with a morphism $\phi\colon \mathfrak{E} \rightarrow \mathfrak{E}[X]$ in $\mathbf{GlTh_n}$ with the following universal property: given an $n$-globular theory $\mathfrak{C}$, a morphism $H\colon\mathfrak{E}[X] \rightarrow \mathfrak{C}$ is determined up to a unique isomorphism by $F\overset{def}{=}H\circ \phi$, together with a choice of an extension to $D_{k+1}$ of the image under $F$ of each admissible pair $f,g\colon D_k \rightarrow A$ in $X$ with $k<n$, or the requirement that $F(f)=F(g)$ if $k=n$.
\end{prop}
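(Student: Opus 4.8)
The plan is to construct $\mathfrak{E}[X]$ explicitly by generators and relations at the level of the underlying category, following and slightly adapting the construction of \cite[4.1.3]{AR1}, and then to read off the stated universal property directly from this presentation. Write $X = X_{<n} \sqcup X_n$ according to whether an admissible pair $(f,g)\colon D_k \rightrightarrows A$ has $k<n$ or $k=n$. I would take $\mathfrak{E}[X]$ to have the same objects as $\mathfrak{E}$ (hence as $\Tn$), with structural functor the composite $\Tn \to \mathfrak{E} \to \mathfrak{E}[X]$, and morphisms generated by those of $\mathfrak{E}$ together with one new arrow $h_{(f,g)}\colon D_{k+1} \to A$ for each pair $(f,g)\in X_{<n}$, subject to the relations already holding in $\mathfrak{E}$, the filler equations $h_{(f,g)}\circ\sigma_k = f$ and $h_{(f,g)}\circ\tau_k = g$ (so that $h_{(f,g)}$ fills the serially commutative triangle of Definition \ref{contr glob th}), and the top-dimensional identifications $f=g$ for each $(f,g)\in X_n$. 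Following \cite[4.1.3]{AR1}, the hom-sets of $\mathfrak{E}[X]$ are organized so that morphisms out of a globular sum $A$ are determined by their restrictions along the colimit inclusions $\iota_k^A$; this guarantees from the outset that the structural functor preserves globular sums. The arrow $\phi\colon \mathfrak{E} \to \mathfrak{E}[X]$ is the evident inclusion.

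Next I would check that $\mathfrak{E}[X]$ is genuinely an $n$-globular theory and that $\phi$ is a morphism in $\mathbf{GlTh_n}$. Bijectivity on objects of the structural functor is immediate, since no objects are added or identified. The substantive point is that the globular sums survive, i.e.\ that adjoining the $h_{(f,g)}$ and imposing the above relations neither collapses nor destroys the colimits defining $\Tn$. This is where I would invoke (the variation of) \cite[4.1.3]{AR1}: one establishes a normal form for the morphisms of $\mathfrak{E}[X]$, exhibiting each as an $\mathfrak{E}$-morphism composed with a controlled sequence of the adjoined generators, so that the hom-sets out of a globular sum $A$ decompose exactly as the prescribed combination over the globes $D_{i_j}$ comprising $A$. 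I expect this verification --- ensuring that the free adjunction of fillers is sufficiently \emph{non-collapsing} --- to be the main obstacle. The only genuinely new feature compared with \cite{AR1} is the simultaneous presence of the top-dimensional identifications $f=g$ for $k=n$, which must be checked to be compatible with the colimit structure, together with the absence of any globe $D_{n+1}$, which is precisely what forces these pairs to be quotiented rather than filled.

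Finally, the universal property follows formally from the presentation. Given a morphism $H \colon \mathfrak{E}[X] \to \mathfrak{C}$, put $F \overset{def}{=} H \circ \phi$; then $H$ is completely determined by $F$ together with the images $H(h_{(f,g)})$, each of which is an extension to $D_{k+1}$ of the admissible pair $(F(f), F(g))$, while the relations $f=g$ in $X_n$ force $F(f) = F(g)$. Conversely, given any $n$-globular sums preserving functor $F \colon \mathfrak{E} \to \mathfrak{C}$ with $F(f) = F(g)$ for all $(f,g)\in X_n$, together with a choice of filler in $\mathfrak{C}$ for each $(F(f),F(g))$ with $(f,g)\in X_{<n}$, I would define $H$ to agree with $F$ on $\mathfrak{E}$ and to send each $h_{(f,g)}$ to the chosen filler. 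Since all three families of relations are respected by construction, $H$ is a well-defined morphism of globular theories with $H\circ\phi = F$, and it is the unique such extension because it is prescribed on a generating set of morphisms. This bijection between morphisms out of $\mathfrak{E}[X]$ and the described data is natural in $\mathfrak{C}$, giving the characterization up to unique isomorphism.
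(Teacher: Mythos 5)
Your proposal is correct and matches the paper's intent exactly: the paper gives no proof of this proposition, deferring to the construction in paragraph 4.1.3 of \cite{AR1} and summarizing it in words precisely as you do (freely adjoining fillers for the non-maximal pairs, equalizing the $n$-dimensional ones, and reading off the universal property from the presentation). Your identification of the non-collapsing/normal-form verification as the substantive technical point, and of the top-dimensional identifications as the only new ingredient relative to \cite{AR1}, is exactly where the content lies.
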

In words, $\mathfrak{E}[X]$ is obtained from $\mathfrak{E}$ by universally adding a lift for each pair in $X$ of non-maximal dimension and by equalizing parallel $n$-dimensional operations in $X$.
\begin{defi}
	\label{cell glob th}
	An $n$-globular theory $\mathfrak{E}$ is said to be cellular if there exists a functor $\mathfrak{E}_{\bullet} \colon \omega \rightarrow \mathbf{GlTh_n}$, where $\omega$ is the first infinite ordinal, such that:
	\begin{enumerate}
		\item $\mathfrak{E}_0 \cong \Tn$;
		\item for every $m \geq 0$, there exists a family $X_m$ of admissible pairs of arrows in $\mathfrak{E}_m$ (as in Definition \ref{contr glob th}) such that $\mathfrak{E}_{m+1}\cong \mathfrak{E}_m[X_m]$;
		\item $\colim_{m \in \omega}\mathfrak{E}_{m}\cong \mathfrak{E}$.
	\end{enumerate}
	Equivalently, one can consider arbitrary ordinals $\gamma$ and assume $X_{\alpha}$ to be a singleton for each $\alpha<\gamma$.
\end{defi}
As anticipated earlier, we now define the class of $n$-globular theories which are appropriate to develop a theory of $n$-groupoids.
\begin{defi}
	\label{n groupoid defi}
	An $n$-truncated (groupoidal) coherator, or, briefly, an $n$-coherator, is a cellular and contractible $n$-globular theory. Given an $n$-coherator $\mathfrak{G}$, the category of $n$-groupoids of type $\mathfrak{G}$ is the category $\mathbf{Mod}(\mathfrak{G})$ of models of $\mathfrak{G}$, which we will sometime simply denote by $\ngpd{n}$ with no mention of the coherator itself, when this does not affect the exposition. 
\end{defi}
The restriction of an $n$-groupoid $X\colon\mathfrak{G}^{op} \rightarrow \mathbf{Set}$ to ${\Tn}^{op}$ gives an object of $\mathbf{Mod}(\Tn)\simeq [\G_n^{op},\mathbf{Set}]$, which we call the underlying $n$-globular set of $X$. The set $X_i$ represents the set of $i$-cells of $X$ for each $i\leq n$.

The definition of the homotopy group $\pi_n$ for $\infty$-groupoids can be found in \cite{MA} or \cite{AR2} and it also applies to $n$-groupoids. A morphism of $n$-groupoids is said to be a weak equivalence if it induces a bijection on these homotopy groups.

The following result gives a conditional proof of the existence of a left semi-model structure (simply called semi-model structure in \cite{EL2}) on the category of Grothendieck \(n\)-groupoids for \(0\leq n \leq \infty\). It follows from the results in Section 3 of \cite{EL2}.

\begin{prop}
	\label{model str} Fix an $n$-coherator $\mathfrak{G}$. If for all $n$-groupoid $X$ of type $\mathfrak{G}$ and any morphism $f:D_k \rightarrow X$ the natural map 
 \[ X\rightarrow X\plus{D_n}D_{n+1}\]
 induced by \(f\) and \(\sigma \colon D_n \rightarrow D_{n+1}\) is a weak equivalence, then there exists a cofibrantly generated left semi-model structure on the category of \(n\)-groupoids of type $\mathfrak{G}$  whose generating cofibrations (resp. trivial cofibrations) are the boundary inclusions \(\{\partial D_n \rightarrow D_n\}_{n \geq 0}\) (resp. the source maps \(\{D_n\rightarrow D_{n+1}\}_{n \geq 0}\)).

Its weak equivalences are the weak equivalences of $n$-groupoids as defined above. We call this structure the \emph{canonical} one.
\end{prop}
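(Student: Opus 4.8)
The plan is to obtain the structure from a recognition theorem for cofibrantly generated left semi-model categories (as in \cite{Ba}), verifying its hypotheses one at a time. The category $\mathbf{Mod}(\mathfrak{G})$ is locally presentable, being the category of models of a limit sketch (the $n$-globular-sum-preserving presheaves on $\mathfrak{G}^{op}$); hence it is bicomplete and the small object argument applies to any set of morphisms. I take $I=\{\partial D_k\rightarrow D_k\}_{k\geq 0}$ and $J=\{D_k\rightarrow D_{k+1}\}_{k\geq 0}$ (the source maps $\sigma$, with top map $\sigma\colon D_n\rightarrow D_{n+1}$ as in the statement), and let $W$ be the class of weak equivalences. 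The cofibrations, fibrations and trivial fibrations are then forced to be $\mathrm{cof}(I)$, $\mathrm{inj}(J)$ and $\mathrm{inj}(I)$ respectively, and everything reduces to checking the recognition hypotheses against these classes.

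First I would dispatch the formal conditions. The class $W$ satisfies two-out-of-three and is closed under retracts directly from the definition of the homotopy groups, and its accessibility follows from the $\pi_k$ being computed by accessible functors on a locally presentable category. That each source map lies in $\mathrm{cof}(I)$ is an elementary cell-structure check: $D_k\rightarrow D_{k+1}$ factors as the attachment of the target $k$-cell (a pushout of $\partial D_k\rightarrow D_k$) followed by the attachment of the top cell $\partial D_{k+1}\rightarrow D_{k+1}$, both in $I$; this also yields $\mathrm{inj}(I)\subseteq\mathrm{inj}(J)$. Finally, a map $p\in\mathrm{inj}(I)$ is precisely one for which every cell of the codomain equipped with a chosen lift of its boundary can itself be lifted, and a direct cell-chase shows such a $p$ is bijective on every $\pi_k$; thus $\mathrm{inj}(I)\subseteq W$.

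The decisive condition is $\mathrm{cell}(J)\subseteq W$. Since every map in $\mathrm{cell}(J)$ is a transfinite composite of pushouts of elements of $J$, and $W$ is closed under transfinite composition and retracts, it suffices to show that each pushout of a source map $D_k\rightarrow D_{k+1}$ along an arbitrary $f\colon D_k\rightarrow X$ is a weak equivalence. For $k<n$ this is the case of freely filling a cell of non-maximal dimension, handled by the groupoidal operations supplied by the contractibility of $\mathfrak{G}$; the remaining case $k=n$, where the definition of $n$-groupoid carries genuine equations in the top dimension, is exactly the \emph{pushout lemma} we are assuming, namely that $X\rightarrow X\plus{D_n}D_{n+1}$ lies in $W$. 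Together with the previous paragraph this gives $\mathrm{cell}(J)\subseteq\mathrm{cof}(I)\cap W$.

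The step I expect to require the most care, and where the semi-model qualifier becomes essential, is the compatibility inclusion $\mathrm{inj}(J)\cap W\subseteq\mathrm{inj}(I)$, identifying the fibrations that are weak equivalences with the trivial fibrations. Given such a $p$, I would factor it as $q\circ j$ with $j\in\mathrm{cell}(I)$ and $q\in\mathrm{inj}(I)$; since $q\in W$, two-out-of-three gives $j\in\mathrm{cof}(I)\cap W$, and a retract argument reduces the claim to lifting $j$ against the fibration $p$. This lift is the delicate point: one shows that $j$, being a cofibration and a weak equivalence, is a retract of a relative $J$-cell complex and hence lifts against any fibration, but the argument only closes up when the relevant domains are cofibrant — which is precisely what yields a left semi-model structure rather than a full model structure — and it leans throughout on the pushout lemma to guarantee that the $J$-cell complexes built along the way remain weak equivalences. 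Assembling these verifications into the recognition theorem, along the lines of \cite[Section 3]{EL2}, produces the canonical left semi-model structure, with $W$ as its class of weak equivalences by construction.
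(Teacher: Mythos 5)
The paper itself offers no proof of this proposition: it is imported wholesale from Section~3 of \cite{EL2}, so the real comparison is against the recognition-theorem argument there (which follows Ara's strategy for the $\infty$-dimensional case). Your overall skeleton --- local presentability of $\mathbf{Mod}(\mathfrak{G})$, the sets $I$ and $J$, the factorization $J\subseteq\mathrm{cell}(I)$, $\mathrm{inj}(I)\subseteq W$, $\mathrm{cell}(J)\subseteq W$, and the compatibility $\mathrm{inj}(J)\cap W\subseteq\mathrm{inj}(I)$ with cofibrancy of domains accounting for the ``left semi-'' qualifier --- is the correct one and matches that reference.

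The genuine gap is in your verification of $\mathrm{cell}(J)\subseteq W$. You split the pushouts of source maps into $k<n$, which you claim is ``handled by the groupoidal operations supplied by the contractibility of $\mathfrak{G}$'', and $k=n$, which you take to be the hypothesis. The first claim is unsubstantiated and, as far as anyone knows, unprovable by those means: contractibility supplies operations \emph{on} a fixed groupoid, but it does not control the cells freely generated by the attachment $X\mapsto X\amalg_{D_k}D_{k+1}$, and showing that this attachment preserves homotopy groups is precisely the \emph{pushout lemma} --- the open conjecture around which \cite{Hen}, \cite{EL} and \cite{EL2} are organized. It is no easier for $k+1<n$ in an $n$-groupoid than it is for $\infty$-groupoids; the only dimension where finiteness of $n$ changes anything is the top one, where the attachment becomes a quotient of $n$-cells. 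The hypothesis of the proposition quantifies over ``any morphism $f\colon D_k\rightarrow X$'' exactly because the pushout lemma is being \emph{assumed} in every dimension $k\leq n$ (the subscript $n$ in the displayed pushout is a typo for $k$); your proof silently weakens this to the top-dimensional instance and then asserts the remaining, equally hard, instances without proof. Read all of $\mathrm{cell}(J)\subseteq W$ off the hypothesis instead, adding only the (genuinely easy) remark that $W$ is closed under the relevant transfinite compositions because the $\pi_k$ commute with filtered colimits of models. A secondary point: for $\mathrm{inj}(J)\cap W\subseteq\mathrm{inj}(I)$ the cited source does not run your retract argument but a direct Whitehead-type cell chase (Lemma~3.6 of \cite{EL2}, also invoked in the proof of Theorem~\ref{thm:Quillen_equiv_cosk_truncated}); your route can be made to close up in the semi-model setting, but it is more delicate than your sketch suggests.
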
 

In fact, it is sufficient to restrict to $n$-groupoids $X$ which are cofibrant with respect to said cofibrations, which is a necessary and sufficient condition for the existence of this left semi-model structure.

\section{$n$-coskeletal and $n$-truncated models}
\label{sec:coskeletal_truncated}

In this section we introduce the notion of $n$-truncated and $n$-coskeletal model of a globular theory. We prove that the category of $n$-groupoids is equivalent to that of $n$-truncated $\infty$-groupoids. We then consider an $n$-globular theory $\C^{\leq n}$ suitably obtained from the $n$-th stage of a tower for a coherator $\C$ for $\infty$-groupoids, and we show how to recover the category of its models by looking at the $n$-coskeletal $\C$-models. Finally, we show that the homotopy theories modeled by $n$-groupoids and $(n+1)$-coskeletal $\infty$-groupoids coincide.
\begin{defi}
Define a functor $t_n\colon \G \rightarrow \G_n$ whose action on objects is given by:
\[t_n D_k \overset{def}{=}\begin{cases}
D_k & k \leq n\\
D_n & k >n
\end{cases}
\]  with the unique possible extension to morphisms. This induces an adjunction of the form:
\begin{equation}
\begin{tikzcd}
\label{truncation adj}
{[}\G^{op},\mathbf{Set}{]} \ar[r,bend left,"{t_n}_!"]\ar[r, phantom, "\perp" description]& {[}\G_n^{op},\mathbf{Set}{]} \ar[l,bend left, "t_n^*"]
\end{tikzcd}
\end{equation}
%
where $t_n^*$ is given by precomposition and its left adjoint \({t_n}_!\) is the left Kan extension $Lan_{\mathrm{y}}(\mathrm{y}\circ t_n)$ displayed below:
\[
\begin{tikzcd}
\G \ar[r,"t_n"] \ar[d,"y" {swap}] &\G_n \ar[r,"y"] & {[}\G_n^{op},\mathbf{Set}{]}\\
{[}\G^{op},\mathbf{Set}{]} \ar[urr,"{t_{n}}_!"{swap}]
\end{tikzcd}
\]
\end{defi}
It is easy to prove that one has:
\[(t_n^*X)_k =\begin{cases}
X_k & k \leq n\\
X_n & k >n
\end{cases}\] and for $\epsilon=\sigma,\tau$ we have $ (t_n^*X)(\epsilon_k)=X(\epsilon_k)$ if $k\leq n$ and the identity otherwise. Similarly, we get 
\[({t_n}_!Y)_k =\begin{cases}
Y_k & k < n\\
Y_n/\sim & k >n
\end{cases}\] where $\sim$ is the equivalence relation generated by posing that given two $n$-cells $x,y$ in $Y$, we have that $x\sim y$ if there is an $(n+1)$-cell $H\colon x \rightarrow y$.
\begin{defi}
A globular set $X$ is called \emph{$n$-truncated} if the unit of the adjunction ${t_n}_!\dashv  t_n^*$ is an isomorphism at $X$, i.e. $\eta_X\colon X \overset{\cong}{\longrightarrow} t_n^*\circ {t_n}_!X$.
\end{defi}
The following result is straightforward.
\begin{prop}
	\label{base case}
The adjunction in \eqref{truncation adj} restricts to an equivalence of categories between the category of $n$-truncated globular sets and that of $n$-globular sets.
\end{prop}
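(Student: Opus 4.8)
The plan is to recognize this as an instance of the general fact that any adjunction $F \dashv G$ restricts to an adjoint equivalence between the full subcategory of objects at which the unit is invertible and the full subcategory of objects at which the counit is invertible. Applied to ${t_n}_! \dashv t_n^*$, the first of these subcategories is, by the very definition of $n$-truncated, the category of $n$-truncated globular sets inside $[\G^{op},\mathbf{Set}]$. The whole statement therefore reduces to identifying the second subcategory with all of $[\G_n^{op},\mathbf{Set}]$: that is, to showing that the counit $\epsilon\colon {t_n}_!\, t_n^* \to \mathrm{id}$ is an isomorphism at every $n$-globular set $Y$, or equivalently that $t_n^*$ is fully faithful.

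First I would record the formal consequences of the counit being invertible, so as to isolate the one substantive point. Granting that $\epsilon_Y$ is an isomorphism for all $Y$, the triangle identity $t_n^*(\epsilon_Y)\circ \eta_{t_n^* Y}=\mathrm{id}_{t_n^* Y}$ shows $\eta_{t_n^* Y}$ is invertible, so $t_n^*$ indeed lands in the $n$-truncated globular sets; conversely ${t_n}_!$ lands in $[\G_n^{op},\mathbf{Set}]$ tautologically. On $n$-truncated objects the unit is invertible by definition, and on all of $[\G_n^{op},\mathbf{Set}]$ the counit is invertible by assumption, so the two restricted functors are mutually inverse up to natural isomorphism, which is exactly the asserted equivalence.

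The only real content, then, is the invertibility of the counit, which I would verify directly from the explicit degreewise formulas recorded above. For $Y\in[\G_n^{op},\mathbf{Set}]$ one has $(t_n^* Y)_k=Y_k$ for $k\leq n$ and $(t_n^* Y)_k=Y_n$ for $k>n$, with all source and target maps in degrees $\geq n$ equal to the identity. Feeding this into the formula for ${t_n}_!$, the level-$n$ value is $(t_n^* Y)_n$ modulo the relation identifying $n$-cells joined by an $(n+1)$-cell; but the $(n+1)$-cells of $t_n^* Y$ form $(t_n^* Y)_{n+1}=Y_n$ with identity source and target, so each one joins an $n$-cell only to itself and the relation is trivial. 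Hence $({t_n}_!\, t_n^* Y)_k=Y_k$ for all $k\leq n$, and one checks this degreewise identification is precisely $\epsilon_Y$, so $\epsilon_Y$ is an isomorphism.

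I expect the main (indeed the only) obstacle to be this last computation: pinning down that the equivalence relation defining ${t_n}_!$ collapses to nothing on objects in the image of $t_n^*$, equivalently that $t_n^*$ is fully faithful. Everything else is the formal ``fixed-point'' mechanics of an idempotent adjunction. If one prefers to avoid the degreewise bookkeeping, an alternative is to prove full faithfulness of $t_n^*$ abstractly: a natural transformation $t_n^* Y \to t_n^* Y'$ is forced to be constant in all degrees $\geq n$, since the structure maps of $t_n^* Y$ there are identities, so its data is exactly that of a natural transformation $Y\to Y'$; whence $t_n^*$ is full and faithful and the counit is invertible.
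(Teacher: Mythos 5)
Your proof is correct. The paper states this proposition without proof (calling it straightforward), and your argument --- reducing everything to the invertibility of the counit via the standard fixed-point formalism for adjunctions, then checking degreewise that the equivalence relation defining ${t_n}_!$ collapses on objects in the image of $t_n^*$ because the $(n+1)$-cells of $t_n^*Y$ have identity source and target --- is exactly the intended verification.
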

We can give the following useful characterization of $n$-truncated globular sets as follows.
\begin{lemma}
A globular set $X$ is $n$-truncated if and only if for each pair of $k$-cells $x,y$ in $X$ with $k\geq n$ one has:
\[X(x,y)=\begin{cases}
\mathbf{1} & x=y\\
\emptyset & x \neq y
\end{cases}\] where $X(x,y)$ is the globular set whose $m$-cells are $(m+k+1)$-cells in $X$ whose $k$-dimensional boundary is given by $(x,y)$, and $\mathbf{1}$ denotes the terminal globular set.
\end{lemma}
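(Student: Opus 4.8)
The plan is to unwind $n$-truncatedness into an explicit, dimension-by-dimension condition on the unit $\eta_X\colon X \to t_n^*{t_n}_!X$, and then match it against the hom-globular-set condition in the statement. First I would use the levelwise formulas for ${t_n}_!$ and $t_n^*$ recorded just above the lemma to describe $\eta_X$ in each degree. Below dimension $n$ the unit is the identity; in dimension $n$ it is the quotient map $q\colon X_n \to X_n/\!\sim$; and for $k>n$ naturality of $\eta_X$ with respect to the iterated source and target operators, together with the fact that \emph{every} structure map above level $n$ in $t_n^*{t_n}_!X$ is an identity, forces $(\eta_X)_k = q\circ\sigma^k_n = q\circ\tau^k_n\colon X_k \to X_n/\!\sim$. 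Consequently $X$ is $n$-truncated if and only if $\sim$ is trivial and $\sigma^k_n\colon X_k \to X_n$ is a bijection for every $k>n$; once $\sim$ is trivial, $q$ is invertible and the two displayed expressions additionally force $\sigma^k_n=\tau^k_n$.

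Next I would record two bookkeeping facts about the hom-globular-sets $X(x,y)$. The first is a nesting identity: if $x,y$ are $k$-cells ($k\ge n$) with common $n$-boundary $(a,b)$, then $X(x,y)$ is computed inside $X(a,b)$, i.e.\ $X(x,y)=X(a,b)(x,y)$ where $x,y$ are regarded as $(k-n-1)$-cells of $X(a,b)$. The second is that a nonempty hom-globular-set must contain a $0$-cell (any $m$-cell has iterated sources descending to dimension $0$), and that a globular set with exactly one cell in each dimension is isomorphic to $\mathbf 1$. From the nesting identity it follows that the condition of the lemma for all parallel $k$-cells with $k\ge n$ reduces to the single case $k=n$: if $X(a,b)=\emptyset$ there are no higher cells over $(a,b)$, while if $X(a,b)=\mathbf 1$ all its hom-globular-sets are again terminal or empty.

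Finally I would prove the two implications. For ``condition $\Rightarrow$ $n$-truncated'': an $(n+1)$-cell from $x$ to $y$ is a $0$-cell of $X(x,y)$, so nonemptiness forces $x=y$ and hence $\sim$ is trivial; and since $X(a,a)=\mathbf 1$ for every $n$-cell $a$ while $X(a,b)=\emptyset$ for $a\parallelsum b$ with $a\ne b$, each $n$-cell $a$ is the $n$-source of exactly one $k$-cell for every $k>n$, whence $\sigma^k_n$ is a bijection. For the converse, triviality of $\sim$ means there is no $(n+1)$-cell between distinct $n$-cells, and by the descent-to-$0$-cells remark there is then no higher cell either, giving $X(x,y)=\emptyset$ whenever $x\ne y$; bijectivity of $\sigma^k_n$ together with $\sigma^k_n=\tau^k_n$ then produces, for each $n$-cell $a$, a unique $m$-cell of $X(a,a)$ in every dimension $m$, so that $X(a,a)\cong\mathbf 1$.

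The step I expect to be the main obstacle is the analysis of the unit above level $n$: extracting that $(\eta_X)_k=q\circ\sigma^k_n$ and, crucially, that $\sigma^k_n=\tau^k_n$ once $\sim$ is trivial. This identification of source with target is exactly what pins the behaviour of $X$ to the diagonal, i.e.\ the ``$x=y$'' clause, and it has to be read off correctly from the fact that $t_n$ collapses all globular operators above dimension $n$ to identities; the remaining arguments are then routine globular bookkeeping.
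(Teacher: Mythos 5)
The paper states this lemma without proof (it is presented as a routine consequence of the explicit formulas for ${t_n}_!$ and $t_n^*$), so there is no argument of the paper's to compare against; your proof is a correct and complete elaboration of exactly the intended unwinding. All the key points check out: $(\eta_X)_k=q\circ X(\sigma^k_n)=q\circ X(\tau^k_n)$ for $k>n$ does follow from naturality (and independently from globularity, since the $(n+1)$-source of a $k$-cell witnesses $\sigma^k_n(z)\sim\tau^k_n(z)$), the nesting identity $X(x,y)=X(a,b)(x,y)$ is valid, and the only case you pass over silently --- pairs $x\neq y$ that are not parallel --- is trivial, since then $X(x,y)=\emptyset$ for boundary reasons regardless of any truncatedness hypothesis.
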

\begin{defi}
Given a globular theory $\C$, we say that a $\C$-model $X$ is $n$-truncated if its underlying globular set is such.  We denote the full subcategory of $\C$-models spanned by $n$-truncated objects by $\mathbf{Mod}(\C)_{n-tr}$.
\end{defi}
Let $\C$ be a coherator for $\infty$-groupoids. We want to define an $n$-truncated coherator out of $\C$, denoted with $\overline{\C}^{\leq n}$ in such a way that we have an equivalence of categories of the form:
\[\mathbf{Mod}(\C)_{n-tr}\simeq \mathbf{Mod}(\overline{\C}^{\leq n})\] 
We define it via a tower of length $n+1$, in the following manner: consider the defining tower of $\C$, consisting of a functor $\C_{\bullet}\colon \omega\rightarrow \mathbf{GlTh}$ into the category of globular theories and globular maps. Without loss of generality we can assume $\C_{k+1}=\C_k[A_k]$ where each pair $(h_1,h_2)\in A_k$ is of the form $\partial D_{k+1}\rightarrow B$ for some globular sum $B$. Informally, this means that we only add $(k+1)$-dimensional operation when going from $C_k$ to $C_{k+1}$. This assumption lets us construct a tower of $n$-globular theories of length n+1 $\C^{\leq n}_{\bullet}\colon \mathbf{n+1}\rightarrow \mathbf{GlTh_n}$ by setting $\C^{\leq n}_{k+1}\overset{def}{=}\C^{\leq n}_k[A'_k]$, where $A'_k$ corresponds to $A_k$ under the identification of $\C^{\leq n}_k$ with the full subcategory of $\C_k$ on globular sums of dimension less than or equal to $n$. This holds since both categories have the same universal property. Finally, set $\overline{\C^{\leq n}}=\C^{\leq n}_{n+1}\overset{def}{=}\C^{\leq n}_n[A'_n]$, i.e. the universal globular theory obtained by identifying pairs in $A'_n$.
\begin{prop}
	There is an equivalence of categories of the form:
	\begin{equation}
	\label{truncation eq}
	\mathbf{Mod}(\C)_{n-tr}\simeq  \mathbf{Mod}(\overline{\C}^{\leq n})\simeq \ngpd{n}
	\end{equation} 
	Moreover, this adjunction is compatible with the one depicted in \eqref{truncation adj} at the level of underlying globular sets.
\end{prop}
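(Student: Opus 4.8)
The plan is to prove the first equivalence by induction along the defining tower of $\C$, and then to recognize $\overline{\C}^{\leq n}$ as an $n$-coherator so that its models are, by Definition \ref{n groupoid defi}, the category $\ngpd{n}$. Since $\C\cong\colim_m\C_m$ in $\mathbf{GlTh}$ along bijective-on-objects functors, and globular-sum preservation is tested on objects already present in $\C_0\cong\Theta_0$, a $\C$-model is exactly a compatible family of $\C_m$-models; hence $\mathbf{Mod}(\C)\cong\lim_m\mathbf{Mod}(\C_m)$ along the restriction functors, and intersecting with the $n$-truncation condition (a property of the underlying globular set) gives $\mathbf{Mod}(\C)_{n-tr}\cong\lim_m\mathbf{Mod}(\C_m)_{n-tr}$. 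I would therefore first establish, by induction on $m$, a restriction-compatible equivalence $\mathbf{Mod}(\C_m)_{n-tr}\simeq\mathbf{Mod}(\C^{\leq n}_{\min(m,n+1)})$. The base case $m=0$ is precisely Proposition \ref{base case}, identifying $n$-truncated globular sets with $n$-globular sets.

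For the inductive step I would invoke Proposition \ref{univ prop of glob th} to describe $\mathbf{Mod}(\C_m[A_m])$: a model is a $\C_m$-model together with, for each non-maximal pair $(h_1,h_2)\colon\partial D_{m+1}\to B$ in $A_m$, a filling function assigning to every $B$-configuration an $(m+1)$-cell with the prescribed source and target. Three regimes arise. When $m+1\leq n$ the filling cells are genuine and $\height(B)\leq m+1\leq n$, so the added data matches verbatim that added by $A'_m$ on the truncated side, and the equivalence extends. When $m=n$ the added $(n+1)$-cells lie in the trivial range: by the characterization of $n$-truncated globular sets recorded just above, an $(n+1)$-cell between parallel $n$-cells exists if and only if they coincide and is then unique, so for $n$-truncated models possessing such a filler is equivalent to imposing $h_1=h_2$, which is exactly the identification defining $\overline{\C}^{\leq n}=\C^{\leq n}_n[A'_n]$. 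When $m>n$ the filler concerns parallel $m$-cells with $m\geq n+1$; but in $t_n^{*}{t_n}_{!}X$ the source and target maps above dimension $n$ are identities, so parallel cells of dimension $>n$ automatically coincide and the filler exists uniquely with no extra datum, whence $\mathbf{Mod}(\C_{m+1})_{n-tr}\simeq\mathbf{Mod}(\C_m)_{n-tr}$. Passing to the limit, and using that the truncated tower is constant equal to $\overline{\C}^{\leq n}$ for $m\geq n+1$, yields the first equivalence in \eqref{truncation eq}.

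It then remains to check that $\overline{\C}^{\leq n}$ is an $n$-coherator. Cellularity is immediate from the construction, the tower $\C^{\leq n}_\bullet$ starting at $\Tn$ and proceeding by the operations $[A'_k]$, as required by Definition \ref{cell glob th}. For contractibility, admissible pairs $f,g\colon D_k\to A$ with $k<n$ receive fillers transported from $\C$, whose fillers for such pairs already live among globular sums of height $\leq n$ and hence survive in $\overline{\C}^{\leq n}$. The top-dimensional case $k=n$ I would deduce from the equivalence just proved together with faithfulness of the Yoneda embedding $\overline{\C}^{\leq n}\hookrightarrow\mathbf{Mod}(\overline{\C}^{\leq n})$ (Definition \ref{models defi}): lifting $f,g$ to parallel $n$-operations of $\C$ and using contractibility of $\C$ produces an $(n+1)$-cell between them in every model, so evaluating on $n$-truncated models forces $X(f)=X(g)$; transporting along the equivalence shows $f,g$ act identically on all $\overline{\C}^{\leq n}$-models, whence $f=g$. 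Finally, compatibility with \eqref{truncation adj} holds by construction: the forgetful functors to globular sets intertwine the equivalence with ${t_n}_{!}\dashv t_n^{*}$, since the backward direction is built using $t_n^{*}$ while restriction realizes ${t_n}_{!}$ on underlying globular sets, lying over Proposition \ref{base case}.

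The main obstacle I anticipate is the top-dimensional contractibility of $\overline{\C}^{\leq n}$, namely that \emph{every} parallel pair of $n$-operations is identified and not merely the chosen generators $A'_n$: the subtlety is that $(n+1)$-fillers in $\C$ may be composites rather than freshly added generators, which is why the model-theoretic argument above is preferable to a direct combinatorial analysis of the relation generated by $A'_n$. A secondary point needing care is the bookkeeping in the regime $m>n$, where one must verify that operations of output dimension $>n+1$ contribute neither data nor constraints after $n$-truncation, and this rests squarely on the identity-maps description of $t_n^{*}$ above dimension $n$.
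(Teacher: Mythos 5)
Your proof follows essentially the same route as the paper's: induction along the defining tower starting from Proposition \ref{base case}, with the same three dimensional regimes (genuine fillers for $k<n$, fillers between parallel $n$-cells collapsing to the equalization $X(h_1)=X(h_2)$ at the top stage, and no new data above dimension $n$ because source and target maps of an $n$-truncated model are identities there). The only substantive difference is that you explicitly verify that $\overline{\C}^{\leq n}$ is an $n$-coherator so that $\mathbf{Mod}(\overline{\C}^{\leq n})\simeq\ngpd{n}$ — a point the paper leaves implicit — which is a welcome supplement rather than a different method.
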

\begin{proof}
	We prove by induction on $k$ that there is an equivalence of categories of the form $\mathbf{Mod}(\C_k)_{n-tr}\simeq \mathbf{Mod}(\overline{\C}^{\leq n}_k)$, where we extend the tower of $\overline{\C}^{\leq n}$ with identities. When $k=0$ this is precisely the content of Proposition \ref{base case}. Now let's assume the statement holds for $k$, and let's prove it for $k+1$. Let $0<k<n$, by definition, an $n$-truncated $\C_{k+1}$-model $X$ is precisely an  $n$-truncated $\C_{k}$-model together with structural maps $X(\rho)\colon X(B)\rightarrow X(k+1)$ with $s(X(\rho))=X(h_1)$ and $t(X(\rho))=X(h_2)$ for every pair $(h_1,h_2)\colon \partial D_{k+1} \rightarrow B$ in $A_k$. By inductive assumption, this corresponds exactly to a $\C^{\leq n}_k$-model with interpretation of all maps added as fillers of pairs in $A'_k$ which is by definition a $\C^{\leq n}_{k+1}$-model.
	
	Now let $k=n+1$. By definition, an $n$-truncated $\C_{n+1}$-model $X$  is precisely an  $n$-truncated $\C_{n}$-model together with structural maps $X(\rho)\colon X(B)\rightarrow X(n+1)$ with $s(X(\rho))=X(h_1)$ and $t(X(\rho))=X(h_2)$ for every pair $(h_1,h_2)\colon \partial D_{n+1} \rightarrow B$ in $A_n$. But, since $X$ is $n$-truncated, we have that $X(\sigma_{n+1})=X(\tau_{n+1})=\mathbf{Id}_{X_n}$, so that this corresponds, again by inductive assumption, to a $\C^{\leq n}_n$-model which equalizes each pair $(h_1,h_2)$ in $A'_n$. This is clearly equivalent to being a $\overline{\C}^{\leq n}_{n+1}$-model.
	
	Finally, let $k\geq n+1$. To conclude the proof it is enough to show that the category of $n$-truncated $\C_{k+1}$-models is equivalent to that of $n$-truncated $\C_{k}$-models. Arguing as before, we see that if $(h_1,h_2)$ is a pair in $A_k$ and $X$ is $n$-truncated, then in particular $h_1$ and $h_2$ are parallel, which gives us $X(h_1)=s(X(h_1))=X(s(h_1))=X(s(h_2))=s(X(h_2))=X(h_2)$ for dimensionality reasons (since $X$ is $n$-truncated), so that the interpretation of $X(\rho)$ exists and is uniquely given by $X(h_1)=X(h_2)$. This proves that $\mathbf{Mod}(\C_k)_{n-tr} \simeq \mathbf{Mod}_n(\C_{k+1})_{n-tr}$ which concludes the proof.
\end{proof}
Consider now the inclusion $\iota_n \colon \G_n \rightarrow \G$, which induces an adjunction of the form:
\begin{equation}
\label{cosk adj}
\begin{tikzcd}
{[}\G^{op},\mathbf{Set}{]} \ar[r,bend left,"{\iota_n}^*"]\ar[r, phantom, "\perp" description]& {[}\G_n^{op},\mathbf{Set}{]} \ar[l,bend left, "{\iota_n}_*"]
\end{tikzcd}
\end{equation}
 as before.
Explicitely, we have:
\[
(\iota^*_n X)_k=X_k \ \text{for every} \ k\leq n \] and
\[({\iota_n}_*X)_k=\begin{cases}
X_k & k\leq n\\
\{(x,y)\in X_{k-1}\times X_{k-1} \colon x \parallelsum y\}& k >n 
\end{cases}
\] 
Informally, in ${\iota_n}_*X$ there is exactly one $(k+1)$-cell between any pair of parallel $k$-cells whenever $k\geq n$.
\begin{defi}
A globular set $X$ is \emph{$n$-coskeletal} if the unit of the adjunction $\iota^*_n\dashv {\iota_n }_*$ is an isomorphism at $X$, i.e. $\eta_X\colon X \overset{\cong}{\longrightarrow} {\iota_n }_* \iota^*_n X$.
\end{defi}
Just like we did above, we record the following straightforward result.
\begin{prop}
	\label{cosk eq}
The adjunction in \eqref{cosk adj} restricts to an equivalence of categories between the category of $n$-coskeletal globular sets and that of $n$-globular sets.
\end{prop}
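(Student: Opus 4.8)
The plan is to recognize the pair $(\iota_n^*, {\iota_n}_*)$ as a coreflection and to run the formal argument for idempotent adjunctions, exactly dual to the one behind Proposition \ref{base case}. The key structural input is that $\iota_n \colon \G_n \to \G$ is a fully faithful inclusion, which forces the right Kan extension ${\iota_n}_* = \mathrm{Ran}_{\iota_n}$ to be fully faithful as a functor, equivalently makes the counit $\epsilon \colon \iota_n^* {\iota_n}_* \Rightarrow \mathrm{id}_{[\G_n^{op},\mathbf{Set}]}$ of the adjunction \eqref{cosk adj} a natural isomorphism.

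First I would establish that $\epsilon$ is an isomorphism directly from the explicit description of ${\iota_n}_*$ recorded above: for every $n$-globular set $Y$ and every level $k \leq n$ one has $({\iota_n}_* Y)_k = Y_k$, with the original structure maps, and since $\iota_n^*$ merely reads off the levels $k \leq n$, we obtain $(\iota_n^* {\iota_n}_* Y)_k = Y_k$ naturally in $Y$; thus $\iota_n^* {\iota_n}_* Y \cong Y$. Next I would check that ${\iota_n}_* Y$ is always $n$-coskeletal, which is now a purely formal consequence: by the triangle identity ${\iota_n}_*(\epsilon_Y) \circ \eta_{{\iota_n}_* Y} = \mathrm{id}_{{\iota_n}_* Y}$, and since $\epsilon_Y$ is an isomorphism so is ${\iota_n}_*(\epsilon_Y)$, whence $\eta_{{\iota_n}_* Y}$ is an isomorphism, i.e.\ ${\iota_n}_* Y$ lies in the coskeletal subcategory.

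To conclude, recall that by definition the unit $\eta_X \colon X \to {\iota_n}_* \iota_n^* X$ is an isomorphism precisely when $X$ is $n$-coskeletal, so $\iota_n^*$ and ${\iota_n}_*$ restrict to a pair of functors between $n$-coskeletal globular sets and $n$-globular sets. The two families of isomorphisms just produced — $\eta$ on the coskeletal side and $\epsilon$ on the $n$-globular side — then exhibit these restricted functors as mutually quasi-inverse, yielding the desired equivalence of categories.

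The only step requiring genuine (though light) verification is the computation that the counit is an isomorphism, equivalently that ${\iota_n}_*$ is fully faithful; everything else is formal nonsense about coreflective subcategories. I expect no real obstacle: the argument is strictly dual to Proposition \ref{base case}, the sole difference being that ${\iota_n}_*$ glues in a single $(k+1)$-cell between each parallel pair of $k$-cells for $k \geq n$, rather than collapsing the higher cells, and this is precisely what the displayed formula for ${\iota_n}_*$ encodes.
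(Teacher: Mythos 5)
Your argument is correct and is exactly the standard coreflection argument the paper has in mind: the paper records this proposition without proof, calling it straightforward, and your verification that the counit is an isomorphism (from full faithfulness of $\iota_n$, or directly from the displayed formula for ${\iota_n}_*$) together with the triangle-identity argument showing ${\iota_n}_*Y$ is always $n$-coskeletal is precisely the intended justification.
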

\begin{defi}
	Given a globular theory $\C$, we say that a $\C$-model $X$ is $n$-coskeletal if its underlying globular set is such. We denote the full subcategory of $\C$-models spanned by $n$-coskeletal objects by $\mathbf{Mod}(\C)_{cosk_n}$.
\end{defi}
We can give the following useful characterization for the classes of $n$-coskeletal globular sets.
\begin{prop}
	\label{cosk as orthogonal class}
A globular set $X$ is $n$-coskeletal if and only if for each pair of parallel $k$-cells $x\parallelsum y$ in $X$ with $k\geq n$ one has:
\[X(x,y)=\mathbf{1}\] where $X(x,y)$ is the globular set whose $m$-cells are $(m+k+1)$-cells in $X$ whose $k$-dimensional boundary is given by $(x,y)$, and $\mathbf{1}$ denotes the terminal globular set.
		
Equivalently, the category of $n$-coskeletal globular sets consists of the full subcategory of the category of globular sets spanned by the objects which are orthogonal to the set of maps $\{S^{k-1}\rightarrow D_k\}_{k\geq n+1}$.
\end{prop}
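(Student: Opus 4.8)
The plan is to unwind the definition of $n$-coskeletality (the unit of $\iota_n^*\dashv{\iota_n}_*$ being an isomorphism) into a dimensionwise bijection, and then match that bijection against each of the two stated conditions. First I would compute the coskeleton in every dimension $k>n$. The key observation is that the representable $D_k$ has exactly two cells in each dimension below $k$ (an iterated source and an iterated target), so its restriction $\iota_n^* D_k$ is independent of $k$ once $k>n$ and coincides with the $n$-truncated sphere $\iota_n^*(\partial D_{n+1})$, which represents parallel pairs of $n$-cells. Using the adjunction together with $X_k\cong[\G^{op},\mathbf{Set}](D_k,X)$, this yields for every $k>n$ a natural identification
\[({\iota_n}_*\iota_n^*X)_k\cong\{(x,y)\in X_n\times X_n\colon x\parallelsum y\},\]
under which the unit $\eta_X$ sends a $k$-cell to its $n$-dimensional boundary. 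Hence $X$ is $n$-coskeletal if and only if, for every $k>n$, the $n$-boundary map $X_k\to\{(x,y)\in X_n\times X_n\colon x\parallelsum y\}$ is a bijection.

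Next I would identify this with orthogonality to $\{S^{k-1}\to D_k\}_{k\geq n+1}$. Since $D_k$ represents $k$-cells and $S^{k-1}=\partial D_k$ represents parallel pairs of $(k-1)$-cells, orthogonality to $S^{k-1}\to D_k$ says precisely that the source–target map $X_k\to\{(u,v)\in X_{k-1}\times X_{k-1}\colon u\parallelsum v\}$ is a bijection for each $k\geq n+1$, i.e. that every parallel pair of $j$-cells with $j\geq n$ admits a unique filler. I would prove the equivalence with the condition of the first paragraph by induction on dimension: the case $k=n+1$ is literally the same statement, and for the inductive step the crucial point is that once the $n$-boundary map is bijective in dimension $k>n$, any two parallel $k$-cells have equal $n$-boundary and therefore coincide. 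This collapses the difference between a one-step filler and determination by the full $n$-boundary, which is exactly what lets one pass from dimension $k$ to dimension $k+1$.

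Finally I would deduce the description through $X(x,y)$. Fixing parallel $k$-cells $x\parallelsum y$ with $k\geq n$, the $0$-cells of $X(x,y)$ are the $(k+1)$-cells filling $(x,y)$, of which there is exactly one by orthogonality; more generally an $(m+1)$-cell of $X(x,y)$ between two parallel $m$-cells is a filler in $X$ of a parallel pair living in dimension $\geq n+1$, hence unique. Thus $X(x,y)$ is a globular set with a single $0$-cell in which every parallel pair admits a unique filler, and a one-line induction on dimension shows that any such globular set is terminal, so $X(x,y)=\mathbf{1}$. Conversely, $X(x,y)=\mathbf{1}$ returns at once the existence and uniqueness of the required fillers, giving back orthogonality.

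The hard part will be the bookkeeping in the first two paragraphs: correctly identifying the coskeleton in all dimensions $>n$, where every level collapses to the same set of parallel $n$-cell pairs, and running the induction that reconciles the ``$n$-boundary'' description coming from the coskeleton with the one-step filler condition built into orthogonality. The conceptual heart of that induction is the lemma that in a coskeletal object all parallel cells of dimension $>n$ coincide; once this is isolated, the remaining verifications are routine.
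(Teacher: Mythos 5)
Your proof is correct. The paper gives no proof of this proposition at all (it is treated as routine, on par with Proposition \ref{cosk eq}), and your argument is exactly the natural elaboration of what the paper implicitly relies on: the displayed formula for $({\iota_n}_*X)_k$ stated just before the proposition, together with the observation that parallel cells in dimension $>n$ of a coskeletal object coincide, which collapses all levels above $n$ to the set of parallel pairs of $n$-cells and reconciles the one-step filler condition with the $n$-boundary description.
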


\begin{prop}
\label{n-cosk models}
There is an equivalence of categories between $n$-coskeletal $\C$-models and $\C^{\leq n}$-models, i.e.
\[\mathbf{Mod}(\C)_{cosk_n}\simeq \mathbf{Mod}(\C^{\leq n})\] 
Moreover, this adjunction is compatible with the one depicted in \eqref{cosk adj} at the level of underlying globular sets.
\end{prop}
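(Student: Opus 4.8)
The plan is to reproduce, for the coskeletal case, the inductive argument just given for the $n$-truncated case, the only (but crucial) difference being that where $n$-truncation forced the theory to \emph{equalize} the top-dimensional operations, $n$-coskeletality instead forces the higher fillers to \emph{exist uniquely}. Concretely, I would prove by induction on $k$ that there is an equivalence $\mathbf{Mod}(\C_k)_{cosk_n}\simeq \mathbf{Mod}(\C^{\leq n}_k)$, where the tower $\C^{\leq n}_\bullet$ is extended by identities for $k\geq n$ (so that $\C^{\leq n}_k=\C^{\leq n}_n=\C^{\leq n}$ for all $k\geq n$), and where these equivalences are compatible with the restriction functors induced by the two towers. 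Passing to the colimit $\C=\colim_k\C_k$ then gives $\mathbf{Mod}(\C)_{cosk_n}=\lim_k \mathbf{Mod}(\C_k)_{cosk_n}\simeq \lim_k \mathbf{Mod}(\C^{\leq n}_k)=\mathbf{Mod}(\C^{\leq n})$, the last equality holding because the right-hand tower is eventually constant.

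For the base case $k=0$ one has $\C_0=\Theta_0$ and $\C^{\leq n}_0=\Tn$, so the claim is exactly the equivalence between $n$-coskeletal globular sets and $n$-globular sets of Proposition \ref{cosk eq}. For the inductive step with $k<n$, I would invoke the universal property of $\C_{k+1}=\C_k[A_k]$ (Proposition \ref{univ prop of glob th}): an $n$-coskeletal $\C_{k+1}$-model $X$ is an $n$-coskeletal $\C_k$-model equipped, for each pair $(h_1,h_2)\colon \partial D_{k+1}\rightarrow B$ in $A_k$, with a filler $X(\rho)\colon X(B)\rightarrow X_{k+1}$ satisfying $s(X(\rho))=X(h_1)$ and $t(X(\rho))=X(h_2)$. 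Since $k+1\leq n$, these fillers are genuine cells that are untouched by the coskeletality condition, so by the inductive hypothesis this datum is precisely that of a $\C^{\leq n}_k$-model together with interpretations of the fillers $A'_k$, i.e.\ a $\C^{\leq n}_{k+1}$-model.

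The crucial step, and the point where the argument departs from the truncated one, is the inductive step for $k\geq n$. Here $A_k$ adds operations whose values are $(k+1)$-cells with $k+1>n$. Given an $n$-coskeletal $\C_k$-model $X$ and a pair $(h_1,h_2)\colon \partial D_{k+1}\rightarrow B$ in $A_k$, the maps $X(h_1),X(h_2)\colon X(B)\rightarrow X_k$ take parallel values (because $h_1\parallelsum h_2$ in the theory), so by the orthogonality characterization of Proposition \ref{cosk as orthogonal class} there is, for each $b\in X(B)$, a \emph{unique} $(k+1)$-cell with source $X(h_1)(b)$ and target $X(h_2)(b)$. Hence the filler $X(\rho)$ exists and is uniquely determined by the underlying globular set, so adjoining $A_k$ does not alter the category of $n$-coskeletal models and $\mathbf{Mod}(\C_{k+1})_{cosk_n}\simeq \mathbf{Mod}(\C_k)_{cosk_n}\simeq \mathbf{Mod}(\C^{\leq n}_k)=\mathbf{Mod}(\C^{\leq n}_{k+1})$. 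The asserted compatibility with the adjunction \eqref{cosk adj} is then immediate, since every equivalence produced above leaves the underlying $n$-globular set unchanged, the coskeletal fillers above dimension $n$ being exactly those reconstructed by ${\iota_n}_*$.

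The main obstacle I expect lies precisely in making the $k\geq n$ step fully rigorous: one must check not merely that each forced filler $X(\rho)$ exists and is unique pointwise, but that this canonical choice is natural in $X(B)$ and coheres with all the structural maps and globular identities of $\C_{k+1}$, so that restriction along $\C_k\hookrightarrow\C_{k+1}$ is a genuine equivalence of categories of coskeletal models (essentially surjective \emph{and} fully faithful) rather than a mere bijection on objects. Framing the whole argument through the orthogonality description of Proposition \ref{cosk as orthogonal class} — coskeletal models being exactly those orthogonal to $\{S^{k-1}\rightarrow D_k\}_{k\geq n+1}$ — is what should make this uniqueness and naturality automatic, and is the route I would take to discharge the difficulty cleanly.
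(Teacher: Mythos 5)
Your proposal is correct and follows essentially the same route as the paper's proof: induction on the stage of the defining towers, with the base case given by Proposition \ref{cosk eq}, the step $k<n$ handled by the universal property of $\C_k[A_k]$, and the step $k\geq n$ discharged by the unique sphere-filling property of Proposition \ref{cosk as orthogonal class}. The extra care you take about the colimit over the tower and the naturality of the canonical fillers (automatic from their uniqueness) only makes explicit what the paper leaves implicit.
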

\begin{proof}
We prove the statement by induction on the stage $k$ of their respective defining towers, i.e. we prove that $\mathbf{Mod}(\C_k)_{cosk_n}\simeq \mathbf{Mod}(\C^{\leq n}_k)$ for every $k$ (where we extend the tower of $n$-globular theories with identities). The case $k=0$ has been proven in Proposition \ref{cosk eq}. 

Now suppose it holds for $0<k<n$ and let's prove it for $k+1$. Observe that, by definition, a $\C_{k+1}$ model $X$ precisely corresponds to a $\C_k$ model together with an interpretation $X(\rho)\colon X(A)\rightarrow X(k+1)$ of every map $\rho$ added as a filler of $(h_1,h_2)\colon \partial D_{k+1} \rightarrow A$ with $(h_1,h_2) \in A_k$, satisfying the property that $X(\sigma)\circ X(\rho)=X(h_1)$ and $X(\tau)\circ X(\rho)=X(h_2)$. By inductive hypothesis, if $X$ is $n$-coskeletal, then it corresponds to a $\C^{\leq n}_k$-model with interpretation for every filler of pairs in $A'_k$, which is precisely a  $\C^{\leq n}_{k+1}$-model.

To conclude the proof, it is enough to show that, for every $k\geq n$, the category of $n$-coskeletal $\C_k$-models and that of $n$-coskeletal $\C_{k+1}$-models are equivalent. Given any $(h_1,h_2)\colon \partial D_{k+1} \rightarrow A$ in $A_k$, we have to define a map $X(\rho)\colon X(A)\rightarrow X(k+1)$ satisfying the property that $X(\sigma)\circ X(\rho)=X(h_1)$ and $X(\tau) \circ X(\rho)=X(h_2)$. This is accomplished by observing that, by Proposition \ref{cosk as orthogonal class}, $X$ admits unique fillers of $k$-spheres.
\end{proof}
\begin{rmk}
It is possible to build a coherator $\C$ for $\infty$-groupoids given a coherator $\C'$ for $n$-groupoids, in such a way that the $n$-truncation of $\C$ described above coincides with $\C'$. The procedure, which we just sketch, goes as follows: the first $n$-steps of the tower for $\C$ corresponds to those for $\C'$, i.e. we choose the same pairs to which we add fillers. Then we define $\C_k$ for $k>n$ by adding fillers \emph{for all} possible pairs of parallel maps in $\C_{k-1}$. We leave the proof that this indeed produces a coherator for \(\infty\)-groupoids and that its \(n\)-truncation coincides with $\C'$ to the interested reader.
\end{rmk}
We now turn to examine how $n$-truncated and $n$-coskeletal objects are related to one another.
	\begin{lemma}
		Every $n$-truncated globular set is $(n+1)$-coskeletal.
	\end{lemma}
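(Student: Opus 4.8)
The plan is to reduce everything to the two cell-level characterizations already at hand: the characterization of $n$-truncated globular sets (the Lemma following Proposition \ref{base case}) and that of coskeletal objects in Proposition \ref{cosk as orthogonal class}. Concretely, I would show that the $n$-truncation condition --- $X(x,y)=\mathbf{1}$ if $x=y$ and $X(x,y)=\emptyset$ if $x\neq y$, for all $k$-cells $x,y$ with $k\geq n$ --- implies the $(n+1)$-coskeletality condition, namely $X(x,y)=\mathbf{1}$ for every parallel pair $x\parallelsum y$ of $k$-cells with $k\geq n+1$.

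The crux is the claim that, in an $n$-truncated globular set, any two parallel $k$-cells with $k\geq n+1$ must coincide. To prove it I would fix such a pair $x\parallelsum y$, let $a$ and $b$ denote their common $(k-1)$-source and $(k-1)$-target, and observe that $a\parallelsum b$ are $(k-1)$-cells with $k-1\geq n$. The $n$-truncation condition then applies to $(a,b)$, so $X(a,b)$ equals $\mathbf{1}$ or $\emptyset$ according as $a=b$ or $a\neq b$. Since the $0$-cells of $X(a,b)$ are exactly the $k$-cells whose $(k-1)$-boundary is $(a,b)$, and $x$ is such a cell, $X(a,b)$ is nonempty; hence $a=b$ and $X(a,b)=\mathbf{1}$. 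As the terminal globular set $\mathbf{1}$ has a unique $0$-cell and both $x$ and $y$ occur as $0$-cells of $X(a,b)$, this forces $x=y$.

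Granting the claim, the lemma follows immediately: for parallel $k$-cells $x\parallelsum y$ with $k\geq n+1$ we obtain $x=y$, and then the $n$-truncation condition in the case $x=y$ (available since $k\geq n$) gives $X(x,y)=X(x,x)=\mathbf{1}$, which is precisely the condition of Proposition \ref{cosk as orthogonal class} characterizing $(n+1)$-coskeletal globular sets.

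I expect the only genuine content to be the collapsing step of the second paragraph; the rest is bookkeeping matching the two characterizations. The one subtlety worth flagging is the dimension shift: truncation is invoked one dimension below ($k-1\geq n$) the dimension at which coskeletality is tested ($k\geq n+1$), and it is exactly this shift that turns $n$-truncation into $(n+1)$-coskeletality rather than $n$-coskeletality. (If one instead prefers to argue via the units of the two adjunctions directly, the same observation shows that the comparison map ${\iota_n}_{*}\iota_n^{*}X\to{\iota_{n+1}}_{*}\iota_{n+1}^{*}X$ is an isomorphism on an $n$-truncated $X$, but the characterization route above is cleaner.)
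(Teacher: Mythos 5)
Your proposal is correct and follows essentially the same route as the paper: reduce to the two cell-level characterizations, show that parallelism forces equality in dimensions $\geq n+1$ via $n$-truncatedness, and conclude $X(x,y)=\mathbf{1}$. The paper's own proof is a one-line version of this (checking only dimension $n+1$ and leaving the higher dimensions and the ``parallel implies equal'' step implicit), so your write-up is just a more detailed rendering of the same argument.
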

	\begin{proof}
		Let $X$ be an $n$-truncated globular set. Thanks to the previous proposition, we have to show that $X(x,y)=\mathbf{1}$ for every pair of parallel $(n+1)$-cells $x,y$, which is true since $x\parallelsum y$ implies $x=y$ by $n$-truncatedness of $X$.
	\end{proof}
In particular, the adjunction in \eqref{truncation adj} restricts to one of the form:
\begin{equation}
\label{tr-cosk adj}
\begin{tikzcd}
{[}\G^{op},\mathbf{Set}{]}_{cosk_{(n+1)}} \ar[r,bend left,"{t_n}_!"] \ar[r, phantom, "\perp" description, near start]& {[}\G^{op},\mathbf{Set}{]} _{n-tr}\ar[l,bend left, "t_n^*"]
\end{tikzcd}
\end{equation}

where the category on the left denotes the full subcategory of globular sets spanned by $(n+1)$-coskeletal globular sets, and that on the right denotes the one spanned by $n$-truncated globular sets. It is actually possible to extend further this adjunction, as we record here below.
\begin{prop}
There is an adjunction of the form:
\begin{equation}
\label{tr-cosk adj on models}
\begin{tikzcd}
\mathbf{Mod}(\C)_{cosk_{(n+1)}} \ar[r,bend left,"{t_n}"]\ar[r, phantom, "\perp" description, near start]   &\mathbf{Mod}(\C)_{n-tr} \ar[l,bend left, "i_n"]
\end{tikzcd}
\end{equation}

which is compatible with the one depicted in \eqref{tr-cosk adj} at the level of underlying globular sets.
\end{prop}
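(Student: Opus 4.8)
The plan is to exhibit $\mathbf{Mod}(\C)_{n-tr}$ as a reflective subcategory of $\mathbf{Mod}(\C)_{cosk_{(n+1)}}$, with $i_n$ the inclusion and $t_n$ the reflection. First I would check that $i_n$ makes sense: by the preceding lemma every $n$-truncated globular set is $(n+1)$-coskeletal, so the underlying globular set of an $n$-truncated $\C$-model is $(n+1)$-coskeletal, and hence every $n$-truncated $\C$-model is in particular an $(n+1)$-coskeletal one. Thus $i_n$ is simply the inclusion of one full subcategory of $\mathbf{Mod}(\C)$ into another, and is automatically fully faithful. On underlying globular sets it agrees with $t_n^*$ restricted to $n$-truncated objects, which under the identification of Proposition \ref{base case} is exactly the inclusion of $n$-truncated into $(n+1)$-coskeletal globular sets; this gives one half of the compatibility claim.

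Next I would construct the reflection $t_n$. Given an $(n+1)$-coskeletal $\C$-model $Y$, its underlying globular set has an $n$-truncation ${t_n}_!Y$, together with the unit $\eta_Y\colon Y \to t_n^*{t_n}_!Y$ of the adjunction \eqref{tr-cosk adj}; recall $({t_n}_!Y)_n = Y_n/\!\sim$, where $x\sim y$ precisely when there is an $(n+1)$-cell $x\to y$. I would equip ${t_n}_!Y$ with a $\C$-model structure by descending the operations of $Y$ along $\eta_Y$ and let $t_nY$ be the result; by construction its underlying globular set is ${t_n}_!Y$, which yields the other half of the compatibility statement.

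The heart of the matter is that every operation of the coherator descends to the quotient $Y_n/\!\sim$. For an operation $\mu$ of $\C$ valued in a dimension below $n$ there is nothing to check, since $Y$ and ${t_n}_!Y$ agree below dimension $n$; in dimensions above $n$ the target is $n$-truncated, hence coskeletal, so the value is forced. The only genuine content is for operations valued in dimension $n$: I must show that replacing an input $n$-cell $x$ by an $n$-cell $x'$ with $x\sim x'$ produces $\sim$-related outputs. This is exactly where contractibility of $\C$ is used: an $(n+1)$-cell $H\colon x\to x'$ is carried, via a whiskering $(n+1)$-operation supplied by contractibility, to an $(n+1)$-cell between $\mu(\dots x\dots)$ and $\mu(\dots x'\dots)$, so the outputs are $\sim$-related; iterating over the generators of $\sim$ shows $\mu$ is well defined on the quotient, and functoriality and compatibility with globular sums are inherited from $Y$ through the universal property of the quotient. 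I expect this descent step to be the main obstacle, as it is the only place combining the combinatorics of ${t_n}_!$ with the contractibility of the coherator. (Alternatively, one may transport the structure through the equivalences $\mathbf{Mod}(\C)_{cosk_{(n+1)}}\simeq\mathbf{Mod}(\C^{\leq n+1})$ of Proposition \ref{n-cosk models} and $\mathbf{Mod}(\C)_{n-tr}\simeq\mathbf{Mod}(\overline{\C}^{\leq n})$ of \eqref{truncation eq}, realizing $i_n$ as restriction along a morphism of $(n+1)$-globular theories that equalizes the top-dimensional operations, with $t_n$ the induced left adjoint via Proposition \ref{UP of models}.)

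Finally I would verify the adjunction $t_n\dashv i_n$. For an $n$-truncated $Z$, a $\C$-model morphism $t_nY\to Z$ is the same as a map of underlying globular sets ${t_n}_!Y\to Z$ commuting with all operations; by the adjunction \eqref{tr-cosk adj} such underlying maps correspond bijectively, via $f\mapsto t_n^*(f)\circ\eta_Y$, to maps $Y\to t_n^*Z = i_nZ$. Since the operations on $t_nY$ were defined as the descents along the $\C$-morphism $\eta_Y$, and $\eta_Y$ is the quotient map, operation-preservation of $f$ is equivalent to operation-preservation of $t_n^*(f)\circ\eta_Y$; hence the bijection restricts to a natural isomorphism $\mathbf{Mod}(\C)_{n-tr}(t_nY,Z)\cong\mathbf{Mod}(\C)_{cosk_{(n+1)}}(Y,i_nZ)$, as required.
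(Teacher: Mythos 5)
Your proposal is correct and follows essentially the same route as the paper: exhibit $\mathbf{Mod}(\C)_{n-tr}$ as reflective in $\mathbf{Mod}(\C)_{cosk_{(n+1)}}$, build $t_n$ by quotienting the $n$-cells and descending the operations along $\eta_X$, and deduce the adjunction from the underlying globular-set adjunction \eqref{tr-cosk adj} via surjectivity of $\eta_X$ in dimension $n$ and truncatedness of the target above $n$. The only notable difference is one of emphasis: you spell out the descent of $n$-valued operations to the quotient via whiskering cells supplied by contractibility, whereas the paper's explicit well-definedness check concerns the equalization of the pairs in $A_n$ (needed to land in $\overline{\C}^{\leq n}$-models) and leaves the representative-independence step implicit; both checks belong to the same verification.
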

\begin{proof}
By definition, given an \((n+1)\)-coskeletal \(\C\)-model \(X\), we have:
\[(t_n X)_k=\begin{cases}
X_k & k <n\\
X_n/\sim & k \geq n
\end{cases}\] where source and target maps in dimension above $n$ are given by identities. We know that $X$ is equivalently a $\C_{n+1}$-model thanks to Proposition \ref{n-cosk models}, since it is $(n+1)$-coskeletal. 
There is a unique structure of $\C_{n+1}$-model on $t_n X$ such that the natural map \(X\rightarrow t_n X\) is a morphism of \(\C_{n+1}\)-models (here, we are implicitely identifying \((n+1)\)-coskeletal \(\C\)-models and \(\C_{n+1}\)-models).
Indeed, a \(\C_{n+1}\)-model structure on \(t_n X\) is equivalent to that of a \(\overline{\C}^{\leq n}\)-model, since \(t_n X\) is \(n\)-truncated by definition. This can be obtained as follows: given an operation \(\rho\colon D_n \rightarrow A\) added in going from \(\C_{n-1}\) to \(\C_n\) (i.e. \(\rho\) belongs to \(A'_{n-1}\) in the previously established notation) we set \(t_n X (\rho)(x)\overset{def}{=}[X(\rho)(x)]\) for every map \(x\colon A \rightarrow t_n X\). The fact that this is well defined can be checked as follows: given any pair $(h,h')\colon \partial D_{n+1} \rightarrow A$ in $A_n$ with added filler $\rho$ in $\C_{n+1}$, we have $s(X(\rho)(x))=X(h)(x)$ and $t(X(\rho)(x))=X(h')(x)$, and therefore $X(h)(x)$ and $X(h')(x)$ are identified after applying the truncation functor. 

Let's prove that given any $f\colon X \rightarrow i_n Y$ there exists a unique map of $n$-truncated models $ \bar{f}\colon t_n X \rightarrow Y$ such that $i_n(\bar{f}) \circ \eta_x =f$. We know there is a unique map $\bar{f}$ between the underlying globular sets thanks to \eqref{tr-cosk adj}, so we only have to prove it preserves operations. It is enough to show it preserves the generating operations, i.e. the ones added as fillers in the tower associated with $\C$. In fact, it is enough to prove it for operations $\rho \colon D_k \rightarrow A$ with $\height(A)\leq k$. Consider the square
\[
\begin{tikzcd}
t_nX(A)\ar[r,"t_n X(\rho)"] \ar[d,"\bar{f}"{swap}] & t_nX(k) \ar[d,"\bar{f}"]\\
Y(A) \ar[r,"Y(\rho)"]&Y(k)
\end{tikzcd}
\]
It certainly commutes if $k<n$, since in that range of dimensions $t_n X$ coincides with $X$, and we already know that $f$ is a map of models. If $k=n$ then we can use the fact that the map of sets $(\eta_X)_A\colon X(A)\rightarrow t_n(X)(A)$ which sends an element $x\in X(A)$ to the equivalence class it represents, denoted by $[x]$, is surjective, and that  $t_n X(\rho)([x])=[X(\rho)(x)]$. It follows that the diagram commutes since $f$ is a map of models.

Finally, if $k>n$, then we have $(\bar{f}\circ t_nX(\rho))\circ \epsilon=\bar{f}\circ t_n X(\rho\circ \epsilon)=Y( \rho\circ \epsilon)\circ \bar{f}= (Y(\rho)\circ \bar{f})\circ \epsilon$ where $\epsilon=\sigma,\tau$. This implies that the square commutes since $Y$ is $n$-truncated, and concludes the proof.
\end{proof}
We want to use the functor:
\[\mathbf{Mod}(\C)_{cosk_{(n+1)}}\overset{t_n}{\longrightarrow} \mathbf{Mod}(\C)_{n-tr}\simeq \ngpd{n}\]
to define two classes of maps $\mathcal{W},\mathcal{W'}$ in $\mathbf{Mod}(\C)_{cosk_{(n+1)}}$. We define $\mathcal{W}$ to be the class of weak equivalences of $\infty$-groupoids between $(n+1)$-coskeletal $\C$-models, and $\mathcal{W'}$ is defined to be $t_n^{-1}(\mathcal{W}_n)$, where $\mathcal{W}_n$ is the class of weak equivalences of $n$-groupoids.
\begin{prop}
	\label{pi_n compatibility}
Given an $(n+1)$-coskeletal $\C$-model $X$ we have $\pi_k(X,a,b)\cong \pi_k(t_n X,a,b)$ for every $k\leq n$ and every pair of $(k-1)$-cells $a,b$ in $X$ (or, equivalently, in $t_n X$).

Similarly, its left adjoint $i_n$ also respects homotopy groups, i.e. $\pi_k(Y,a,b)\cong \pi_k(i_n Y,a,b)$ for every $k\leq n$.
\end{prop}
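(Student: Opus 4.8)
The plan is to exploit the fact that the homotopy set $\pi_k(X,a,b)$ depends only on the cells of $X$ in dimensions $k-1$, $k$ and $k+1$, together with the composition and coherence operations among them, so that for $k\leq n$ it is insensitive to the part of the structure that $t_n$ modifies. Recall from the construction of $t_n$ that the unit $\eta_X\colon X\rightarrow t_n X$ is the identity in dimensions $<n$ and, in dimension $n$, is the quotient map $X_n\twoheadrightarrow X_n/{\sim}$, where $x\sim y$ precisely when there is an $(n+1)$-cell $x\to y$; moreover $t_n X$ is $n$-truncated, so it carries no nontrivial cells above dimension $n$. Since $\eta_X$ is a morphism of $\C_{n+1}$-models, and hence of $\C$-models under the identification of Proposition \ref{n-cosk models}, it induces maps $(\eta_X)_*\colon \pi_k(X,a,b)\rightarrow \pi_k(t_n X,a,b)$ for every $k$, and I would show these are isomorphisms in the range $k\leq n$.

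First I would dispose of the range $k\leq n-2$: there the dimensions $k-1,k,k+1$ are all $<n$, on which $\eta_X$ is an isomorphism, so the induced map is manifestly a bijection. The two remaining cases are where the quotient intervenes. For $k=n-1$ the $(n-1)$- and $(n-2)$-cells agree under $\eta_X$, and two parallel $(n-1)$-cells $f,g$ are identified in $\pi_{n-1}$ exactly when they are joined by an $n$-cell; since source and target of $n$-cells descend to $X_n/{\sim}$ and every $n$-cell of $t_n X$ lifts to one in $X$ with the same $(n-1)$-boundary, the existence of a connecting $n$-cell is unchanged, so $(\eta_X)_*$ is a bijection. For $k=n$, the set $\pi_n(X,a,b)$ is by definition the set of $n$-cells $a\to b$ modulo $\sim$, which is exactly the subset of $(t_n X)_n=X_n/{\sim}$ cut out by the boundary $(a,b)$; as $t_n X$ is $n$-truncated it has no nontrivial $(n+1)$-cells, so $\pi_n(t_n X,a,b)$ is that same set of $n$-cells with no further quotient, and $(\eta_X)_*$ is again a bijection.

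It then remains to promote these bijections to isomorphisms of the relevant algebraic structure: whatever composition structure $\pi_k$ carries is induced by operations of the coherator of dimension $\leq n$, which $\eta_X$ preserves because it is a map of models; hence $(\eta_X)_*$ is an isomorphism and the first claim follows. The statement for $i_n$ is then formal: since $i_n$ is fully faithful, being the inclusion of the reflective subcategory of $n$-truncated models into $(n+1)$-coskeletal ones, the counit $t_n i_n Y\rightarrow Y$ is an isomorphism, and applying the first part to $X=i_n Y$ gives $\pi_k(i_n Y,a,b)\cong \pi_k(t_n i_n Y,a,b)\cong \pi_k(Y,a,b)$ for $k\leq n$. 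The main obstacle is the bookkeeping in the boundary cases $k=n-1$ and $k=n$, where the truncation quotient acts precisely in one of the three dimensions that $\pi_k$ sees; the key point making these go through is that the quotienting relation $\sim$ on $n$-cells coincides with the $(n+1)$-cell homotopy relation used to form $\pi_n$, and that it is compatible with taking sources and targets.
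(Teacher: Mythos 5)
Your proposal is correct and follows essentially the same route as the paper: split into the cases $k\leq n-2$, $k=n-1$, $k=n$, observe that $\eta_X$ is an isomorphism below dimension $n$, that the quotient on $n$-cells does not change the connecting-cell relation used to form $\pi_{n-1}$, and that $\pi_n$ of $X$ is by construction the set of $n$-cells of $t_nX$ with the given boundary and no further quotient. Your deduction of the $i_n$ statement from the first part via the isomorphic counit is a clean formal shortcut where the paper merely says the argument is analogous.
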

\begin{proof}
Since $(t_n X)_k=X_k$ for every $k\leq n-1$, the statement clearly holds for $k\leq n-2$. Now consider $\pi_{n-1}(X,a,b)$: this is the set $\{[H]\colon H\in X_{n-1}, H\colon a\rightarrow b\}$ where we are quotienting by the equivalence relation $H\sim H'$ if and only if there exists an $\alpha\colon H \rightarrow H'$. Clearly, this coincides with $\pi_{n-1}(t_n X,a,b)$, since quotienting $n$-cells of $X$ does not affect the equivalence relation.

Finally, since $n$-cells of $t_n X$ are $n$-cells of $X$ up to $(n+1)$-cells, we get:
\[\pi_n(X,a,b)\cong (t_n X (a,b))_0\cong \pi_n (t_n X a,b) \]
since, by definition, $t_n X$ has no non-identity $(n+1)$-cells, i.e. there exists $\gamma\colon H \rightarrow H'$ if and only if $H=H'$, when $H,H'$ are $n$-cells.

The proof for $i_n$ is entirely analogous and therefore it is left to the reader.
\end{proof}
Since $\pi_{n+1}(X,a,b)$ is always trivial for an $(n+1)$-coskeletal $\C$-model $X$, we have the following result.
\begin{cor}
The two classes of maps $\mathcal{W}$ and $\mathcal{W'}$ coincide.
\end{cor}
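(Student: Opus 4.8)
The plan is to compare $\mathcal{W}$ and $\mathcal{W}'$ one dimension at a time, using Proposition~\ref{pi_n compatibility} in the range $k \leq n$ and the $(n+1)$-coskeletal hypothesis in the range $k \geq n+1$. Fix a morphism $f \colon X \to Y$ of $(n+1)$-coskeletal $\C$-models. By definition $f \in \mathcal{W}$ exactly when it induces bijections on $\pi_k$ for every $k$ and every admissible choice of basepoints, whereas $f \in \mathcal{W}' = t_n^{-1}(\mathcal{W}_n)$ exactly when $t_n f$ induces such bijections for $k \leq n$, these being the only homotopy groups detected by the $n$-groupoids $t_n X$ and $t_n Y$.

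First I would treat the range $k \leq n$. The isomorphisms $\pi_k(X,a,b) \cong \pi_k(t_n X,a,b)$ supplied by Proposition~\ref{pi_n compatibility} are induced by the canonical comparison map $X \to t_n X$ and are therefore natural in $X$. Consequently the square relating $f_*$ on $\pi_k(X)$ to $(t_n f)_*$ on $\pi_k(t_n X)$ commutes, so $f_*$ is bijective throughout this range if and only if $(t_n f)_*$ is. Since $t_n$ is the identity on cells of dimension below $n$ and the defining quotient on $n$-cells, parallelism $a \parallelsum b$ of basepoints, and hence the indexing of the homotopy groups, is preserved, so the two basepoint conditions match exactly.

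Next I would dispose of the range $k \geq n+1$, showing that neither condition imposes anything there. For any $(n+1)$-coskeletal $\C$-model $Z$, the set $\pi_k(Z,a,b)$ is trivial for all $k \geq n+1$: the case $k = n+1$ is the triviality of $\pi_{n+1}$ recorded just above, since two parallel $(n+1)$-cells with a common boundary are joined by the unique $(n+2)$-cell guaranteed by $(n+1)$-coskeletality and are thereby identified; and for $k \geq n+2$ Proposition~\ref{cosk as orthogonal class} furnishes a unique $k$-cell over each parallel pair, so $\pi_k(Z,a,b)$ is a singleton (or empty). Applying this to both $X$ and $Y$, the map $f$ is automatically bijective on every $\pi_k$ with $k \geq n+1$.

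Combining the two ranges, $f$ lies in $\mathcal{W}$ if and only if $f_*$ is bijective on $\pi_k$ for all $k \leq n$ (the higher groups being forced), if and only if $(t_n f)_*$ is bijective on $\pi_k$ for all $k \leq n$, i.e. if and only if $f \in \mathcal{W}'$. I expect the only genuine subtlety to be the high-dimensional vanishing: for $k = n+1$ the $(n+1)$-cells themselves need not be unique, so the triviality of $\pi_{n+1}$ comes from the uniqueness of the next-dimensional fillers rather than from uniqueness at level $n+1$; only from $k \geq n+2$ onward does the filler itself become unique.
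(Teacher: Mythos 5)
Your proof is correct and follows essentially the same route as the paper: the paper derives the corollary immediately from Proposition~\ref{pi_n compatibility} (for $k\leq n$) together with the observation that $\pi_{n+1}$ --- and, by coskeletality, all higher homotopy groups --- of an $(n+1)$-coskeletal model are trivial. Your added remark distinguishing the $k=n+1$ case (triviality via unique $(n+2)$-fillers) from $k\geq n+2$ (unique cells themselves) is a correct and welcome elaboration of the same argument.
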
 
The following result allows to transfer the canonical left semi-model structure on \(\ngpd{n}\) (if it exists, see Proposition \ref{model str}) to a Quillen equivalent one on \((n+1)\)-coskeletal \(\infty\)-groupoids.
\begin{thm} \label{thm:Quillen_equiv_cosk_truncated}
Assuming the canonical left semi-model structure on \(\ngpd{n}\) exists, there exists a cofibrantly generated left semi-model structure on the category $\mathbf{Mod}(\C)_{cosk_{(n+1)}}$ of $(n+1)$-coskeletal $\infty$-groupoids. Moreover, the adjunction in \eqref{tr-cosk adj on models} is a Quillen equivalence with respect to these semi-model structures.
\end{thm}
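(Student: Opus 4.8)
The plan is to build the semi-model structure on $\mathbf{Mod}(\C)_{cosk_{(n+1)}}$ by \emph{transfer} along the left adjoint $t_n$ of the adjunction \eqref{tr-cosk adj on models}, with weak equivalences the class $\mathcal{W}=\mathcal{W}'$ (these coincide by the preceding corollary), and then to read off the Quillen equivalence almost formally. First I would fix explicit generators adapted to the coskeletal setting: the generating cofibrations $\{\partial D_m\to D_m\}_{0\le m\le n+1}$ and the generating trivial cofibrations $\{D_m\to D_{m+1}\}_{0\le m\le n}$, all computed inside $\mathbf{Mod}(\C)_{cosk_{(n+1)}}$. The point is that here the $(n+1)$-cells are genuinely free (there are no top-dimensional equations, which is precisely the content of the identification $\mathbf{Mod}(\C)_{cosk_{(n+1)}}\simeq\mathbf{Mod}(\C^{\leq n+1})$ of Proposition \ref{n-cosk models}), so $\partial D_{n+1}\to D_{n+1}$ and $D_n\to D_{n+1}$ are honest maps of representables; by orthogonality (Proposition \ref{cosk as orthogonal class}) the would-be generators in dimension $\ge n+2$ become isomorphisms and may be discarded. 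Since $\C^{\leq n+1}$-models form a locally presentable category the domains and codomains of these generators are small, and the small object argument is available.

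The key structural observation is that $t_n$ matches these generators with the canonical ones of $\ngpd{n}$. Being a left adjoint it preserves the relevant colimits; it sends each boundary inclusion $\partial D_m\to D_m$ with $m\le n$ to itself, and, because the top $(n+1)$-cell of $D_{n+1}$ collapses under truncation, it sends $\partial D_{n+1}\to D_{n+1}$ to the fold map $\partial D_{n+1}\to D_n$ that identifies a pair of parallel $n$-cells, which is a generating cofibration of the canonical structure of Proposition \ref{model str}; likewise it carries the source maps to the source maps of $\ngpd{n}$. I would then verify the hypotheses of a recognition theorem for cofibrantly generated left semi-model structures (e.g.\ \cite{Ba}) for this data. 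Two-out-of-three and the closure properties of $\mathcal{W}$ are inherited from $\ngpd{n}$ through $t_n$, since $\mathcal{W}=t_n^{-1}(\mathcal{W}_n)$ by definition of $\mathcal{W}'$. The substantial point is the acyclicity condition: every relative cell complex built from the source maps, with \emph{cofibrant} domain, must lie in $\mathcal{W}$. I would deduce this from the corresponding fact in $\ngpd{n}$ using that $t_n$ preserves cofibrant objects (it preserves the generating cofibrations, as just observed), preserves the pushouts defining such cell complexes, and creates weak equivalences: the image in $\ngpd{n}$ is then a transfinite composite of pushouts of source maps with cofibrant domain, hence a weak equivalence exactly by the hypothesis underlying the existence of the canonical structure (Proposition \ref{model str}).

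For the Quillen equivalence I would argue as follows. By the matching of generators, $t_n$ sends cofibrations to cofibrations and trivial cofibrations to trivial cofibrations, so $t_n\dashv i_n$ is a Quillen pair. The counit $t_n i_n\Rightarrow\mathrm{id}$ is an isomorphism because $i_n$ is fully faithful ($n$-truncated models form a reflective subcategory of the $(n+1)$-coskeletal ones, with reflector $t_n$ and inclusion $i_n$); hence the derived counit is a weak equivalence and $i_n$ reflects weak equivalences between fibrant objects. It then remains to check the derived unit: for every cofibrant $X$ the unit $\eta_X\colon X\to i_n t_n X$ should be a weak equivalence. This is exactly where Proposition \ref{pi_n compatibility} enters, since both $t_n$ and $i_n$ preserve the homotopy groups $\pi_k$ for $k\le n$ while $\pi_{n+1}$ vanishes on $(n+1)$-coskeletal objects, so $\eta_X$ induces isomorphisms on all homotopy groups and thus lies in $\mathcal{W}$. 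As $i_n$ also sends weak equivalences to weak equivalences (again by Proposition \ref{pi_n compatibility}), post-composing with a fibrant replacement of $t_n X$ keeps the derived unit in $\mathcal{W}$, and the standard semi-model-categorical criterion yields that $t_n\dashv i_n$ is a Quillen equivalence.

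The hard part will be the acyclicity verification of the second paragraph: making precise that $t_n$ transports a cell complex of source maps in $\mathbf{Mod}(\C)_{cosk_{(n+1)}}$ to one in $\ngpd{n}$ over a cofibrant object, compatibly with the cofibrancy bookkeeping that a left semi-model structure demands (all factorizations and lifts being asserted only over cofibrant domains). Once this transfer of acyclicity is pinned down, the recognition of the cofibrantly generated left semi-model structure and the Quillen equivalence are formal, given the corollary identifying $\mathcal{W}$ with $\mathcal{W}'$ and the homotopy-group compatibility of Proposition \ref{pi_n compatibility}.
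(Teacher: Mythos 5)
Your overall strategy (take $\mathcal{W}=\mathcal{W}'=t_n^{-1}(\mathcal{W}_n)$, exhibit generating sets matched by $t_n$ with the canonical ones, and deduce the Quillen equivalence from Proposition \ref{pi_n compatibility} together with the isomorphic counit) is essentially the paper's, but your set of generating trivial cofibrations is wrong, and this is a genuine gap rather than a bookkeeping issue. The paper takes $J=\{\sigma_k\colon D_k\to D_{k+1}\}_{k\le n}\cup\{\sigma\colon D_{n+1}\to S^{n+1}\}$, where the last map is the coskeletal image of $\sigma_{n+1}\colon D_{n+1}\to D_{n+2}$; you drop this last generator. It cannot be dropped: without it the class $J\text{-inj}\cap\mathcal{W}$ strictly contains $I\text{-inj}$, so the axiom that a fibration which is a weak equivalence is a trivial fibration fails (equivalently, the small object argument on your $J$ does not produce genuine fibrations, i.e.\ maps lifting against \emph{all} trivial cofibrations with cofibrant domain, of which $D_{n+1}\to S^{n+1}$ is one). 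Concretely, for $n=0$: let $E\mathbb{Z}$ be the $0$-coskeleton of the set $\mathbb{Z}$, let $B\mathbb{Z}$ be the $1$-coskeletal model with one object and $\mathbb{Z}$ as its set of $1$-cells, and let $p\colon E\mathbb{Z}\to B\mathbb{Z}$ send the unique $1$-cell $(m,m')$ to $m'-m$. Both objects have trivial $\pi_0$ and, being $1$-coskeletal, trivial higher homotopy groups, so $p\in\mathcal{W}$; moreover $p$ lifts against $D_0\to D_1$ (lift $k$ at the object $m$ to $(m,m+k)$), so $p$ lies in the injectivity class of your $J$; but $p$ does not lift against $S^{0}\to D_1$, since the unique $1$-cell from $m$ to $m'$ maps only to $m'-m$. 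The extra generator $D_1\to S^{1}$ is exactly what excludes such maps from being fibrations.

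Relatedly, you verify only the acyclicity half of the recognition argument ($J$-cell complexes with cofibrant domain lie in $\mathcal{W}$, which does transfer along $t_n$ as you say) but never address the other compatibility condition $I\text{-inj}=J\text{-inj}\cap\mathcal{W}$, which is where the real content lies; the paper handles it by observing that fibrations and trivial fibrations of coskeletal models are detected at the level of $\infty$-groupoids and invoking the argument of Lemma 3.6 of \cite{EL2}. Once the generator $D_{n+1}\to S^{n+1}$ is restored and this condition is checked, the rest of your argument (matching of generators under $t_n$, acyclicity by transfer, and the Quillen equivalence via Proposition \ref{pi_n compatibility} and the isomorphism $t_n(\eta_X)\cong\mathrm{Id}$) goes through and coincides with the paper's proof.
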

\begin{proof}
We define the set of generating cofibrations to be $I=\{S^{k-1}\rightarrow D_k\}_{k\leq n}$ and that of generating trivial cofibrations to be $J=\{\sigma_k\colon D_k\rightarrow D_{k+1}\}_{k\leq n}\cup\{\sigma\colon D_{n+1}\rightarrow S^{n+1}\}$. Here, we actually mean the image of these maps in $\mathbf{Mod}(\C)$ under the reflection functor $\iota_n^*$, so that the last one is the image of $\sigma_{n+1}\colon D_{n+1}\rightarrow D_{n+2}$. The class of weak equivalences is defined to be $\mathcal{W}$, and we observe that $t_n$ sends generating cofibrations (resp. trivial cofibrations) to cofibrations (resp. trivial cofibrations). Moreover, since $\mathcal{W}=\mathcal{W'}$, the only non trivial fact that we have to check is that $J-inj\cap \mathcal{W}=I-inj$, where $K-inj$ denote the class of arrows with the right lifting property with respect to a given class of arrows $K$. It is clear that a map in $\mathbf{Mod}(\C)_{cosk_{(n+1)}}$ is a fibration (resp. trivial fibration) if and only if is such as a map of $\infty$-groupoids, so we can use the argument for $\infty$-groupoids to show that a fibration is a weak equivalence if and only if it is a trivial fibration (see, for instance, Lemma 3.6 in \cite{EL2}).

Therefore, $t_n$ is a left Quillen functor, and thanks to Proposition \ref{pi_n compatibility}, we have that both $t_n$ and $i_n$ preserve and detect weak equivalences. This is enough to conclude that the adjunction is indeed a Quillen equivalence, since we have $t_n(\eta_X\colon X \rightarrow i_n \circ t_n X) \cong \mathbf{Id}_{t_n X}$, which implies that $\eta_X$ is a weak equivalence in $\mathbf{Mod}(\C)_{cosk_{(n+1)}}$. The counit happens to be an isomorphism for every $n$-groupoid $Y$ so this concludes the proof.
\end{proof}
\begin{rmk}
We observe that the existence of a left semi-model structure on $\ngpd{n}$ as described is not necessary to prove that the two homotopy theories modeled by, respectively, $(n+1)$-coskeletal $\infty$-groupoids and $n$-groupoids, are equivalent. Indeed, we still get an adjunction of relative categories of the form:
\begin{equation}
\begin{tikzcd}
(\mathbf{Mod}(\C)_{cosk_{(n+1)}},\mathcal{W}) \ar[r,bend left,"t_n"] \ar[r, phantom, "\perp" description, very near start]& (\ngpd{n},\mathcal{W}_n) \ar[l,bend left, "i_n"]
\end{tikzcd}
\end{equation}

with the property that both the unit and the counit are weak equivalences. This is enough to prove that this adjunction induces an equivalence of the $(\infty,1)$-categories associated with these two relative categories. In fact, this follows directly from Proposition 7.1.14 of \cite{Cis}.
\end{rmk}

\section{Preliminaries on cylinder categories}

\label{sec:cylinder_cat}

In this section we review the theory of cylinder and pre-cylinder categories developed in \cite{Hen}.

\begin{defi} \label{def:pre-cylidner}
  A pre-cylinder category is a small\footnote{Though we will also occasionally consider some large pre-cylinder category.} category $\Ccal$ with a class of arrows called cofibrations and a class of arrows called weak equivalences, satisfying the following axioms:

  \begin{enumerate}
  \item Cofibrations and equivalences contain isomorphisms and are stable under compositions.
  \item Weak equivalences satisfies $2$-out-of-$6$, i.e. if $f g$ and  $g h$ are equivalences then $f,g,h$ and $f g h$ are equivalences as well.
  \item $\Ccal$ has an initial objects and every object is cofibrant.
  \item Pushout of cofibrations exists and are cofibrations.
  \item Given a diagram:

 \[   \begin{tikzcd}
      C \ar[d, "\sim"] & A \ar[hook,r] \ar[d, "\sim"]  \ar[l] & B \ar[d, "\sim"] \\
      C' & A' \ar[hook,r] \ar[l] & B'
 \end{tikzcd}
 \] 

where the vertical maps are weak equivalences and the hooked arrows are cofibrations, the induced map $C \coprod_A B \rightarrow C' \coprod_{A'} B'$ is a weak equivalence as well.
\end{enumerate}
\end{defi}

Morphisms of pre-cylinder categories are the functors that preserve cofibrations, weak equivalences, the initial object and pushout of cofibrations. Pre-cylinder categories form a $2$-category, that, in an appropriate $2$-categorical sense, has all limits and colimits (it is a presentable $2$-category). It also has a rich theory of ``freely constructed pre-cylinder categories'' (also to be interpreted in a $2$-categorical sense) for which we refer to section 3.2 of \cite{Hen}. Here is an example we will use frequently, a more general description of free object will be discussed in remark \ref{rk:Description_of_Free_pre_cyl_cat}:

\begin{constr} The free pre-cylinder category on an object $*$, denoted $F_*$, is the category of finite sets, where cofibrations are the monomorphisms and weak equivalences are the isomorphisms, and the object $*$ corresponds to the singleton. Indeed, a morphism $X: F_* \rightarrow \Ccal$ is entirely determined by the image of $*$, and  the image of a general finite set $S$ is given by:

\[ X(S) = \coprod_{s \in S } X(*) \]
\end{constr}

\begin{defi} A cylinder category is a pre-cylinder category in which:

\begin{enumerate}
\item Every trivial cofibration $j$ admits a retraction $r$, i.e. a map such that $r j = Id$.

\item Every object $X$ admits a cylinder object, i.e. a cofibration/weak equivalence factorization:

\[ X \coprod X \hookrightarrow IX \overset{\sim}{\rightarrow } X \]

of its co-diagonal map.

\end{enumerate}

\end{defi}

Note that this is essentially the same as a Brown category of cofibrant objects, up to two differences: the requirement that trivial cofibrations have a retraction, and the fact that we are requiring the $2$-out-of-$6$ condition instead of the weaker $2$-out-of-$3$ condition. Our cylinder categories are exactly the opposite of the path categories of \cite{VDB}.

\begin{ex}
If $M$ is a left semi-model category (or a Quillen model category) in which every object is fibrant then the category $Cof(M)$ of cofibrant objects of $M$, with its cofibrations and weak equivalences is a (large) cylinder category.
\end{ex}

\begin{rmk} \label{rk:equivalence_of_cylinders_cat} Morphisms of cylinder categories are the morphisms of the underlying pre-cylinder categories. Note that (as they are special case of Brown categories) there is a good notion of homotopy equivalence between two cylinder categories, i.e. morphisms that induce an equivalence of categories between the formal localizations of the two cylinder categories at their set of weak equivalences. Other characterizations of this notion of homotopy equivalences can be found in section 2.3 of \cite{Hen} (where they are called acyclic morphisms).
\end{rmk}

\begin{constr} \label{constr:Category_of_models}
Given a (small) pre-cylinder category $\Ccal$, one defines its completion, or category of models, $\Mod(\Ccal)$ to be the category of functors:

\[ \Ccal^{op} \rightarrow \mathbf{Set} \]
that send the initial object to the singleton and pushouts along cofibration to pullbacks of sets.

The Yoneda embedding $\Ccal \rightarrow \Mod(\Ccal)$ defines a functor that sends the initial object to the initial object and commutes with pushouts along cofibrations. We will always identify $\Ccal$ with its image under this functor.

We consider the weak factorization system on $\Mod(\Ccal)$ generated by the image (under the Yoneda embedding) of the cofibrations in $\Ccal$. Therefore, the right class (called trivial fibrations) consists of the maps with the right lifting property against all cofibrations in $\Ccal$, and the left class is equivalently described as the class of maps with the left lifting property against all trivial fibrations. These are the retracts of transfinite compositions of pushouts of cofibrations in $\Ccal$. We will also use the term ``cofibrations'' to refer to the maps in the left class.

We can now formulate the first main result in \cite{Hen} (see \cite{Ba} for the notion of left/right (semi) model structure):
\end{constr}

\begin{thm} \label{thm:model_str_on_category_of_models}
 If $\Ccal$ is a cylinder category, then its category of models admits a left semi-model structure where:

\begin{itemize}

\item The cofibrations/trivial fibrations weak factorization system is generated by the cofibrations in $\Ccal$ as in \ref{constr:Category_of_models}.

\item The trivial cofibrations/fibrations weak factorization system is generated by the trivial cofibrations in $\Ccal$.

\item Every object of $\Mod(\Ccal)$ is fibrant.

\end{itemize}

If $F: \Ccal \rightarrow \Dcal$ is a morphism of pre-cylinder categories between two cylinder categories, then it induces a Quillen adjunction:

\[ F_! : \Mod(\Ccal) \leftrightarrows \Mod(\Dcal) : F^* \]

Where $F^*$ is composition of functor with $F$ and $F_!$ is the left Kan extention of $F: \Ccal \rightarrow \Dcal \subset \Mod(\Dcal)$. If $F$ is a homotopy equivalence in the sense of \ref{rk:equivalence_of_cylinders_cat}, then this Quillen pair is a Quillen equivalence.

\end{thm}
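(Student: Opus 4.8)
The plan is to build the two weak factorization systems by hand, to define the weak equivalences through the homotopy calculus furnished by the cylinder objects, and then to verify the semi-model axioms, the non-formal content being concentrated in the comparison between the generated classes and the weak equivalences.

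First I would observe that $\Mod(\Ccal)$ is locally presentable: by Construction \ref{constr:Category_of_models} it is the full subcategory of $[\Ccal^{op},\mathbf{Set}]$ cut out by the conditions that the initial object is sent to the singleton and that pushouts along cofibrations are sent to pullbacks, and these are precisely the model conditions for a limit sketch on $\Ccal^{op}$. Hence $\Mod(\Ccal)$ is a reflective, accessibly embedded localization of a presheaf category, so the small object argument applies. I would then generate the (cofibration, trivial fibration) weak factorization system from the image of the cofibrations of $\Ccal$ under the Yoneda embedding, and the (trivial cofibration, fibration) system from the image of the trivial cofibrations of $\Ccal$, declaring fibrations and trivial fibrations to be the respective right classes. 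Because every trivial cofibration of $\Ccal$ is in particular a cofibration, every trivial cofibration in $\Mod(\Ccal)$ is a cofibration, and dually every trivial fibration is a fibration.

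Next I would define the weak equivalences. The cylinder objects supplied by the cylinder category structure on $\Ccal$, together with cocontinuity, produce cylinder objects $X\amalg X \hookrightarrow IX \overset{\sim}{\to} X$ for the cofibrant objects of $\Mod(\Ccal)$, built compatibly on cells, so that left homotopy of maps out of a cofibrant object is a well-behaved equivalence relation; since every object of $\Mod(\Ccal)$ is fibrant, I would call $f\colon X\to Y$ a weak equivalence when it becomes a homotopy equivalence after cofibrant replacement of source and target, equivalently when it induces a bijection on homotopy classes $[Z,X]\to[Z,Y]$ for every cofibrant $Z$. The retraction property of trivial cofibrations in $\Ccal$ is what guarantees that generating trivial cofibrations, and hence all trivial cofibrations, are weak equivalences, while the cylinder calculus yields the $2$-out-of-$3$ (indeed $2$-out-of-$6$) property of this class.

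The main obstacle, and the heart of the theorem, is to show that the two weak factorization systems are compatible with this class of weak equivalences: that a fibration is a weak equivalence exactly when it is a trivial fibration and, dually over cofibrant objects, that a cofibration is a weak equivalence exactly when it is a trivial cofibration. One inclusion follows from the previous paragraph; for the converse I would run the usual homotopical argument, but carefully restricted to cofibrant domains, which is precisely why one obtains a left semi-model structure rather than a Quillen model structure. Using $IX$ to form a mapping-cylinder factorization, a cofibration that is a weak equivalence is exhibited as a retract of a pushout of trivial cofibrations, hence has the left lifting property against all fibrations; this is where the cylinder category axioms (existence of $IX$ and of retractions for trivial cofibrations) and the fibrancy of every object are used in full. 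Assembling these facts yields the asserted left semi-model structure. For the functoriality statement, since $F$ is a morphism of pre-cylinder categories it preserves cofibrations, weak equivalences, the initial object and pushouts of cofibrations; by the universal property of $\Mod(\Ccal)$ the induced $F_!$ lands in $\Mod(\Dcal)$ and is left adjoint to $F^*$. On generators $F_!$ sends cofibrations to cofibrations and trivial cofibrations to trivial cofibrations, and since these classes are generated under colimits and retracts by the images of the generators, $F_!$ preserves them, so $(F_!,F^*)$ is a Quillen adjunction. Finally, if $F$ is a homotopy equivalence in the sense of Remark \ref{rk:equivalence_of_cylinders_cat}, I would show the derived unit and counit are weak equivalences on cofibrant objects; this reduces, via the fact that representables generate $\Mod(\Ccal)$ under homotopy colimits, to the assumption that $F$ already induces an equivalence between the localizations of $\Ccal$ and $\Dcal$, whence the Quillen adjunction is a Quillen equivalence.
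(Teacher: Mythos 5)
This theorem is not proved in the present paper: it is quoted verbatim from \cite{Hen}, so there is no in-paper proof to compare against; I will assess your outline against the argument of that reference. Your strategy is essentially the right one and matches \cite{Hen}: generate the two weak factorization systems by the small object argument (local presentability via the limit-sketch description is fine, and note that fibrancy of every object is an immediate consequence of the retraction axiom for trivial cofibrations, via $B \xrightarrow{r} A \to X$ --- worth saying explicitly since it is one of the two places the cylinder-category axioms enter); define weak equivalences by bijectivity on homotopy classes out of cofibrant objects; and reduce everything to the homotopy lifting/retract arguments. The functoriality and Quillen-adjunction statements are indeed formal once one observes that $F$ preserves cofibrations and trivial cofibrations of the generators.

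Two of your clauses carry essentially all of the non-formal content and are asserted rather than proved. First, ``cylinder objects \dots built compatibly on cells, so that left homotopy \dots is a well-behaved equivalence relation'': the needed statement is that every cofibration between cofibrant objects of $\Mod(\Ccal)$ admits a relative cylinder object obtained as an iterated pushout of the relative cylinders of the generating cofibrations. This is a genuine induction on cell attachments using axioms (4) and (5) of Definition \ref{def:pre-cylidner} (it is exactly the induction sketched in the proof of Lemma \ref{lem:cylinder_from_MS} of this paper), and without it neither the $2$-out-of-$6$ property of your class $\mathcal{W}$, nor the homotopy-lifting argument showing that a fibration which is a weak equivalence is a trivial fibration, nor the retract argument for trivial cofibrations with cofibrant domain, can be run. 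Second, the Quillen-equivalence step: ``representables generate $\Mod(\Ccal)$ under homotopy colimits'' is not by itself an argument. One should instead note that, every object being fibrant, $F^*$ preserves all weak equivalences by Ken Brown's lemma, then show that it also reflects them and that the derived unit is an equivalence on representables (where it reduces to the hypothesis that $F$ induces an equivalence of localizations), and finally propagate along cell attachments using the cube lemma. Neither point is a wrong turn --- they are the two lemmas on which \cite{Hen} spends its effort --- but as written your text names them without supplying them.
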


We can formulate some sort of converse result to this theorem, asserting that if the category of models has a model structure satisfying few additional conditions, then the model structure is obtained from the construction above. This will be useful later.
\begin{lemma} \label{lem:cylinder_from_MS}
Let $\Ccal$ be a pre-cylinder category with a set of generating cofibrations $I$, in the sense that every cofibration is a finite composite of pushout of arrows in $I$, and whose class of weak equivalences is generated by a set $J$ of trivial cofibrations, in the sense that it is the smallest class of equivalences satisfying definition \ref{def:pre-cylidner} and containing $J$. Furthermore, assume that:

\begin{itemize}

\item $\Mod(\Ccal)$ carries a left semi-model structure in which the cofibrations are the usual cofibrations of the category of models, the generating trivial cofibrations are the arrows in $J$ and every object is fibrant.

\item Each generating cofibration $A \hookrightarrow B$ of $\Ccal$ admits a relative cylinder object $B \coprod_A B \hookrightarrow I_A B \overset{\sim}{\rightarrow} B $ which is also in $\Ccal$.

\end{itemize}

Then $\Ccal$ is a cylinder category.

\end{lemma}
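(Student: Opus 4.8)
The plan is to verify directly the two defining conditions of a cylinder category, namely that every trivial cofibration admits a retraction and that every object admits a cylinder object, using the left semi-model structure on $\Mod(\Ccal)$ for the first and the given relative cylinders for the second. As a preliminary I would compare the two notions of weak equivalence in sight. Restricting the model weak equivalences of $\Mod(\Ccal)$ to the morphisms of $\Ccal$ produces a class of maps that contains $J$, contains the isomorphisms, is closed under composition, satisfies $2$-out-of-$6$ (which holds in any left semi-model category), and satisfies the gluing axiom of Definition \ref{def:pre-cylidner} (by the gluing lemma for pushouts along cofibrations between cofibrant objects, every object of $\Ccal$ being cofibrant and $\Ccal$ sharing its pushouts along cofibrations with $\Mod(\Ccal)$). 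By the minimality clause defining the weak equivalences of $\Ccal$, this class contains all of them; in particular every cofibration of $\Ccal$ maps to a cofibration of $\Mod(\Ccal)$ and every trivial cofibration of $\Ccal$ to a trivial cofibration with cofibrant domain.

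The retraction condition is then immediate. Let $j\colon A\to B$ be a trivial cofibration in $\Ccal$. Regarded in $\Mod(\Ccal)$ it is a trivial cofibration with cofibrant domain $A$, and since every object is fibrant the map $A\to 1$ is a fibration. Solving the lifting problem with $\mathrm{Id}_A$ along the top and $A\to 1$ along the right produces $r\colon B\to A$ with $rj=\mathrm{Id}_A$. As $A$ and $B$ are representable, the Yoneda lemma identifies $\Mod(\Ccal)(B,A)$ with $\Ccal(B,A)$, so $r$ is already a morphism of $\Ccal$ and is the desired retraction.

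For cylinder objects I would proceed by induction along a presentation of $\emptyset\to X$ as a finite composite of pushouts of generating cofibrations. The base case is the initial object, whose cylinder is itself, and coproducts of cylinders handle disjoint unions. For the inductive step, write $X_{k+1}=X_k\coprod_A B$ for a generating cofibration $A\hookrightarrow B$ attached along $a\colon A\to X_k$, and suppose a cylinder object $IX_k$ in $\Ccal$ has been produced. The aim is to extend it to a cylinder object of $X_{k+1}$ inside $\Ccal$, using the given relative cylinder $I_A B\xrightarrow{\sim}B$ to supply the cylinder of the new cell. The new object is obtained by a pushout built from $IX_k$ and $I_A B$ (which lies in $\Ccal$ by hypothesis and remains in $\Ccal$ by the pushout-stability axiom); the inclusion of $X_{k+1}\coprod X_{k+1}$ is a cofibration by that same axiom, and the projection to $X_{k+1}$ is verified to be a weak equivalence by repeated use of the gluing axiom, the weak equivalences $IX_k\xrightarrow{\sim}X_k$ and $I_A B\xrightarrow{\sim}B$, and the fact that the end-inclusions of a (relative) cylinder are trivial cofibrations. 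Finiteness of the cell presentation then guarantees termination inside $\Ccal$.

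The hard part is precisely this inductive step, and it is genuinely subtle: the relative cylinder $I_A B$ has its two ends \emph{identified} over $A$, whereas the two ends of $IX_k$ are \emph{distinct} over the image $a(A)$, so a naive pushout of $I_A B$ onto $IX_k$ collapses the cylinder direction in the wrong way and fails to produce a weak equivalence to $X_{k+1}$. Organising the construction so that the two structure maps $X_{k+1}\rightrightarrows IX_{k+1}$ are simultaneously well defined, compatible across the attaching map $a$, and so that $IX_{k+1}\to X_{k+1}$ is still a weak equivalence, is the delicate point; I expect it to require combining $I_A B$ with the portion of the cylinder of $A$ already recorded in $IX_k$, and to rely essentially on the gluing lemma furnished by the model structure on the cofibrant objects of $\Mod(\Ccal)$. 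This is where both hypotheses—the relative cylinders lying in $\Ccal$ and the left semi-model structure on $\Mod(\Ccal)$—are used in an indispensable way, and carrying it out completes the proof that $\Ccal$ is a cylinder category.
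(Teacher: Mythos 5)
Your first two steps are sound and agree with the paper's: the observation that the weak equivalences of $\Ccal$ are contained in those of $\Mod(\Ccal)$ (the paper asserts this without the minimality argument you supply, which is a reasonable way to justify it), and the construction of retractions for trivial cofibrations by lifting against $A \rightarrow 1$ together with Yoneda. The problem is the cylinder half, where you explicitly leave the inductive step unresolved; unfortunately that step is the entire mathematical content of the lemma, and the difficulty you correctly diagnose (the ends of $I_A B$ are identified over $A$, while the two images of $a(A)$ in $IX_k$ are separated by the cylinder direction) cannot be overcome with the inductive hypothesis as you have set it up. Knowing only that an absolute cylinder $IX_k$ exists gives you no control over how the attaching sphere of the next cell sits inside $IX_k$, so there is no canonical place to glue $I_AB$, and "repeated use of the gluing axiom" has nothing to act on.

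The paper's proof (following Proposition 5.3.6 of \cite{Hen}) resolves this by strengthening the induction in two ways. First, one inducts over all cofibrations $f\colon A \hookrightarrow B$ of $\Ccal$, constructing \emph{relative} cylinder objects $I_AB$ rather than only absolute ones. Second, and crucially, one carries the invariant that the end inclusion $B \hookrightarrow I_AB$ is an iterated pushout of the maps $D \hookrightarrow I_CD$ for the generating cofibrations $C \hookrightarrow D$ appearing in a cell presentation of $f$. This invariant is exactly the missing control: it records where the cylinder of each already-attached cell lives inside $I_AB$, so that attaching one more cell $C\hookrightarrow D$ contributes exactly one more pushout of $D \hookrightarrow I_CD$; and it guarantees that $B \hookrightarrow I_AB$ is a trivial cofibration \emph{of $\Ccal$} (each $D \hookrightarrow I_CD$ is a trivial cofibration of $\Ccal$, and pushouts of trivial cofibrations are trivial cofibrations by axiom (5) of Definition \ref{def:pre-cylidner}), whence $I_AB \rightarrow B$ is a weak equivalence of $\Ccal$ by 2-out-of-3 once that projection is produced — the left semi-model structure on $\Mod(\Ccal)$ being used to solve the lifting problems that produce it. Your sketch, by contrast, would at best show that the comparison map is a weak equivalence in $\Mod(\Ccal)$, which is not a priori the same class. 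So the gap is real: you need both the passage from absolute to relative cylinders and the iterated-pushout invariant for the induction to close.
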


\begin{proof}
The proof is essentially the same as proposition 5.3.6 of \cite{Hen}. The key observation is that given a cofibration $f:A \hookrightarrow B$ in $\Ccal$ which is obtained as an $n$-iterated pushout of the generating cofibrations, one can show by induction on $n$ (using the model structure on $\Mod(\Ccal)$) that it admits a relative cylinder object such that the map $B \hookrightarrow I_A B$ is an iterated pushout of the maps $D \hookrightarrow I_C D$ where $C \hookrightarrow D$ are the generating cofibrations appearing in the construction of $A \hookrightarrow B$. In particular every cofibration of  $\Ccal$ has a relative cylinder object in $\Ccal$. Every trivial cofibration in $\Ccal$ is automatically a trivial cofibration in $\Mod(\Ccal)$ as well, so as every object of $\Mod(\Ccal)$ is fibrant, it implies that every trivial cofibration of $\Ccal$ has a retraction, and this concludes the proof.
\end{proof}

The other main contributions of \cite{Hen} is the construction of a weak model structure (in the sense of \cite{Hen2}) on the category of pre-cylinder categories whose fibrant objects are the cylinder categories, and whose weak equivalences between fibrant objects are the homotopy equivalences of \ref{rk:equivalence_of_cylinders_cat}. This is closely related to the structure of category of fibrant objects on Brown categories of cofibrant objects constructed by Szumilo in \cite{Szu}.
\begin{rmk}
Note that we are committing a small abuse of language here: it is only a weak model structure in a $2$-categorical sense (i.e. all the liftings properties are only up to coherent isomorphism), moreover \cite{Hen2} was written long after \cite{Hen} so that the language of weak model categories was actually not explicitely used in \cite{Hen}, instead all the factorizations and lifting properties are stated explicitely. We refer the reader to section 4 of \cite{Hen} for further details.
\end{rmk}
More about this weak model structure is provided below. But first, let's introduce the main example of how this weak model structure can be used to construct and compare other model categories.
\begin{defi} \label{def:cylcoherator}
A cylinder coherator is a fibrant replacement of $F_*$, i.e. the free pre-cylinder category on one object, in the weak model structure on pre-cylinder category. This amounts to a trivial cofibration $F_* \overset{\sim}{\hookrightarrow} \Ccal$ with fibrant codomain.
\end{defi}

It follows from what we have explained so far that:

\begin{itemize}

\item The category of models of a cylinder coherator is endowed with a Quillen model structure. Indeed, cylinder coherators are fibrant pre-cylinder categories, hence cylinder categories.

\item Given two cylinder coherators, there is a canonical Quillen equivalence between their categories of models. Indeed they are both fibrant replacement of the same object, so there is a weak equivalence between them, and by the last claim of theorem \ref{thm:model_str_on_category_of_models} this induces a Quillen equivalence between their category of models.

\end{itemize}

Moreover, it is shown in \cite[section 5.2]{Hen} that a certain cylinder coherator has its category of models which is Quillen equivalent to the model category of spaces, so by the observation above they all share this property. This is the key ingredient of the proof in \cite{Hen} that if the canonical left semi-model structure on Grothendieck $\infty$-groupoids (for some coherator) exists (see Proposition \ref{model str}), then it must be Quillen equivalent to the model category of Spaces. It is relatively easy to see that Grothendieck $\infty$-groupoids (with their natural notion of cofibration) are the models of a certain pre-cylinder categories. Using lemma \ref{lem:cylinder_from_MS} one can then show that if the model structure exists, then this pre-cylinder category is a cylinder category. It is relatively easy to show (this argument will be reproduced in the following section) that if it is fibrant, then it is a cylinder coherator, and hence its ``model category of models'' is equivalent to the model category of spaces.

We now give some more precise description of this weak model structure on pre-cylinder categories:

To begin with, there are three generating cofibrations:

\begin{itemize}

\item The map $F_* \rightarrow F_{\hookrightarrow}$ that ``freely adds a cofibration with specified codomain'', i.e. $F_*$ is the free pre-cylinder on an object and $F_{\hookrightarrow}$  is the free pre-cylinder category on one cofibration and $F_* \rightarrow F_{\hookrightarrow}$ is the map sending the generating object to the codomain of the generating cofibration. $F_{\hookrightarrow}$ can be described explicitely as the category of finite set $X,Y$ with an arrow $X \rightarrow Y$, cofibrations are the square of monomorphisms and the generating cofibration is the unique commutative square whose top-left corner is  $\emptyset$ and the other three corners are singletons. Taking a pushout along this map freely adds a cofibration of specificed codomain to a given pre-cylinder category.

\item The map $F_{\hookrightarrow} \rightarrow F_r$ that freely adds a retraction to a given cofibration.

\item The map $F_{\rightarrow} \rightarrow F_{\sim}$ that freely turns an arrow into a weak equivalence.

\end{itemize}

\begin{rmk} \label{rk:Description_of_Free_pre_cyl_cat} The category of models $\Mod(\Ccal)$ of a pre-cylinder category $\Ccal$ can be described as the category of morphisms from $\Ccal$ to the pre-cylinder category $\mathbf{Set}^{op}$ in which every map is a cofibration and a weak equivalence. In particular, the category of models of the free pre-cylinder category on a pushout or of a freely constructed pre-cylinder category are easy to describe, using the universal property of such construction to describe what are morphisms into $\mathbf{Set}^{op}$. This is exploited in more detail in section 3.2 of \cite{Hen}. For example, if $\Ccal$ is a pre-cylinder category, $X \hookrightarrow Y$ is a cofibration in $\Ccal$ and one considers the pre-cylinder category $\Ccal^+$ freely obtained from $\Ccal$ by adding a retraction to $X \hookrightarrow Y$ (i.e. taking a pushout along the generating cofibration $F_{\hookrightarrow} \rightarrow F_r$ mentioned above), then a model of $\Ccal^+$ is the same as a model $M$ of $\Ccal$ such that the map $M(X) \rightarrow M(Y)$ is endowed with a retraction.

Continuing along these lines a little further, one can show that cofibrant pre-cylinder categories corresponds exactly to Cartmell's Generalized algebraic theory (see \cite{Cart}) in which there is no equality axiom but only type introduction axiom and term introduction axiom.

\end{rmk}

One has two generating anodyne morphisms of pre-cylinder categories:

\begin{itemize}

\item The map that freely adds a retraction to a given trivial cofibration.

\item The map that freely adds a factorization as a cofibration followed by a weak equivalence to a given morphisms.

\end{itemize}

Fibrant objects and fibration between fibrant objects are defined by the right lifting property against the generating anodyne morphisms. Note that the fibrant objects are clearly the cylinder categories. One then defines the trivial cofibrations as the cofibration with the left lifting properties against all fibrations between fibrant objects. Finally, the general fibrations are the morphisms with the right lifting property against all trivial cofibrations.

The fact that this forms a ``weak model category'' is summarized by the following properties: 

\begin{itemize}
\item Trivial fibrations are defined as the maps with the right lifting property against generating, hence all, cofibrations. Fibration are defined as having the right lifting propert against all trivial cofibrations. In particular one has all the expected lifting properties.

\item Equivalences are defined as the equivalences of homotopy categories between fibrant objects, and for arrows between objects that are either fibrant or cofibrant by using fibrant replacement of all the cofibrant objects.

\item For a map with fibrant target, being a trivial fibration is equivalent to being a fibration and an equivalence.

\item For a map with cofibrant domain being a trivial cofibration is equivalent to being a cofibration and an equivalence.

\item Any morphism with a fibrant target can be factored as a trivial cofibration followed by a fibration or as a cofibrations followed by a trivial fibration.
\end{itemize}

\begin{constr} \label{constr:C^eq} One of the key ingredients in the construction of this weak model structure is the existence of a ``path category'' for cylinder categories.
In fact, the main result of this paper will rely more on the existence of this path object than on the existence of the weak model structure.
 If $\Ccal$ is a cylinder category, one defines $C^{eq}$ to be the category whose objects are triple of object $X_1,X_2,X_I \in \Ccal$ endowed with a cofibration $X_1 \coprod X_2 \hookrightarrow X_I$ whose two component  $X_1 \hookrightarrow X_I$ and $X_2 \hookrightarrow X_I$ are trivial cofibrations. Morphisms are the natural transformation (i.e. triple of maps making the obvious diagram commute). Weak equivalences are the triple of weak equivalences. Cofibrations are the ``Reedy cofibrations'', i.e. the morphism $f:(X_1,X_2,X_I) \rightarrow (Y_1,Y_2,Y_I)$ such that $f_1:X_1 \rightarrow Y_1$ and $f_2:X_2 \rightarrow Y_2$ are cofibrations, and the latching map:

\[ X_I \coprod_{X_1 \coprod X_2} (Y_1 \coprod Y_2) \hookrightarrow Y_I \]

is a cofibration.

It is shown in section 4.2 and 4.3 of \cite{Hen} that $\Ccal^{eq}$ is also a cylinder category, that the map $\Ccal^{eq} \rightarrow \Ccal \times \Ccal $ forgeting the $X_I$ component is a fibration in the sense explained above, and the composites $\Ccal^{eq} \rightarrow \Ccal$ with the two projections are both trivial fibrations. Note that in general there is no morphism $\Ccal \rightarrow \Ccal^{eq}$ producing a factorization of the diagonal map of $\Ccal$, as it is in general not possible to choose $X_I$ functorially and in a way compatible with pushouts of cofibrations. Nevertheless, it is shown in section 4.3 of \cite{Hen} that such a map exists when $\Ccal$ is cofibrant. Hence we obtain a path objects for bifibrant objects, which is why one only gets a weak model structure.
\end{constr}

\begin{constr} \label{constr:Hslice} We will also need to use the ``homotopy slice'' construction for cylinder categories, which was developed in \cite[section 4.4]{Hen}. In general, the slice $\Ccal_{/X}$ is not a cylinder category, and even if it were, it might not represent the correct homotopy theory. In order to fix this, we replace them with the so-called homotopy slice $\Ccal^{X}$. This is the category of pairs of object $A,I \in \Ccal$ endowed with a cofibration $A \coprod X \hookrightarrow I$ such that the component $X \hookrightarrow I$ is a trivial cofibration. Morphisms are pairs of maps making the obvious diagram commute,  equivalences are pairs of equivalence and cofibrations are the Reedy cofibrations defined similarly to what was done in construction \ref{constr:C^eq}.

It is shown in section 4.4 of \cite{Hen} that if $\Ccal$ is a cylinder category then $\Ccal^{X}$ is also a cylinder category and $\Ccal^{X} \rightarrow \Ccal$ is a fibration. More generally, if $p:\Ccal \twoheadrightarrow \Dcal$ is a fibration and $X \in \Ccal$ then $\Ccal^X \rightarrow \Dcal^{P(X)}$ is also a fibration. Moreover $\Ccal^{X} \rightarrow \Ccal$ is a trivial fibration if and only if $X$ is h-terminal in the sense of \cite[Def. 4.4.3]{Hen}, i.e. if any solid diagram as below admits a filling as indicated by the dotted arrow:

\[\begin{tikzcd}
  A \ar[d,hook] \ar[r] & X \, , \\
B \ar[ur, dotted] 
\end{tikzcd}
\]
Equivalently, this happens if and only if $X$ is a terminal object in the homotopy category.

\end{constr}

We conclude this section with two examples of pre-cylinder categories that will be very important in the next section.

\begin{constr}

Let $rGlob_n$ denote the category of reflexive $n$-globular sets, i.e. $n$-globular sets endowed, for each $k$-cell $x$ with $k<n$, with the choice of a $k+1$-cell $r(x)$ with $x$ as source and target. The forgetful functor from reflexive $n$-globular sets to $n$-globular sets has a left adjoint that justs adds a cell $r^k(x)$ for each cell $x$ and positive integer $k$. One also denotes $\partial D_{k+1}$ and $D_k$ the image of the $\partial D_{k+1}$ and $D_n$ in the category of globular set as in Definition \ref{models defi} by this left adjoint functor. 

One takes the morphisms $\partial D_k \hookrightarrow D_k$ to be the generating cofibrations in the category $rGlob_n$, and it is easy to see that the cofibrations are exactly the monomorphisms in this case.

One let $rGlob^f_n$ be the pre-cylinder category whose underlying category is the category of finite reflexive $n$-globular sets, with cofibrations as above, and with  weak equivalences generated by the morphisms between the $D_k$ (i.e. the smallest class of equivalences containing these arrows and making this category into a pre-cylinder category).

Note that $rGlob^f_0$ is the same as $F_*$. One also has the follwing results.

\end{constr}

\begin{lemma} \label{lem:morphism_from_glob}
\begin{enumerate}
\item[]
\item For each $n$, the natural morphism:

\[ rGlob^f_n \rightarrow rGlob^f_{n+1} \]

is freely generated by adding a relative cylinder object to the cofibrations $\partial D_n \hookrightarrow D_n$. In particular it is an anodyne morphism.

\item The category of models of $rGlob^f_n$ is the category of reflexive $n$-globular sets, where the inclusion of $rGlob^f_n$ in reflexive $n$-globular sets is the Yoneda embedding.

\item A morphism $rGlob^f_n \rightarrow D$ into a pre-cylinder category $D$ is given by the choice of an object $X \in D$ and of the so-called first $n$ iterated relative cylinder objects of $X$, i.e. a sequence of objects \(I^0 X=X, \ \ldots, \ I^n X\) each fitting into a cofibration/weak equivalence factorization in $D$:

\[ I^{n-1} X \coprod_{I^{n-2} X \coprod I^{n-2} X } I^{n-1} X \hookrightarrow I^n X  \overset{\sim}{\rightarrow} I^{n-1} X \]

More precisely, $X$ is the image of $D_0$ and the factorization above is the image of the morphism:

\[\partial D_n=D_{n-1} \coprod_{\partial D_{n-1}} D_{n-1} \hookrightarrow D_n \rightarrow D_{n-1}. \]

\end{enumerate}

\end{lemma}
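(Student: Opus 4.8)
The plan is to establish the three parts in the order (2), (1), (3): the model-categorical description (2) is essentially formal, it gives the cleanest route to (1), and the universal property (3) then follows by induction once (1) is available. For (2), I would argue exactly as for any presheaf category. Reflexive $n$-globular sets are the presheaves on the reflexive globe category, and under this identification the representables are precisely the objects $D_k$. Any reflexive $n$-globular set $Y$ yields a functor $\Hom(-,Y)\colon(rGlob^f_n)^{op}\rightarrow\mathbf{Set}$; it sends the initial (empty) object to the singleton and, since $\Hom(-,Y)$ turns colimits into limits, sends pushouts along cofibrations to pullbacks, hence is a model in the sense of Construction \ref{constr:Category_of_models}. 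Conversely, a model $M$ is determined by the sets $M(D_k)$ together with the face and reflexivity maps; the input here is that every finite reflexive $n$-globular set is a finite iterated pushout of the boundary inclusions $\partial D_k\hookrightarrow D_k$ (its cofibrations are exactly the monomorphisms), so the requirement that $M$ carry these pushouts to pullbacks forces $M\cong\Hom(-,Y)$ for $Y$ the reflexive globular set with $Y_k=M(D_k)$. This gives the equivalence and identifies the inclusion of $rGlob^f_n$ with the Yoneda embedding.

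For (1), I would use the description of Remark \ref{rk:Description_of_Free_pre_cyl_cat}, namely that a model of a pre-cylinder category is the same as a morphism into $\mathbf{Set}^{op}$, in which every arrow is both a cofibration and a weak equivalence. A model of the pre-cylinder category freely obtained from $rGlob^f_n$ by adding a relative cylinder object to $\partial D_n\hookrightarrow D_n$ is then a reflexive $n$-globular set $Y$ together with a factorization \emph{in} $\mathbf{Set}$ of the diagonal $Y_n\rightarrow Y_n\times_{Y(\partial D_n)}Y_n$ (the codiagonal of $D_n$ relative to $\partial D_n$, computed in $\mathbf{Set}^{op}$). Since $Y_n\times_{Y(\partial D_n)}Y_n=\{(z,z')\colon z\parallelsum z'\}$ is the set of parallel pairs of $n$-cells, such a factorization $Y_n\xrightarrow{r}Y_{n+1}\xrightarrow{(s,t)}\{(z,z')\colon z\parallelsum z'\}$ is exactly a set $Y_{n+1}$ with source, target and reflexivity maps, i.e.\ a reflexive $(n+1)$-globular set. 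Thus the free construction and $rGlob^f_{n+1}$ have equivalent categories of models, and the canonical comparison morphism sending the freely added object to $D_{n+1}$ (which carries the required structure via $\partial D_{n+1}=D_n\coprod_{\partial D_n}D_n\hookrightarrow D_{n+1}\xrightarrow{\sim}D_n$) restricts to the asserted isomorphism on the full subcategories of finite objects. The morphism is anodyne because freely adding a relative cylinder object is the instance of the generating anodyne morphism ``freely add a cofibration-followed-by-weak-equivalence factorization'' applied to a codiagonal map.

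For (3) I would induct on $n$. The base case $n=0$ is the identification $rGlob^f_0=F_*$, for which a morphism into $\Dcal$ is exactly a choice of object $X=I^0X$. For the inductive step, part (1) presents $rGlob^f_{n+1}$ as $rGlob^f_n$ with a freely added relative cylinder object on $\partial D_n\hookrightarrow D_n$, so by the universal property of that free construction a morphism $F\colon rGlob^f_{n+1}\rightarrow\Dcal$ is a morphism $rGlob^f_n\rightarrow\Dcal$ — by induction, an object $X$ and relative cylinders $I^1X,\dots,I^nX$ — together with one further relative cylinder object $I^{n+1}X$ of the image of $\partial D_n\hookrightarrow D_n$. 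The combinatorial heart is the identification, for each $k$, of the domain of the $k$-th factorization with $I^{k-1}X\coprod_{I^{k-2}X\coprod I^{k-2}X}I^{k-1}X$: this holds because $F$ preserves the pushout $\partial D_k=D_{k-1}\coprod_{\partial D_{k-1}}D_{k-1}$ along the cofibration $\partial D_{k-1}\hookrightarrow D_{k-1}$, giving $F(\partial D_k)=I^{k-1}X\coprod_{F(\partial D_{k-1})}I^{k-1}X$, and because the pushout projection $I^{k-2}X\coprod I^{k-2}X\twoheadrightarrow F(\partial D_{k-1})$ is an epimorphism, so that gluing two copies of $I^{k-1}X$ along the two endpoint maps coincides with gluing along all of $F(\partial D_{k-1})$. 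This matches the stated factorization with $F$ applied to $\partial D_k\hookrightarrow D_k\rightarrow D_{k-1}$.

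I expect the main obstacle to be the bookkeeping in (1): one must check not only that the underlying categories and the cofibrations (the monomorphisms) agree, but that the weak equivalences of $rGlob^f_{n+1}$ — generated by the maps between the $D_k$ for $k\le n+1$ — coincide with the class generated in the free construction, i.e.\ by the maps among $D_0,\dots,D_n$ together with the single collapse $D_{n+1}\xrightarrow{\sim}D_n$ (the endpoint inclusions $\sigma,\tau\colon D_n\to D_{n+1}$ being forced to be weak equivalences by two-out-of-three). Confirming that these two generated classes agree, and that the free construction introduces no objects beyond the finite reflexive $(n+1)$-globular sets, is the crux; everything else is formal.
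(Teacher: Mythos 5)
Your proposal is correct and follows essentially the same route as the paper: both arguments hinge on the description of free pre-cylinder categories via their models (morphisms into $\mathbf{Set}^{op}$ as in Remark \ref{rk:Description_of_Free_pre_cyl_cat}), identify the models of the free construction ``add a relative cylinder object to $\partial D_n \hookrightarrow D_n$'' with reflexive $(n+1)$-globular sets, recover $rGlob^f_{n+1}$ as the finitely generated such models with the freely generated equivalences, and deduce point (3) from the universal property; the paper merely packages this as a single simultaneous induction rather than your ordering (2), (1), (3). Your extra remarks (the epimorphism argument identifying $F(\partial D_k)$ with the stated pushout, and the two-out-of-three check on the generated classes of equivalences) fill in details the paper leaves implicit, and are sound.
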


\begin{proof}
One proves all points by simultaneous induction, using the general description of free pre-cylinder categories mentioned in \ref{rk:Description_of_Free_pre_cyl_cat}. Assuming $2.$ and $3.$ hold up to dimensions $n$, we consider the pre-cylinder category $T$ obtained by freely adding a relative cylinder object for  $\partial D_n \hookrightarrow D_n$ to $rGlob^f_n$. In particular, a morphism from $T$ into any other pre-cylinder category $D$ is described as in point $3.$ (for $rGlob^f_{n+1}$). Using remark \ref{rk:Description_of_Free_pre_cyl_cat} that models of $T$ can be described as morphisms from $T$ to the (opposite) of the category of sets, one obtains that the models of $T$ are reflexive $(n+1)$-globular sets. Indeed, a model of $T$ is given by a model $X$ of $rGlob^f_n$, i.e. a reflexive $n$-globular set, together with a set $X_{n+1}$ fitting into a factorization of the form:

\[ X(D_n) \rightarrow X_{n+1} \rightarrow X(D_n ) \times_{X(D_{n-1}) \times X(D_{n-1})} X(D_n) \]

This is is the same as a reflexive $(n+1)$-globular set. Finally, $T$ identifies with the models of $T$ that are finitely generated by pushout of the generating cofibrations, and those are by definition exactly the finite $(n+1)$-globular sets, with equivalences freely generated by those of $rGlob^f_n$ and the fact that morphisms between $D_n$ and $D_{n+1}$ are equivalences (so that $D_{n+1}$ is indeed a relative cylinder object), so it is indeed isomorphic to $rGlob^f_{n+1}$.

\end{proof}

\begin{constr} \label{constr:CDelta} A construction very similar to the one done above for globular sets, applied to categories of presheaves over a directed category, is developed in section 4.2 of \cite{Hen}. We will explicitly need the special case of semi-simplicial sets that we briefly present here.

Let $\Delta_+$ be the category of finite non-empty ordinals and injective order preserving maps between them. Presheaves on $\Delta_+$ are called semi-simplicial sets. We denote by $\Ccal^{\Delta_+}_0$ the pre-cylinder category with the following universal property: for any other pre-cylinder category $D$, morphisms from  $\Ccal^{\Delta_+}_0$ to $D$ are given by the data of a Reedy cofibrant diagram $\Delta_+ \rightarrow D$ such that all maps are sent to equivalences.

Alternatively, it can be described as the category of finite semi-simplicial sets where the cofibrations are the monomorphisms and the equivalences are the smallest set of arrows containing the morphisms between representable objects and making $\Ccal^{\Delta_+}_0$ into a pre-cylinder category.

It is shown in \cite[Prop. 5.2.11]{Hen} that the morphisms $F_* \rightarrow \Ccal^{\Delta_+}_0$ corresponding to $\Delta_+[0]$ is a trivial cofibration.

Note, in \cite{Hen} we have shown that the semi-simplicial horn inclusion $\Lambda^k_n[n] \hookrightarrow \Delta_+[n]$ are trivial cofibrations in $\Ccal^{\Delta_+}_0$, in particular if one freely add a retraction to all of them, one obtains a new pre-cylinder category denoted $\Ccal^{\Delta_+}$ together with an anodyne morphisms $\Ccal^{\Delta_+}_0 \rightarrow \Ccal^{\Delta_+}$. The models of $\Ccal^{\Delta_+}$ are the semi-simplicial algebraic Kan complexes (i.e. semi-simplicial endowed with chosen lift against all horn inclusion), and it is proved in \cite{Hen} proposition 5.2.9 that $\Ccal^{\Delta_+}$ is a cylinder category, and hence as the morphism $F_* \rightarrow \Ccal^{\Delta_+}$ is anodyne, it is a cylinder coherator.

\end{constr}

\section{Main theorem}
\label{sec:mainTh}

The central point of the argument in \cite{Hen} in order to show that if the canonical left semi-model structure on $\infty$-groupoids exists (see Proposition \ref{model str}) then it is equivalent to the model category of spaces is to show that both these model categories have the homotopy universal property to be `` (homotopicaly) freely generated by one object''. This is formalized by the notion of cylinder coherator, i.e. of fibrant replacement of the pre-cylinder category $F_*$ freely generated by one object as in Definition \ref{def:cylcoherator}.

In order to adapt this argument to $n$-groupoids and homotopy $n$-types one needs to find a similar universal property for these categories. Our candidate for this property is that they are ``freely generated by a single $n$-co-truncated object'' where the notion of $n$-co-truncated object is an object $X$ satisfying the equivalent condition of the proposition below:

\begin{prop} Let $M$ be a model category in which every object is fibrant, and let $X$ be a cofibrant object in $M$. The following conditions are equivalent:

\begin{enumerate}

\item Given a semi-co-simplicial cofibrant resolution of $X$, i.e. a morphism $F:\Ccal_0^{\Delta_+} \rightarrow M$ sending $\Delta_+[0]$ to $X$, the  cofibration $F(\partial \Delta_+[n+2] ) \hookrightarrow F(\Delta_+[n+2])$ is an equivalence.

\item Given a choice of the first $n+1$ iterated relative cylinder objects of $X$ as in lemma \ref{lem:morphism_from_glob}, i.e. a morphism $I:rGlob^f_{n+1} \rightarrow X$ sending $D_0$ to $X$, the morphism $I(\partial D_{n+2}) \rightarrow I(D_{n+1})$ (or equivalently any of the two cofibration $I(D_{n+1}) \hookrightarrow I(\partial D_{n+2})$) is an equivalence.

\end{enumerate}

Moreover, in both cases, the conditions do not depend on the choice of the morphisms $I$ and $F$.

\end{prop}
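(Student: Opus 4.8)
The plan is to work throughout in the cylinder category $Cof(M)$ of cofibrant objects of $M$, which is a cylinder category precisely because every object of $M$ is fibrant. A choice of $X$ is then a morphism $F_* \to Cof(M)$; a semi-co-simplicial resolution $F$ is an extension of this morphism along the trivial cofibration $F_* \to \Ccal^{\Delta_+}_0$ of Construction \ref{constr:CDelta}; and a choice $I$ of the first $n+1$ iterated relative cylinder objects is an extension along the morphism $F_* = rGlob^f_0 \to rGlob^f_{n+1}$, which is anodyne, being a composite of the anodyne maps of Lemma \ref{lem:morphism_from_glob}(1).

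First I would dispatch the independence statements. Since $Cof(M)$ is a fibrant object of the weak model structure on pre-cylinder categories and $F_* \to \Ccal^{\Delta_+}_0$ is a trivial cofibration, any two resolutions $F, F'$ restricting to the same $X$ are two solutions of one lifting problem against a fibrant object, hence are homotopic as morphisms of pre-cylinder categories. A homotopy supplies, for each object $Z$, a natural weak equivalence $F(Z) \overset{\sim}{\to} F'(Z)$, so $F$ and $F'$ send the latching cofibration $\partial\Delta_+[n+2] \hookrightarrow \Delta_+[n+2]$ to arrows fitting into a square of natural weak equivalences; by two-out-of-three one is a weak equivalence exactly when the other is. The identical argument applied to $F_* \to rGlob^f_{n+1}$ yields independence of condition (2) from $I$. (For the parenthetical in (2), note that the composite $I(D_{n+1}) \hookrightarrow I(\partial D_{n+2}) \to I(D_{n+1})$ is an end-inclusion of the cylinder $I^{n+2}X$ followed by its collapse, hence the identity, so two-out-of-three shows the inclusion is an equivalence exactly when $I(\partial D_{n+2}) \to I(D_{n+1})$ is.)

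To prove (1) $\Leftrightarrow$ (2) I would reduce both to a single intrinsic condition on $X$. Since $F(\Delta_+[n+2]) = F(\Delta_+[n+2])$ and $I(D_{n+1}) = I^{n+1}X$ are both canonically weakly equivalent to $X$, each condition asserts that the relevant boundary object maps by a weak equivalence to $X$. The key point is that both boundary objects compute the same homotopy colimit: as $F$ is a Reedy-cofibrant diagram valued in weak equivalences, $F(\partial\Delta_+[n+2])$ is the homotopy colimit over the poset of proper faces of $\Delta[n+2]$ of an essentially constant diagram at $X$, hence is canonically equivalent over $X$ to the copower $|\partial\Delta[n+2]| \cdot X \simeq S^{n+1}\cdot X$; and, since the tower of $I$ assembles $I(\partial D_{n+2})$ as an iterated pushout of cofibrations between cofibrant objects all equivalent to $X$, indexed by the globular $(n+1)$-sphere, it likewise presents $S^{n+1}\cdot X$ over $X$. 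Under these identifications both boundary maps become the copower with $X$ of the collapse $S^{n+1}\to *$, so conditions (1) and (2) are each equivalent to the single statement that $S^{n+1}\cdot X \to X$ is a weak equivalence, and hence to each other.

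The main obstacle is this last homotopy-colimit bookkeeping, and in particular its globular half: one must verify that the iterated relative cylinder tower of Lemma \ref{lem:morphism_from_glob} genuinely exhibits $I(\partial D_{n+2})$ as the homotopy colimit of the essentially constant diagram over the globular sphere, i.e. that each pushout along a cofibration in the tower is a homotopy pushout and that the total index shape is weakly equivalent to $S^{n+1}$, and that this identification agrees over $X$ with the Reedy-theoretic one on the semi-simplicial side. The simplicial half is standard Reedy homotopy theory, whereas the globular half requires carefully tracking the cylinder structure maps through the iterated pushouts, which is where the bulk of the verification lies.
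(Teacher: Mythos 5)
Your route is genuinely different from the paper's. You reduce both conditions to the single intrinsic statement that the homotopy copower map $S^{n+1}\cdot X\to X$ is an equivalence; this is essentially the $\infty$-categorical characterization of $n$-co-truncatedness that the paper mentions (via Lurie's Lemmas 5.5.6.15 and 5.5.6.17) but deliberately avoids. The paper instead amalgamates the two free pre-cylinder categories into the pushout $\Ccal_0^{\Delta_+}\coprod_{F_*} rGlob^f_{n+1}$, takes a fibrant replacement $T$ of $F_*$ through it, and observes that $T$ is a cylinder coherator, hence admits a homotopy equivalence to $Cof(Spaces)$ sending $\Delta_+[k]$ to simplices and $\partial D_k$, $D_k$ to spheres and balls. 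Since $\Delta_+[n+1]\to\partial\Delta_+[n+2]$ and $D_{n+1}\to\partial D_{n+2}$ are homeomorphic in spaces, they are homotopic in $T$, hence have homotopic images under any morphism $T\to Cof(M)$ extending a given pair $(F,I)$; this yields the equivalence of (1) and (2) and the independence of choices in one stroke, with no homotopy colimit computation.

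The difficulty with your version as written is that its decisive step is exactly the one you flag as ``the main obstacle'' and do not carry out: the identification of $I(\partial D_{n+2})$ with $S^{n+1}\cdot X$ over $X$, compatibly with the Reedy identification of $F(\partial\Delta_+[n+2])$ on the semi-simplicial side. This is not routine bookkeeping in the stated generality: $M$ is an arbitrary model category with all objects fibrant, so $S^{n+1}\cdot X$ has no literal meaning and must be interpreted via a framing or in the underlying $\infty$-category; and the inductive identification of the iterated codiagonals $I^{k}X\coprod_{I(\partial D_{k})}I^{k}X$ with sphere copowers, together with the comparison of the globular and simplicial cell structures on $S^{n+1}$ relative to $X$, is precisely the mathematical content to be proved. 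Until that is supplied, conditions (1) and (2) have not actually been connected. (Your independence argument, lifting against the fibrant pre-cylinder category $Cof(M)$ and invoking a homotopy, is sound, though note that a homotopy valued in $Cof(M)^{eq}$ produces for each $Z$ a cospan of trivial cofibrations $F(Z)\hookrightarrow H(Z)\hookleftarrow F'(Z)$ rather than a direct weak equivalence; two-out-of-three still gives what you need.)
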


As said above, objects satisfying these equivalent conditions will be called $n$-co-truncated objects.

One can actually see that these two conditions are both equivalent to the fact that $X$ is a $n$-co-truncated\footnote{by co-truncated, we mean truncated in the opposite category.} object in the sense of \cite{HTT} definition 5.5.6.1 in the $\infty$-category associated with $M$, and that this even holds without assuming that every object in $M$ is fibrant. Indeed, for the equivalence with the first point it essentially follows from lemma 5.5.6.17 of \cite{HTT} together with the fact that $F(\partial \Delta[n+2])$ is a model of $\partial \Delta[n+2] \otimes X$, and for the second point it follows from lemma 5.5.6.15 of \cite{HTT} together with the fact that iterated cylinders are model for the iterated codiagonal\footnote{The codiagonal map of $f:X \rightarrow Y$ is the map $Y \coprod_X Y \rightarrow Y$, the $n$-th iterated codiagonal map of a map $f$ is the codiagonal map of its $(n-1)$-th iterated codiagonal maps. The iterated codiagonal of an object $X$ is the iterated codiagonal of the map $\emptyset \rightarrow X$ where $\emptyset$ is the initial object.  In a model category these are modeled by iterated cylinders.} maps.

In order to make the paper more self contained, we give a different proof relying instead on the machinery of cylinder coherators.
\begin{proof}

Both $F_* \overset{\sim}{\hookrightarrow} \Ccal_0^{\Delta_+}$ and $F_* \overset{\sim}{\hookrightarrow} rGlob^f_{n+1}$ are trivial cofibrations of pre-cylinder categories, which allows for any given object $X \in M$ to construct functors $F$ and $I$ as in the proposition. We will show that given any two such functors, conditions $(1)$ for $F$ is equivalence to conditions $(2)$ for $I$, which in particular shows that the conditions $(1)$ and $(2)$ do not depend on the choice of $F$ and $I$.  The morphism:

\[ F_* \hookrightarrow  \Ccal_0^{\Delta_+} \coprod_{F_*} rGlob^f_{n+1} \]

where the coproduct is taken in the category of pre-cylinder categories, is again a trivial cofibration. One can then construct a fibrant replacement:
\[ F_* \overset{\sim}{\hookrightarrow} \Ccal_0^{\Delta_+} \coprod_{F_*} rGlob^f_{n+1} \overset{\sim}{\hookrightarrow} T \]

Any two choices of functors $F$ and $I$ as in the proposition give rise to a morphism

 \[ \Ccal_0^{\Delta_+} \coprod_{F_*} rGlob^f_{n+1} \rightarrow Cof(M) \]

 which can be extended to a morphism $T \rightarrow Cof(M)$.

Now $T$ is a cylinder coherator, for it is a fibrant replacement of $F_*$, hence its category of models is Quillen equivalent to the category of spaces, with the equivalence given by any morphisms $T \rightarrow Cof(Spaces)$ sending $\Delta[0]$ (which in $T$ is isomorphic to $D_0$) to the point. In particular this functor can be chosen to send $\Delta_+[k]$ to the standard simplex and $\partial D_k$ and $D_k$ to the standard spheres and balls.

Condition $(1)$ can be rephrased as the fact that $F(\Delta[n+1]) \rightarrow F(\partial \Delta[n+2])$ is an equivalence, and in the category of spaces the maps: $\Delta[n+1] \rightarrow \partial \Delta[n+2]$, and $D_{n+1} \rightarrow \partial D_{n+1}$ are homeomorphic, hence they are homotopic in $T$ as well. In particular, their images in $M$ are homotopic, and this proves the equivalence of conditions $(1)$ and $(2)$.

\end{proof}

\begin{constr} \label{cstr:groupoid_pre-cyl}
Let $\C$ be a cellular globular theory, with defining tower:

\[ \Theta_0 = \C_1 \rightarrow \C_2 \rightarrow \dots \rightarrow \C_n \rightarrow \dots \rightarrow \C \]

We will construct a sequence of pre-cylinder categories:

\[ F_* \hookrightarrow \C^{0}_a \hookrightarrow \dots \hookrightarrow \C^{n}_a \hookrightarrow \dots \hookrightarrow \C_a \]

such that the $\C^{n}_a$-models are the same as the $n$-coskeletal models of $\C$ 
( or equivalently of $\C_n$), with its usual notion of cofibration, and the morphisms above are cofibrations of pre-cylinder categories (and corresponds to the usual adjunctions induced by the restriction morphisms). The $\C^n_a$ have no equivalences except for isomorphisms.

Explicitely, $\C_a^n$ is the category of finitely generated (in a polygraphic sense, i.e. by pushout of the generating cofibrations) $n$-coskeletal $\C$-groupoids. One still call $\partial D_{k+1}$ and $D_k$ the object of $\C^n_a$ freely generated by the globular sets with the same name.

Assuming one has already proved that $\C_a^{n}$ has all the desired property, one defines $\C_a^{n+1}$ from $\C_a^n$ as follows: first, one freely add the object $D_{n+1}$ with a cofibration $\partial D_{n+1} \hookrightarrow D_{n+1}$.  Then for each of the new $(n+1)$-dimensional operation $f_i$ appearing in $\C_{n+1 } = \C_{n}[f_i]$, one freely adds an arrow as the dotted arrow fitting in the diagram:

\[\begin{tikzcd}
 \partial D_{n+1} \ar[d,hook] \ar[rr,"(f_i\circ \sigma{,}f_i\circ \tau)"] && P \\  
D_{n+1} \ar[urr,dotted,"f_i"{swap}] 
\end{tikzcd} \]

where $P$ is the globular sum corresponding to $f_i$.

Assuming by induction that the models of $\C^{n}_a$ are indeed the $n$-coskeletal models of $\C_{n}$, models of  $\C^{n+1}_a$ (which is defined above as a free object) have an additional object of $(n+1)$-arrows with source and target map as expected, and all the new generators of $\C_{n+1}$ that acts on them. By freeness of $\C_{n+1}$ this makes the models of $\C^{n+1}_a$ exactly the same as $(n+1)$-coskeletal models of $\C_{n+1}$.

By its free construction, the usual sphere inclusions are generating cofibrations of $\C^{n+1}_a$, and hence the object of $\C^{n+1}_a$ are indeed the finitely freely generated $(n+1)$-coskeletal $\C_{n+1}$-models.

One now defines $\C^{n}_a \hookrightarrow \C^n_h$ by forcing some morphisms to be equivalence (so far, $\C^n_a$ has no equivalences other than the isomorphisms). More precisely the new equivalences are the morphisms:

\[ s: D_k \rightarrow D_{k+1} \quad k<n \]
\[ i_0 \colon D_n \rightarrow \partial D_{n+1}\]

Note that the morphisms $\C^n_a \hookrightarrow \C^{n+1}_a$ are not morphisms from $\C^n_h$ to $\C^{n+1}_h$, but the morphism $F_* \rightarrow \C^{n+1}_h$ corresponding to $D_0$ is a cofibration.

\end{constr}

In the rest of the article we will always assume that $\C$ is contractible, i.e. it is a coherator for $\infty$-groupoids. In pratice we only ever use $\C_{n+1}$, so it is enough to assume contractibility up to dimension $n+1$.

\begin{prop} \label{prop:CanMS<=>Cylinder}
The following are equivalent:

\begin{enumerate}

\item $\C_h^{n+1}$ is a cylinder category.

\item The canonical left semi-model structure on $(n+1)$-coskeletal $\Ccal$-models exists (see Proposition \ref{model str}).
\end{enumerate}

\end{prop}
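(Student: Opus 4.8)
The plan is to deduce both implications from the abstract comparison results of Section \ref{sec:cylinder_cat}, after making one bookkeeping observation. Since $\C^{n+1}_h$ is obtained from $\C^{n+1}_a$ only by declaring certain maps to be weak equivalences (Construction \ref{cstr:groupoid_pre-cyl}), the two pre-cylinder categories share the same underlying category, initial object, cofibrations and pushouts of cofibrations; as weak equivalences play no role in Construction \ref{constr:Category_of_models}, this gives $\Mod(\C^{n+1}_h)=\Mod(\C^{n+1}_a)$, which is exactly the category of $(n+1)$-coskeletal $\C$-models. Under this identification the cofibrations of $\C^{n+1}_h$ are the sphere inclusions, generating the boundary-inclusion cofibrations $I$ of the canonical structure, while its weak equivalences are generated by $J=\{\sigma_k\colon D_k\to D_{k+1}\}_{k\le n}\cup\{i_0\colon D_{n+1}\to\partial D_{n+2}\}$. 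These match the generating trivial cofibrations of the canonical left semi-model structure of Theorem \ref{thm:Quillen_equiv_cosk_truncated} (recall $D_{n+2}\cong\partial D_{n+2}$ for $(n+1)$-coskeletal objects, so $i_0$ is the coskeletal image of $\sigma_{n+1}$), and every object is fibrant in either structure.

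For $(2)\Rightarrow(1)$ I would apply Lemma \ref{lem:cylinder_from_MS} with $\Ccal=\C^{n+1}_h$. Its hypotheses are almost immediate: $I$ is a finite generating set of cofibrations, the weak equivalences are generated by $J$ by construction, and the assumed canonical left semi-model structure furnishes precisely the left semi-model structure on $\Mod(\C^{n+1}_h)$ required by the lemma (usual cofibrations, generating trivial cofibrations $J$, all objects fibrant). The one substantive point is a relative cylinder object inside $\C^{n+1}_h$ for each generating cofibration $\partial D_k\hookrightarrow D_k$, and here contractibility of $\C$ (up to dimension $n+1$, which we assume) is what I would use. For $k\le n$ I take $\partial D_{k+1}\hookrightarrow D_{k+1}\xrightarrow{\,h\,}D_k$, where $h$ is a filler of the admissible parallel pair $(\mathrm{id}_{D_k},\mathrm{id}_{D_k})$ supplied by contractibility; since $h\circ\sigma_k=\mathrm{id}_{D_k}$ with $\sigma_k\in J$ a weak equivalence, $h$ is a weak equivalence by $2$-out-of-$6$, and $h$ restricts to the codiagonal on the boundary, as needed. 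For $k=n+1$ one instead uses that $\partial D_{n+2}\cong D_{n+2}$ is coskeletal and that the codiagonal $\partial D_{n+2}\to D_{n+1}$ is retracted by $i_0\in J$, hence is a weak equivalence. Lemma \ref{lem:cylinder_from_MS} then concludes that $\C^{n+1}_h$ is a cylinder category.

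For $(1)\Rightarrow(2)$ I would feed the cylinder category $\C^{n+1}_h$ into Theorem \ref{thm:model_str_on_category_of_models}, obtaining a left semi-model structure on $\Mod(\C^{n+1}_h)$ whose cofibrations are generated by $I$, whose trivial cofibrations are generated by $J$, and in which every object is fibrant. It then remains only to identify its class $\mathcal{W}'$ of weak equivalences with the class $\mathcal{W}$ of groupoid weak equivalences, which exhibits the structure as the canonical one. I would argue this purely formally: both $J$ and the trivial fibrations (the $I\text{-inj}$ maps) lie in $\mathcal{W}$, and one already knows the key identity $J\text{-inj}\cap\mathcal{W}=I\text{-inj}$ (this is Lemma 3.6 of \cite{EL2}, also invoked in the proof of Theorem \ref{thm:Quillen_equiv_cosk_truncated}). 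Factoring an arbitrary map as a trivial cofibration followed by a fibration and applying $2$-out-of-$3$ — for $\mathcal{W}$ to get $\mathcal{W}\subseteq\mathcal{W}'$, and for $\mathcal{W}'$ to get $\mathcal{W}'\subseteq\mathcal{W}$ — yields the equality.

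The main obstacle, and the only genuinely geometric step, is the construction of the relative cylinder objects in $(2)\Rightarrow(1)$; everything else is a translation between the pre-cylinder formalism and the algebraic presentation. The subtlety to watch is the top dimension $k=n+1$, where the coskeletal collapse $D_{n+2}\cong\partial D_{n+2}$ forces the cylinder homotopy to be produced from the retraction $i_0$ rather than from a contractibility filler, and where one must check that the resulting factorization is genuinely a relative cylinder object in the sense of Lemma \ref{lem:cylinder_from_MS}.
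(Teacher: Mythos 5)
Your proof is correct and follows essentially the same route as the paper: $(1)\Rightarrow(2)$ via Theorem \ref{thm:model_str_on_category_of_models}, and $(2)\Rightarrow(1)$ via Lemma \ref{lem:cylinder_from_MS}, exhibiting the relative cylinder object $\partial D_{k+1}\hookrightarrow D_{k+1}\to D_k$ from a contractibility filler of $(\mathrm{id},\mathrm{id})$ in low dimensions and $\partial D_{n+2}\hookrightarrow\partial D_{n+2}\to D_{n+1}$ (using the retraction of $i_0$) in the top dimension, exactly as in the paper. Your additional bookkeeping (identifying $\Mod(\C^{n+1}_h)$ with the coskeletal models and matching the classes of weak equivalences) only makes explicit what the paper leaves implicit.
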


\begin{proof} $(1)$ implies $(2)$ follows from the construction of the left semi-model structure on the models of a cylinder category, as recalled in theorem \ref{thm:model_str_on_category_of_models}. The converse follows from lemma \ref{lem:cylinder_from_MS} together with the fact that each generating cofibration $\partial D_{k+1} \rightarrow D_{k+1}$ of $\C_h^{n+1}$ has relative cylinder object given by:

 \[ D_{k+1} \coprod_{\partial D_{k+1}} D_{k+1} = \partial D_{k+2} \hookrightarrow D_{k+2} \overset{u}{\rightarrow} D_{k+1} \qquad k<n \]
\[ D_{n+1} \coprod_{\partial D_{n+1}} D_{n+1} = \partial D_{n+2} \hookrightarrow \partial D_{n+2} \rightarrow D_{n+1} \]

where $u$ is any ``unit'' morphism (i.e. a common retraction of \(\sigma,\tau\colon D_{k+1} \rightarrow D_{k+2}\)) provided by the contractibility of $\C$.
\end{proof}

\begin{prop} \label{prop:lift_hcontractible}  Let $p \colon A \rightarrow B$ be a fibration between cylinder categories. Consider a lifting problem of the form:
\[
\begin{tikzcd}
 F_*  \ar[r,"x"] \ar[d,hook] & A \ar[d,"p"] \\
 \C^{n+1}_h \ar[r] & B
\end{tikzcd}
\]
such that the object $x$ is $n$-co-truncated. We also assume that
\begin{center} $(*)$ the objects $x$ and $p(x)$ are both h-terminal, as defined in \ref{constr:Hslice} (or  \cite[definition $4.4.3$]{Hen}). \end{center}
Then the square admits a diagonal lift.
\end{prop}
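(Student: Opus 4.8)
The plan is to construct the diagonal lift as a morphism of pre-cylinder categories $L\colon \C^{n+1}_h \to A$ with $L|_{F_*} = x$ and $p\circ L$ equal to the given bottom map $g\colon \C^{n+1}_h \to B$, by building $L$ one layer at a time along the cellular presentation of $\C^{n+1}_h$ from Construction \ref{cstr:groupoid_pre-cyl}. After substituting the level $n+1$, this presentation adds to $F_*$ the cells $\partial D_{k+1} \hookrightarrow D_{k+1}$ for $0 \le k \le n$, the coherence operations $f_i$ coming from $\C$, and finally forces the maps $s\colon D_k \to D_{k+1}$ ($k \le n$) and $i_0\colon D_{n+1} \to \partial D_{n+2}$ to be weak equivalences. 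I will treat the forcing of $i_0$ separately and lift everything else first: write $\C^{n+1}_{h,0}$ for the pre-cylinder category obtained exactly as $\C^{n+1}_h$ but \emph{without} forcing $i_0$ to be an equivalence, so that $F_* \hookrightarrow \C^{n+1}_h$ factors as $F_* \hookrightarrow \C^{n+1}_{h,0} \hookrightarrow \C^{n+1}_h$.

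To lift $F_* \hookrightarrow \C^{n+1}_{h,0}$ I separate the globular skeleton from the coherence operations. The cells $D_1,\dots,D_{n+1}$ together with their source/target/projection structure (all the maps $D_k \to D_{k+1}$, $k\le n$, being forced equivalences) are precisely the iterated relative cylinder objects of $D_0$; thus this part of the construction is given by the anodyne morphism $F_* = rGlob^f_0 \hookrightarrow rGlob^f_{n+1}$ of Lemma \ref{lem:morphism_from_glob}, and since $p$ is a fibration it lifts freely over $g$, producing $L$ on all cells as a compatible system of iterated relative cylinder objects of $x$ in $A$. Next come the coherence operations: each $f_i$ is an extension of a map $L(\partial D_{k+1}) \to L(P)$ along the \emph{non-trivial} cofibration $\partial D_{k+1} \hookrightarrow D_{k+1}$, where $P$ is the associated globular sum; equivalently, setting $Q = D_{k+1} \coprod_{\partial D_{k+1}} P$, it is the choice of a retraction of the cofibration $L(P) \hookrightarrow L(Q)$. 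The crucial observation is that every globular sum $P$ of height $\le n+1$ is, in $\C^{n+1}_{h,0}$, a colimit of cells glued along iterated source/target maps, all of which are forced trivial cofibrations; hence $L(P) \simeq x$ and $g(P) \simeq p(x)$, and therefore both are \emph{h-terminal} by hypothesis $(*)$ and invariance of h-terminality under equivalence. Consequently each such retraction/extension problem is solvable, and solving them compatibly over $g$ is a lifting problem against $p$; it is convenient to package all of these at once as a single lift against the fibration $A^x \to B^{p(x)}$ of homotopy slices (Construction \ref{constr:Hslice}), using that $A^x \to A$ and $B^{p(x)} \to B$ are trivial fibrations since $x$ and $p(x)$ are h-terminal. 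This yields $L\colon \C^{n+1}_{h,0} \to A$ over $g$.

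It remains to extend $L$ over the final step $\C^{n+1}_{h,0} \hookrightarrow \C^{n+1}_h$, which forces $i_0\colon D_{n+1} \to \partial D_{n+2}$ to be a weak equivalence. By the universal property of this forcing, $L$ factors through $\C^{n+1}_h$ as soon as $L(i_0)\colon L(D_{n+1}) \to L(\partial D_{n+2})$ is a weak equivalence in $A$. But $L(D_0)=x$ together with the cylinder objects $L(D_1),\dots,L(D_{n+1})$ constructed above is exactly a morphism $I\colon rGlob^f_{n+1} \to A$ as in condition $(2)$ of the proposition characterising $n$-co-truncated objects, and $L(i_0)$ is the corresponding map $I(D_{n+1}) \to I(\partial D_{n+2})$. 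Since $x$ is $n$-co-truncated and that condition does not depend on the chosen resolution, $L(i_0)$ is a weak equivalence; hence $L$ extends to the required lift $\C^{n+1}_h \to A$, with $p\circ L = g$ by construction.

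The main obstacle is the middle paragraph, i.e. the coherent treatment of the coherence operations: individually each $f_i$ is added along a non-trivial cofibration, so it is \emph{not} an anodyne step and cannot be lifted merely from $p$ being a fibration. What rescues the argument is that contractibility of $\C$ makes all the relevant globular sums h-terminal (being equivalent to $x$), which is exactly why hypothesis $(*)$ — h-terminality of \emph{both} $x$ and $p(x)$ — is required; the delicacy then lies in solving all these extension problems simultaneously and compatibly with the already-fixed map $g$, for which the homotopy-slice fibration $A^x \to B^{p(x)}$ is the appropriate bookkeeping device.
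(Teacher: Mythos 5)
Your proof is correct and follows essentially the same route as the paper's (which defers to \cite[Lemma 5.3.9]{Hen}): first lift the globular cells as iterated relative cylinder objects along an anodyne map, then lift the generating operations one by one using h-terminality of $x$ and $p(x)$ (via the globular sums being trivially cofibrant under $D_0$), and finally invoke $n$-co-truncatedness of $x$ to see that the last forced equivalence $D_{n+1}\rightarrow \partial D_{n+2}$ is sent to an equivalence, so the lift lands in morphisms out of $\C^{n+1}_h$ rather than just $\C^{n+1}_a$. Your write-up in fact supplies more detail than the paper does.
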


\begin{proof}
The proof is exactly the same as lemma 5.3.9 of \cite{Hen}, i.e. one first lifts the action of globular sets, then one lifts one by one the generating operations of $\C^{n+1}_h$. The assumption that $x$ and $p(x)$ are h-terminal allows to construct all the required operations. See also the proof of lemma 5.3.9 of \cite{Hen} for more details.

The condition that $x$ is $n$-co-truncated is used to ensure that the morphism that we constructed this way do send the last generating equivalence $D_{n+1} \rightarrow \partial D_{n+2}$ to an equivalence, i.e. it is indeed a morphism of pre-cylinder categories $\C^{n+1}_h \rightarrow A$, and not just $\C^{n+1}_a \rightarrow A$.
\end{proof}
\begin{prop} \label{prop:lift_globular_conditionel} Assuming that the canonical left semi-model structure on $n$-truncated $\Ccal$-models exists, then Proposition \ref{prop:lift_hcontractible} holds even without assuming $(*)$.
\end{prop}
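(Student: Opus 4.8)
The plan is to deduce the unconditional statement from Proposition~\ref{prop:lift_hcontractible} by reducing, via the homotopy slice construction of \ref{constr:Hslice}, to the case where the base objects are h-terminal. First I would record what the hypothesis buys us. By Proposition~\ref{prop:CanMS<=>Cylinder} together with the Quillen equivalence of Theorem~\ref{thm:Quillen_equiv_cosk_truncated}, the existence of the canonical left semi-model structure on $n$-truncated $\C$-models (i.e.\ on $\ngpd{n}$) is equivalent to $\C^{n+1}_h$ being a cylinder category. Moreover $\C^{n+1}_h$ is cofibrant, being built from $F_*$ by pushouts of the generating cofibrations of pre-cylinder categories, so it is in fact bifibrant and has cylinder objects; consequently maps out of it into a cylinder category have a well-behaved homotopy theory. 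I would also note that, since $p$ preserves cofibrations, weak equivalences and pushouts of cofibrations, it preserves the iterated relative cylinders characterizing $n$-co-truncation, so that $p(x)$ is $n$-co-truncated as soon as $x$ is.

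Next I would set up the slices. Given the square with $x\colon F_*\to A$ and $b\colon \C^{n+1}_h\to B$, form the homotopy slices $A^x$ and $B^{p(x)}$. By \ref{constr:Hslice} the projections $\pi_A\colon A^x\to A$ and $\pi_B\colon B^{p(x)}\to B$ are fibrations, the induced map $p^x\colon A^x\to B^{p(x)}$ is a fibration of cylinder categories, and the canonical objects $\tilde x\in A^x$ and $\widetilde{p(x)}\in B^{p(x)}$ lying over $x$ and $p(x)$ are h-terminal; in particular they are $n$-co-truncated, and $p^x(\tilde x)=\widetilde{p(x)}$. The point of passing to the slices is exactly that the base objects, which were not assumed h-terminal in $A$ and $B$, become h-terminal upstairs, so that condition $(*)$ of Proposition~\ref{prop:lift_hcontractible} will be met in the sliced problem.

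The crux is to produce a morphism $\bar b\colon \C^{n+1}_h\to B^{p(x)}$ lifting $b$ along $\pi_B$ and sending $D_0$ to $\widetilde{p(x)}$. I would first construct \emph{some} morphism $\bar b_0\colon \C^{n+1}_h\to B^{p(x)}$ with $\bar b_0(D_0)=\widetilde{p(x)}$ by applying Proposition~\ref{prop:lift_hcontractible} to the terminal fibration $B^{p(x)}\to \mathbf 1$, which is legitimate because $\widetilde{p(x)}$ is h-terminal and $n$-co-truncated. This $\bar b_0$ projects under $\pi_B$ to a map $b_0\colon \C^{n+1}_h\to B$ agreeing with $b$ on $F_*$ but not equal to it in general. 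I would then argue that $b$ and $b_0$ are homotopic relative to $F_*$: both equip iterated cylinders of the single object $p(x)$ with a $\C$-groupoid structure, and the contractibility of $\C$ forces any two interpretations of the added operations to agree up to higher cells, while the $n$-co-truncatedness of $p(x)$ guarantees that the forced top-dimensional equivalence $D_{n+1}\to\partial D_{n+2}$ is compatibly respected. Since $F_*\hookrightarrow \C^{n+1}_h$ is a cofibration with cofibrant domain and $B$ is fibrant, this produces a homotopy rel $F_*$, which lifted along the fibration $\pi_B$ corrects $\bar b_0$ into the desired $\bar b$ with $\pi_B\bar b=b$. This reconciliation step, where both the $n$-co-truncation hypothesis and the standing assumption (through bifibrancy of $\C^{n+1}_h$) are genuinely used, is the main obstacle; the rest is formal.

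Finally, I would apply Proposition~\ref{prop:lift_hcontractible} to the sliced lifting square consisting of $\tilde x\colon F_*\to A^x$, the fibration $p^x\colon A^x\to B^{p(x)}$ and the bottom map $\bar b$: here $\tilde x$ is $n$-co-truncated and both $\tilde x$ and $p^x(\tilde x)=\widetilde{p(x)}$ are h-terminal, so $(*)$ holds and we obtain a lift $\tilde\ell\colon \C^{n+1}_h\to A^x$. Setting $\ell=\pi_A\circ\tilde\ell$ yields a morphism $\C^{n+1}_h\to A$ with $\ell|_{F_*}=\pi_A(\tilde x)=x$ and $p\circ\ell=\pi_B\circ p^x\circ\tilde\ell=\pi_B\circ\bar b=b$, which solves the original lifting problem without assuming $(*)$.
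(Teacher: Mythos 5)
Your overall strategy --- reducing to the h-terminal case by passing to the homotopy slices $A^x$ and $B^{p(x)}$ --- is the same as the paper's, but the step you yourself flag as the crux is where the argument breaks down. To show that $b_0=\pi_B\circ\bar b_0$ is homotopic to $b$ rel $F_*$ you invoke the principle that any two morphisms $\C^{n+1}_h\to B$ agreeing on $D_0$ are homotopic. At this point of the paper that principle is only available through the lifting property itself, applied to the fibration $B^{eq}\to B\times B$, and Proposition \ref{prop:lift_hcontractible} grants that lift only when the relevant objects are h-terminal, i.e.\ when $p(x)$ is h-terminal in $B$ --- precisely the hypothesis you are trying to remove. (The homotopy uniqueness of such morphisms is established in the paper only \emph{after} this proposition, as part of the universal property of $\C^{n+1}_h$.) So your correction step is circular. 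The paper sidesteps this entirely: since $D_0$ is h-terminal in $\C^{n+1}_h$ (by contractibility of $\C$) and $\C^{n+1}_h$ is a cylinder category under the standing hypothesis, the projection $(\C^{n+1}_h)^{D_0}\to\C^{n+1}_h$ is a \emph{trivial} fibration and hence admits a section $s$ sending $D_0$ to $(D_0,D_1)$; composing $s$ with the morphism $(\C^{n+1}_h)^{D_0}\to B^{p(x)}$ induced by functoriality of the homotopy slice gives a strict lift of $b$ to $B^{p(x)}$, with no homotopy correction needed.

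A secondary error: you assert that $\tilde x$ and $\widetilde{p(x)}$ are $n$-co-truncated \emph{because} they are h-terminal. That implication is false in a general cylinder category (in spaces the point is h-terminal but not $n$-co-truncated for any finite $n$). The $n$-co-truncatedness of the sliced object must instead be deduced from that of $x$, using that equivalences and iterated cylinders in $A^x$ are computed componentwise over those of $A$. Note also that there is no canonical object of $A^x$ over $x$: one has to choose a cylinder object for $x$ lifting the chosen cylinder $(D_0,D_1)$ downstairs, which is possible because fibrations lift cylinder objects --- this is exactly how the paper phrases the choice of $x'$.
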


\begin{proof} The proof is the same as the proof of Corollary 5.3.12 of \cite{Hen}.
Assuming the existence of the canonical left semi-model structure we have shown ( in Proposition \ref{prop:CanMS<=>Cylinder}) that $\C^{n+1}_h$ is a cylinder category. In particular, one can apply the construction of the ``homotopy slice'' developed in section 4.4 of \cite{Hen} to it. Moreover, by contractibility of $\C$, the object $D_0$ in $\C^{n+1}_h$ is equivalent to the terminal object, hence it is h-terminal. In particular, the morphism:

\[ (\C^{n+1}_h)^{D_0} \rightarrow \C^{n+1}_h \]

is a trivial fibration, so it admits a section sending the object $D_0$ to the object $(D_0,D_1) \in (\C^{n+1}_h)^{D_0}$.

Now, given a square of the form:

\[
\begin{tikzcd}
 F_*  \ar[r,"x"] \ar[d,hook] & A \ar[d,"p"] \\
 \Ccal^{n+1}_h \ar[r] & B
\end{tikzcd}
\]

such that the object corresponding to $x$ is $(n-1)$ co-truncated, using functoriality of the homotopy slice construction and any choice of lift of $x$ one obtains a new diagram:

\[
\begin{tikzcd}
 F_*  \ar[rr,"x'"] \ar[d,hook] & & A^x \ar[r] \ar[d,"p'"] & A \ar[d,"p"] \\
 \C^{n+1}_h \ar[r,"s"]  & (\C^{n+1}_h)^{D_0}  \ar[r] & B^{p(x)} \ar[r] & B
\end{tikzcd}
\]

where the map $s$ is the section mentioned above, and $x'$ is any appropriately chosen lift of the cylinder object $(D_0,D_1)$ into a cylinder object for $x$, which exists since fibrations lift cylinder objects. The map $p'$ is a fibration (see \ref{constr:Hslice}, or \cite[proposition 4.4.2]{Hen}), hence one can apply Proposition \ref{prop:lift_hcontractible} to get a lift in the first square, which produces a lift of our initial square and concludes the proof.
\end{proof}
Proposition \ref{prop:lift_hcontractible}, and its strengthening in Proposition \ref{prop:lift_globular_conditionel}, produce, assuming that the canonical left semi-model structure for $n$-groupoids exists, a (homotopical) universal property for the cylinder category of $\C^{n+1}_h$. Namely, it is (homotopically) freely generated by a $n$-co-truncated object $D_0$. Indeed, given an $n$-co-truncated object $B$ in a cylinder category $X$, the lifting property as in \ref{prop:lift_hcontractible} and \ref{prop:lift_globular_conditionel} applied to $X \rightarrow 1$ gives a morphism $\C^{n+1}_h \rightarrow X$ sendind $D_0$ to $B$. Now given two such morphisms  one can apply the lifting property to the fibration $X^{eq} \rightarrow X \times X$ where $X^{eq}$ is the path object for cylinder categories constructed in 4.2.8 of \cite{Hen} to get a homotopy between these two maps. Similarly, one gets higher homotopies between homotopies by looking at lifting against iterated path objects. 

The general idea of our main result is that any other cylinder category with the same property will be equivalent. So we just need to show that any other cylinder category with the same universal property is equivalent to the category of $n$-truncated spaces to conclude the proof. One cannot directly use a Bousfield localization of the category of spaces or of simplicial sets because they are not ``cofibrant''. So, as in \cite{Hen}, we use a semi-simplicial model instead.
\begin{constr}
We start from the pre-cylinder category $\Ccal^{\Delta_+}_0$. While it is not a cylinder category, its category of models, the category of semi-simplicial sets, carries a right semi-model structure whose cofibrations are the monomorphisms (i.e. the natural cofibration of the category of models) and the fibrations  between fibrant objects are characterized by the lifting property against the trivial cofibration of $\Ccal^{\Delta_+}_0$ (see for example \cite{Hen2}).
Moreover, the forgetful functor from simplicial sets to semi-simplicial sets is both a left and a right Quillen equivalence. Along with the fact that any fibrant semi-simplicial set can be endowed with the structure of a simplicial sets (which, in particular, is a Kan complex), this allows to express almost all the homotopy theory of semi-simplicial sets in terms of simplicial sets.

This right semi-model structure admits\footnote{This can be checked directly, but it require some work with semi-simplicial sets that is outside the scope of this paper. A more detailed treatement of Bousfield localizations of weak and right semi-model categies easily implying this claim will appear in an upcoming paper.} a Bousfield localization at the morphism $\partial \Delta_+[n+2] \hookrightarrow \Delta_+[n+2]$, whose local objects are the (homotopically) $n$-truncated semi-simplicial sets. This localisation is Quillen equivalent to the localization of the category of simplicial sets at $\partial \Delta[n+2] \hookrightarrow \Delta[n+2]$, which is known to be a model for the homotopy theory of $n$-truncated spaces, i.e. homotopy \(n\)-types.

Finally, one defines the pre-cylinder category $\Ccal^{\Delta_+}_{\leqslant n}$  by starting from $\Ccal^{\Delta_+}_0$, then forcing $\partial \Delta_+[n+2] \hookrightarrow \Delta_+[n+2]$ to be an equivalence, and finally one adds a retract to each trivial cofibration. In fact, it is enough to add a retract to each of the generating trivial cofibrations of the localized right semi-model structure mentioned above.

The category of models of $\Ccal^{\Delta_+}_{\leqslant n}$ is the category of algebraically fibrant objects of the right semi-model structure on semi-simplicial sets modeling $n$-truncated spaces. In particular one can show, applying the argument of \cite{Nik}, that it carries a model structure transported from the model structure on semi-simplicial sets (a treatment of the model structure on fibrant objects on a weak and right semi-model categories will appear in a paper in preparation by John Bourke and the first named author). Lemma \ref{lem:cylinder_from_MS} allows to show that $\Ccal^{\Delta_+}_{\leqslant n}$ is a cylinder category.

It is also clear that $\Ccal^{\Delta_+}_{\leqslant n}$ satisfies the lifting property of proposition \ref{prop:lift_hcontractible} (without assuming the condition $(*)$). Indeed, given such lifting problem, one can first lift $\Ccal^{\Delta_+}_0$, then the assumption that the object above is $n$-cotruncated implies that the lift will send $\partial \Delta[n+2] \hookrightarrow \Delta[n+2]$ to an equivalence, and then the lifting property of fibrations allows to construct the lift of the retracts of trivial cofibrations.
\end{constr}
\begin{thm}
	\label{main thm}
Assuming the canonical left semi-model structure on $(n+1)$-coskeletal groupoid exists, then any left Quillen functor from it to the model category of spaces localized at $\partial D_{n+2} \rightarrow D_{n+2}$ that sends $D_0$ to the point is a Quillen equivalence. Moreover such Quillen functor exists.
\end{thm}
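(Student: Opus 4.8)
The plan is to exhibit $\C^{n+1}_h$ and $\Ccal^{\Delta_+}_{\leqslant n}$ as two cylinder categories sharing the same homotopical universal property --- being ``freely generated by an $n$-co-truncated object'' --- and to conclude that they are homotopy equivalent, so that the last clause of Theorem \ref{thm:model_str_on_category_of_models} yields a Quillen equivalence between their categories of models. Under the standing hypothesis, Proposition \ref{prop:CanMS<=>Cylinder} makes $\C^{n+1}_h$ into a cylinder category whose category of models $\Mod(\C^{n+1}_h)$ is the canonical left semi-model structure on $(n+1)$-coskeletal groupoids; on the other side, $\Ccal^{\Delta_+}_{\leqslant n}$ is a cylinder category whose category of models is a model category Quillen equivalent to spaces localized at $\partial D_{n+2}\to D_{n+2}$. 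Note that the same hypothesis, via Theorem \ref{thm:Quillen_equiv_cosk_truncated}, supplies the existence of the canonical structure on $n$-truncated models needed to invoke Proposition \ref{prop:lift_globular_conditionel}.

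First I would check that the generating objects $D_0\in\C^{n+1}_h$ and $\Delta_+[0]\in\Ccal^{\Delta_+}_{\leqslant n}$ are $n$-co-truncated in their respective cylinder categories. For $\C^{n+1}_h$ this is immediate from Construction \ref{cstr:groupoid_pre-cyl}: the last forced equivalence $i_0\colon D_{n+1}\to\partial D_{n+2}$ is exactly condition $(2)$ of $n$-co-truncation applied to $D_0$ via its iterated cylinder objects $D_0,\dots,D_{n+1}$. For $\Delta_+[0]$ it follows from the fact that $\partial\Delta_+[n+2]\hookrightarrow\Delta_+[n+2]$ was forced to be an equivalence when passing from $\Ccal^{\Delta_+}_0$. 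I would then apply the universal lifting property --- Proposition \ref{prop:lift_globular_conditionel} with domain $\C^{n+1}_h$, and the analogous property verified in the construction of $\Ccal^{\Delta_+}_{\leqslant n}$ --- to the fibrations $\Ccal^{\Delta_+}_{\leqslant n}\to 1$ and $\C^{n+1}_h\to 1$. This produces morphisms of cylinder categories $u\colon\C^{n+1}_h\to\Ccal^{\Delta_+}_{\leqslant n}$ with $u(D_0)=\Delta_+[0]$ and $v\colon\Ccal^{\Delta_+}_{\leqslant n}\to\C^{n+1}_h$ with $v(\Delta_+[0])=D_0$.

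Next I would show that the composites $vu$ and $uv$, which fix the generating objects, are homotopic to the identities, by invoking the essential-uniqueness part of the universal property. Concretely, to obtain $vu\simeq\mathrm{Id}$ I would lift the pair $(\mathrm{Id},vu)\colon\C^{n+1}_h\to\C^{n+1}_h\times\C^{n+1}_h$ against the path-object fibration $(\C^{n+1}_h)^{eq}\to\C^{n+1}_h\times\C^{n+1}_h$ of Construction \ref{constr:C^eq}, using as lifting datum the cylinder triple $(D_0,D_0,D_1)$ lying over $(D_0,D_0)$; this triple is again $n$-co-truncated because the two projections $(\C^{n+1}_h)^{eq}\to\C^{n+1}_h$ are trivial fibrations and $D_0$ is $n$-co-truncated. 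Iterating this with the higher path objects supplies the higher coherences. Hence $u$ is a homotopy equivalence of cylinder categories, and by Theorem \ref{thm:model_str_on_category_of_models} the induced adjunction $u_!\dashv u^*$ is a Quillen equivalence $\Mod(\C^{n+1}_h)\simeq\Mod(\Ccal^{\Delta_+}_{\leqslant n})$ sending $D_0$ to $\Delta_+[0]$. Composing $u_!$ with a Quillen equivalence $w$ from $\Mod(\Ccal^{\Delta_+}_{\leqslant n})$ to spaces localized at $\partial D_{n+2}\to D_{n+2}$ (sending $\Delta_+[0]$ to the point) proves the ``moreover'' clause: such a Quillen functor exists.

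Finally, for the assertion that \emph{any} left Quillen functor $G$ sending $D_0$ to the point is a Quillen equivalence, I would reduce (by transporting along $w$, whose target may be taken to have all objects fibrant) to the case of a target of the form $\Mod(\Ccal^{\Delta_+}_{\leqslant n})$, and restrict $G$ along the Yoneda embedding to a morphism of cylinder categories $\C^{n+1}_h\to\mathrm{Cof}$ of that target sending $D_0$ to the $n$-co-truncated point; such a restriction is legitimate since $G$ preserves cofibrations, the initial object, pushouts of cofibrations, and (by Ken Brown) weak equivalences between cofibrant objects. By the essential uniqueness established above, this restriction is homotopic to $w\circ u$, so $\mathbb{L}G$ is naturally isomorphic to the derived functor of the equivalence just built, whence $G$ is itself a Quillen equivalence. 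The main obstacle I anticipate is the homotopy-equivalence step: producing the homotopies and higher homotopies exhibiting $u$ as an equivalence requires keeping the $n$-co-truncation condition under control along the path-object (and homotopy-slice) constructions, rather than any single explicit computation.
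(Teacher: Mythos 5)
Your core argument --- exhibiting $\C^{n+1}_h$ and $\Ccal^{\Delta_+}_{\leqslant n}$ as cylinder categories freely generated by an $n$-co-truncated object, producing morphisms in both directions from the lifting property of Propositions \ref{prop:lift_hcontractible}/\ref{prop:lift_globular_conditionel}, and showing the composites are homotopic to identities by lifting $(D_0,D_0,D_1)$ against the path-object fibration $(\C^{n+1}_h)^{eq}\rightarrow \C^{n+1}_h\times\C^{n+1}_h$ --- is exactly the paper's proof of the comparison between the two cylinder categories, and that part is fine.

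The gap is in how you connect this to the actual target of the theorem, the model category of spaces localized at $\partial D_{n+2}\rightarrow D_{n+2}$. In that localized structure not every object is fibrant (the fibrant objects are the $n$-truncated spaces), so $\mathrm{Cof}$ of it is \emph{not} a cylinder category: for instance $\partial D_{n+2}\hookrightarrow D_{n+2}$ is a trivial cofibration between cofibrant objects admitting no retraction. Consequently you cannot ``restrict $G$ along the Yoneda embedding to a morphism of cylinder categories'' into that target and then invoke the path-object uniqueness argument, nor can you obtain the existence clause by a lifting property into it. Your proposed fix --- ``transporting along $w$'' to replace the target by $\Mod(\Ccal^{\Delta_+}_{\leqslant n})$ --- does not work as stated, because a Quillen equivalence is not invertible as a Quillen functor, so a given left Quillen $G$ into localized spaces cannot simply be rewritten as a left Quillen functor into $\Mod(\Ccal^{\Delta_+}_{\leqslant n})$; likewise your existence argument needs a left Quillen functor $w$ out of $\Mod(\Ccal^{\Delta_+}_{\leqslant n})$ into localized spaces, and the natural comparison (forgetting the algebraic structure) goes the wrong way. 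The paper circumvents both problems: for existence it builds the morphism at the level of $\C^{n+1}_a$ into the \emph{unlocalized} category of spaces (where every object is fibrant) and then checks compatibility with equivalences after localizing; for uniqueness it restricts an arbitrary $G$ along $\Ccal^{\Delta_+}_0\rightarrow\C^{n+1}_h$ to a co-semi-simplicial resolution of the point and uses the classical fact that all such resolutions induce equivalences between (localized) semi-simplicial sets and (localized) spaces, concluding by $2$-out-of-$3$ for Quillen equivalences. You need one of these devices (or an equivalent one) to close the argument.
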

\begin{proof}
One first shows that the cylinder categories $\Ccal^{\Delta_+}_{\leqslant n}$ and $\C^{n+1}_h$ are equivalent.

Their respective lifting properties imply that we have morphisms in both directions:

\[ \Ccal^{\Delta_+}_{\leqslant n} \leftrightarrows \Ccal^{n+1}_h \]

under which $D_0$ is sent to (the free object on) $\Delta_+[0]$ and viceversa. This is the case because both $D_0$ and $\Delta_+[0]$ are $n$-co-truncated. In particular, the composite gives a morphism  $f: \C^{ n+1}_h \rightarrow  \C^{n+1}_h$ sending $D_0$ to $D_0$, so that one can construct, using Proposition \ref{prop:lift_hcontractible} a diagonal lift for the square:

\[
\begin{tikzcd}
 F_* \ar[r,"D_1"] \ar[d] &   (\C^{n+1}_h)^{eq} \ar[d] \\
 \C^{n+1}_h \ar[r,"(f;Id)" swap] \ar[ur,dotted,"h"] &  \C^{n+1}_h \times  \C^{n+1}_h
\end{tikzcd}
\]

Here,  $(\Ccal^{\leqslant n+1}_f)^{eq}$ is the path object presented in \ref{constr:C^eq}, and the top arrow (denoted by $D_1$) corresponds to the object $(D_0,D_0,D_1) \in (\C^{n+1}_h)^{eq} $ with the natural cofibration $D_0 \coprod D_0 \hookrightarrow D_1$. A lift in this square shows that $f$ is homotopic to the identity (for instance, its action on the homotopy category is isomorphic to the identity). Since the same is true for $\Ccal^{\Delta_+}_{\leqslant n}$, this shows that any two morphisms as above are equivalences, and so they induce Quillen equivalences between the categories of models.
In particular, any co-semi-simplicial resolution of $D_0$ in $\C^{n+1}_h$, corresponding to a morphism $\Ccal^{\Delta_+}_0 \rightarrow \C^{n+1}_h$, induces an equivalence between the model category of coskeletal $(n+1)$-groupoids constructed in \ref{thm:Quillen_equiv_cosk_truncated} and the localization of the right semi-model structure of semi-simplicial sets at $\partial \Delta[n+2] \hookrightarrow \Delta[n+2]$.

One can explicitely construct a Quillen functor from  $\Ccal^{n+1}_h$ to the model category of spaces localized at $\partial D_{n+1} \hookrightarrow D_{n+1}$ by sending $D_0$ to $D_0$. In this case, the right adjoint corresponds to the foundamental $n$-groupoid construction. It does not quite follows from the lifting property of proposition \ref{prop:lift_hcontractible} as not every object is fibrant in the localized category of spaces, but the construction in the proof of proposition \ref{prop:lift_hcontractible} still produces a morphisms $\C^{n+1}_a \rightarrow Spaces$ and one can justs check that once the category of spaces is localized at $\partial D_{n+1} \hookrightarrow D_{n+1}$ this morphism is compatible with equivalences.

Any composite:
\[ \Ccal^{\Delta_+}_{0} \rightarrow \C^{n+1}_h \rightarrow Spaces \]
will send the representables to a co-semi-simplicial resolution of the point. All such resolutions are equivalent and induce equivalences between the category of semi-simplicial sets and the category of spaces, and all specialize to equivalences between the localization of semi-simplicial sets at $\partial \Delta[n+2] \hookrightarrow \Delta[n+2]$ and the localization of spaces at $\partial D_{n+2} \rightarrow D_{n+2}$. By the $2$-out-of-$3$ property for Quillen equivalences, this concludes the proof.
\end{proof}

\end{document}